\theoremstyle{plain}
\newtheorem{thm}{Theorem}[section]
\newtheorem{lem}[thm]{Lemma}
\newtheorem{prop}[thm]{Proposition}
\newtheorem{cor}[thm]{Corollary}
\theoremstyle{definition}
\newtheorem{defi}[thm]{Definition}
\newtheorem{exam}[thm]{Example}
\newtheorem{rem}[thm]{Remark}
\newtheorem{warn}[thm]{Warning}
\newcommand{\R}{\mathbb R}
\newcommand{\Z}{\mathbb Z}
\newcommand{\nn}{\vskip 0.2cm}
\newcommand{\n}{\vskip 0.1cm}
\renewcommand{\geq}{\geqslant}
\renewcommand{\leq}{\leqslant}
\renewcommand{\@secnumfont}{\bfseries}
\def\section{%
  \@startsection{section}{1}
    {\z@}
    {2.0ex plus 0.8ex minus .1ex}
    {1.0ex plus .2ex}
    {\bfseries\large\centering\MakeUppercase}%
}
\begin{document}
\title{Weighted homology theory of orbifolds 
and Weighted Polyhedra}
\author{*Yin Wei, $^\dagger$Lisu Wu, and $^\ddagger$Li Yu}
\address{*School of Mathematics, Nanjing University\\ Nanjing\\210093\\ P.R.China}
\email{2986343993@qq.com}
\address{$^\dagger$College of Mathematics and Systems Science, Shandong University of 
Science and Technology, Tsingtao, 266590, P.R.China}
\email{wulisu@sdust.edu.cn} 
\address{$^\ddagger$School of Mathematics, Nanjing University\\ Nanjing\\210093\\ P.R.China}
\email{yuli@nju.edu.cn}

\keywords{orbifold, weighted homology, weighted polyhedron, cup product, Poincar\'e duality}

\date{\today}

\thanks{2020 \textit{Mathematics Subject Classification}. 55N32 (Primary), 57S17, 58K30.
}

\begin{abstract}
  We introduce two new homology theories of orbifolds   from some special type of triangulations adapted
  to an orbifold, called AW-homology and DW-homology. 
  The main idea in the definitions of these two homology theories is that we use divisibly weighted simplices as the building blocks
  of an orbifold and encode the orders of the local groups of the orbifold in the boundary maps of their chain complexes so that these
  two theories can reflect some structural information of the singular set of the orbifold. We
  prove that AW-homology and DW-homology groups are invariants of compact orbifolds under orbifold isomorphisms and more generally under certain type of homotopy equivalences of orbifolds. 
   Moreover, we find that there exists a natural graded commutative product in the cohomology groups corresponding to the DW-homology, which generalizes the cup product in the ordinary simplicial cohomology. In addition, we introduce a broader class of objects called weighted polyhedra and develop our AW-homology and DW-homology theory in this broader setting. When a weighted polyhedron is based on
     a compact orientable homology $n$-manifold, we prove that its AW-homology and DW-homology satisfy a generalized version of Poincar\'e duality with respect to its
     DW-cohomology and AW-cohomology, respectively.
     Our goal is to generalize the whole simplicial (co)homology theory to any triangulable topological space with a suitable weight function.
\end{abstract}

\maketitle

\section{Introduction}

 Orbifolds were first introduced by I.~Satake in~\cite{Sa56} where they were originally called $V$-manifolds.
 Roughly speaking, an $n$-dimensional orbifold is a topological space locally modeled on the quotient spaces of open subsets in the Euclidean space $\mathbb{R}^n$ by some finite group actions.
 Orbifolds have arisen naturally
in many ways in mathematics. For example, the orbit space of any proper action by a
discrete group on a manifold has the structure of an orbifold,
this applies in particular to moduli spaces (see Thurston~\cite{Thurs77} and Scott~\cite{Scott83} for more examples).\n

 Homology
and cohomology theories of orbifolds have been defined
and studied in many different ways.  For example, equivariant cohomology of global quotients, \v{C}ech and de Rham cohomology in~\cite{Sa56}, simplicial cohomology in Moerdijk and Pronk~\cite{MoePro99},
Chen-Ruan cohomology in~\cite{ChenRuan04}, t-singular homology in Takeuchi and Yokoyama~\cite{TakYok06}, ws-singular cohomology in Takeuchi and Yokoyama~\cite{TakYok07}, string homology in Lupercio, Uribe and Xicotencatl~\cite{LupUriXic08,Yas18} 
and so on. Note that many theories only study  orbifolds that have a manifold covering. \n

 Here we introduce two new homology theories for any compact orbifold called AW-homology and DW-homology, and moreover study the cohomology theories associated to these two homology theories.
The definitions of the AW-homology and DW-homology are based on an important fact that any orbifold admits an appropriate triangulation with respect to the natural weight function induced from the orbifold structure.
Then inspired by the definition of weighted simplicial homology from Dawson~\cite{Daw90}, we encode the orders of the local groups of an orbifold in the boundary maps of the AW-homology and DW-homology so that these two homology theories may reflect the structural information of the singular set of the orbifold. Moreover, we observe that the
weighted simplicial homology in~\cite{Daw90} is invariant under barycentric subdivisions of a weighted simplicial complex with respect to some natural rule. This makes us believe that AW-homology and DW-homology are invariants of orbifolds and worthy of study.
\n

Another key fact supporting the two new homology theories is that the weighted simplicial chain complexes of a divisibly weighted simplex and its barycentric subdivision are both acyclic. So divisibly weighted simplices play the analogous role in weighted simplicial homology as the ordinary simplices do in the ordinary simplicial homology.  
This analogy allows us to prove 
a generalized version of the \emph{simplicial approximation theorem} for (weight-preserving)
morphisms in the category of divisibly weighted simplicial complexes using the algebraic method of acyclic carrier (see Munkres~\cite[Section\,13]{Munk84}). Based on this, we can prove the invariance of AW-homology and DW-homology
under isomorphisms of orbifolds and more generally under certain type of homotopy equivalences of orbifolds.
  \n

 Moreover, the definitions of AW-homology and DW-homology can be extended to a much wider class of objects called \emph{weighted polyhedra}. Roughly speaking, a weighted polyhedron is a topological space $X$ with a weight at each point, and $X$ admits a
 triangulation consisting of (divisibly) weighted simplices such that the weight of $X$ at any point $x$ agrees with
 the weight of the \emph{carrier} of $x$ (i.e. the unique simplex containing $x$ in its relative interior).
 We will develop the whole theory of
 AW-homology and DW-homology in the category
 of weighted polyhedra. Our work can be considered as a natural generalization of the ordinary simplicial homology theory. 
 
  \n

  The paper is organized as follows. In Section~\ref{Sec:Preliminaries}, we will review some basic definitions and constructions in the theories of orbifolds and weighted simplicial homology. In addition, we will introduce a new notion called descending weighted simplicial complex and compare it with the usual weighted simplicial complex. In Section~\ref{Sec:Homology-Theory}, we introduce the definitions of AW-homology and DW-homology of a compact orbifold.
  In Section~\ref{Sec:Pseudo-Orbi}, we define the notion of weighted polyhedron and extend the definitions
  of AW-homology and DW-homology to all weighted polyhedra.  In Section~\ref{Sec:Div-Wei-Simplex}, we prove some basic properties of a divisibly weighted simplex.
  In Section~\ref{Sec:Invariance}, we prove that AW-homology and DW-homology groups are invariants of  weighted polyhedra under isomorphisms and more generally under certain type of homotopy equivalences of weighted polyhedra.
  In Section~\ref{Sec:Relation}, we prove some basic relations between AW-homology, DW-homology and
  the ordinary simplicial homology under some special coefficients. 
  It turns out that the free abelian part of AW-homology and DW-homology of a weighted polyhedron is isomorphic to that of the ordinary simplicial homology. So it is the torsion part of these two homology theories
 that can really give us some new information of a weighted polyhedron.
  In Section~\ref{Sec:Example}, we compute
  AW-homology and DW-homology of some concrete examples such as circles, disks and compact surfaces with isolated singular points. In Section~\ref{Sec:Product-Cohomology}, we construct a product on the DW-cohomology groups
  with integral coefficients and prove that it is graded commutative (see Theorem~\ref{Thm:Graded-Comm-Cup}). This construction generalizes the cup product in the ordinary simplicial cohomology.
  In addition, we can define a weighted cap product between DW-homology classes and DW-cohomology
  classes, which generalizes the ordinary cap product. But we remark that there is no parallel product structures in AW-cohomology. In Section~\ref{Sec:Poincare-Duality}, we study some relations between AW-(co)homology and DW-(co)homology groups.
  We find that
  when a weighted polyhedron is based on a compact orientable homology $n$-manifold,  
 its AW-(co)homology and DW-(co)homology will satisfy
 some duality relations. These relations generalize the classical Poincar\'e duality between the homology and cohomology groups of a compact orientable manifold. This is another reason why we insist on studying AW-homology and DW-homology at the same time.\n
 
  Our future direction of study is to build the theory of 
  weighted singular homology for weighted polyhedra
  that corresponds to AW-homology or DW-homology.
 Another direction is to compute the AW-homology and DW-homology of some higher dimensional orbifolds that arises naturally in mathematics. In addition,
   we want to know whether the AW-homology of an orbifold is isomorphic to the t-singular homology introduced in~\cite{TakYok06}.
  From their definitions, it is hard to see any relation between the two theories.
  But the calculation in Example~\ref{Exam:surface-sing} shows that for a compact $2$-orbifold with only isolated singular points, the AW-homology group and $t$-singular homology
 actually agree (see Remark~\ref{Rem:Agree}).   
 \n

  \section{Preliminaries} \label{Sec:Preliminaries}
  
  \subsection{Definition of orbifolds}
  \ \n  
  
   We first briefly review some basic definitions concerning orbifolds. The reader is referred to~\cite{Sa56,Sa57,AdemLeiRuan07,Choi12} for more detailed discussion of these definitions and some other notions 
  in the study of orbifolds.\n
  
   Let $M$ be a paracompact Hausdorff space.      
   \begin{itemize}
 \item An \emph{orbifold chart} on $M$ is given by a connected open subset $\tilde{U}$ of $\R^n$ for some integer $n\geq 0$, a finite group $G$ acting smoothly and effectively on $\tilde{U}$, and a map $\varphi: \tilde{U}\rightarrow M$, such that $\varphi$ is $G$-invariant ($\varphi\circ g=\varphi$ for all $g\in G$) and induces a homeomorphism from $\tilde{U}\slash G$ onto an
 open subset $U=\varphi(\tilde{U})$ of $M$.\n 
 
 \item An \emph{embedding} $\lambda: (\tilde{U},G,\varphi)\hookrightarrow (\tilde{V},H,\psi)$ between two orbifold charts on $M$ is a smooth embedding $\lambda: \tilde{U}\hookrightarrow \tilde{V}$ with $\psi\circ\lambda = \varphi$. \n

 \item An \emph{orbifold atlas} on $M$ is a family 
 $\mathcal{U}=\{ (\tilde{U},G,\varphi) \}$ of such charts, which cover $M$ and are locally compatible in the following sense: given any two orbifolds charts $(\tilde{U},G,\varphi)$ for $U=\varphi(\tilde{U})\subseteq M$ and 
 $(\tilde{V}, H, \psi)$ for $V=\psi(\tilde{V})\subseteq M$, and any point $x\in U\cap V$, there exists an open neighborhood $W\subseteq U\cap V$ of $x$ and a chart
 $(\tilde{W}, K,\chi)$ with $W=\chi(\tilde{W})$ such that there are
 embeddings $(\tilde{W}, K,\chi)\hookrightarrow
 (\tilde{U},G,\varphi)$ and $(\tilde{W}, K,\chi)\hookrightarrow (\tilde{V}, H, \psi)$. Since $M$ is paracompact, we can choose an orbifold atlas $\mathcal{U}=\{ (\tilde{U},G,\varphi) \}$ on $M$
 such that $\{\varphi(\tilde{U})\}$ is a 
 locally finite open cover of $M$.\n

 \item An atlas $\mathcal{U}$ on $M$ is said to \emph{refine} another
 atlas $\mathcal{V}$ if for every chart in $\mathcal{U}$
 there exists an embedding into some chart of $\mathcal{V}$.\n
 
 \item Two orbifold atlases on $M$ are said to be 
 \emph{equivalent} if they have a common refinement. 
 An \emph{orbifold} (of dimension $n$) is such a space $M$ with an equivalence class of atlases $\mathcal{U}$.\n
 \end{itemize}
 
  We will generally write $\mathcal{M} = (M,\mathcal{U})$ for the orbifold represented by the space $M$ and a chosen atlas $\mathcal{U}$ and say that $\mathcal{M}$ is \emph{based on} $M$. In addition, we call $\mathcal{M}$ \emph{compact} if $M$ is compact. 
   By the fact that a smooth action is locally smooth (see Bredon~\cite[p.\,308]{Bredon72}), any
orbifold of dimension $n$ has an atlas consisting of ``linear'' charts, i.e. charts of
the form $(\R^n, G,\varphi)$ where the finite group $G$ acts on $\R^n$ via orthogonal linear transformations.
\n 
   Let $\mathcal{M} = (M,\mathcal{U})$ be an orbifold of dimension $n$ where $\mathcal{U}$ is an atlas consisting of linear charts. For each point $x \in M$, choose a linear chart $(\R^n, G,\varphi)$ around $x$, with $G$ a finite subgroup of the orthogonal
group $\mathrm{O}(n,\R)$. Let $\tilde{x}$ be a point
with $\varphi(\tilde{x})=x$, and $G_x=\{ g\in G\,|\, g\cdot \tilde{x} = \tilde{x}\}$ the \emph{isotropy subgroup} 
at $\tilde{x}$. Up to conjugation, $G_x$ is a well defined
subgroup of $\mathrm{O}(n,\R)$, called the \emph{local group} at $x$. \n

 \begin{itemize}
  \item The order $|G_x|$ of $G_x$ is independent on the local chart around $x$.\n
  
  \item The set 
  $\{ x \in M\, | \, G_x \neq 1 \}$ is called the \emph{singular set} of $M$, denoted by $\Sigma M$.
  A point in $\Sigma M$ is called a \emph{singular point}, and a point in $M\backslash \Sigma M$ is called
  a \emph{regular point}.
 \n
  
  \item The space $M$ carries a \emph{natural
stratification} whose strata are the connected components of the sets
 $\Sigma_H (M) := \{ x\in M\,|\, (G_x)=(H)\}$,
  where $H$ is any finite subgroup of $\mathrm{O}(n,\R)$
  and $(H)$ is its conjugacy class.
    \end{itemize}
    \n
    
 \begin{exam}[Good Orbifold] \label{Exam:Good-Orbifold}
   If a discrete group $\Gamma$ acts smoothly and properly discontinuously on a manifold $X$, then the orbit space $X\slash \Gamma$ has a natural orbifold structure where the local group of an orbit $\overline{x}=\Gamma x$, $x\in X$, is isomorphic to the isotropy group $\Gamma_x$ of $x$.
Such an orbifold $X\slash \Gamma$ is usually called \emph{good} (or \emph{developable}). An orbifold is called \emph{bad} if it is not good. 
\end{exam}

\begin{exam}[Effective Quotient] \label{Exam:Eff-Quot}
  If a compact Lie group $G$ acts smoothly, effectively, and almost freely on a manifold $X$, then the orbit space
$X\slash G$ has a natural orbifold structure due to 
the ``slice theorem'' (see~\cite[Ch.~II]{Bredon72}). 
The orbifold $X\slash G$ is also referred to as an \emph{effective quotient} (see~\cite[Definition 1.7]{AdemLeiRuan07}).
 \end{exam}
 
 \begin{defi}[Isomorphism of Orbifolds] \label{Def:orbi-isom}
  Two orbifolds $\mathcal{M} = (M,\mathcal{U})$ and 
  $\mathcal{N} = (N,\mathcal{V})$ are called \emph{isomorphic}
  if there exists a homeomorphism $f: M\rightarrow N$ such that:
  for any $x\in M$, there are orbifold charts $(\tilde{U},G,\varphi)$ around $x$ and $(\tilde{V},H,\psi)$ around $y=f(x)$
  where $f$ maps $U=\varphi(\tilde{U})$ onto $V=\psi(\tilde{V})$ and can be lifted to an equivariant homeomorphism 
  $\tilde{f}:\tilde{U}\rightarrow \tilde{V}$, i.e. there exists 
  a group isomorphism $\rho: G\rightarrow H$ so that
  $\tilde{f}(g\cdot z) = \rho(g)\cdot \tilde{f}(z)$ for all
  $z\in \tilde{U}$ and $g\in G$. Such a homeomorphism $f$ is called an \emph{isomorphism} from $\mathcal{M}$ to $\mathcal{N}$, denoted by $f: \mathcal{M}\rightarrow \mathcal{N}$.
 \end{defi} 
  
   \begin{defi}\label{Def:Product-Orbi}
    The \emph{product of two orbifolds} $\mathcal{M}=(M,\mathcal{U})$ and $\mathcal{N}=(N,\mathcal{V})$, denoted by $\mathcal{M}\times \mathcal{N} = (M\times N,\mathcal{U}\times \mathcal{V})$, is an orbifold whose underlying space is $M\times N$ and atlas given by
    $\mathcal{U}\times \mathcal{V}=\{ (\tilde{U}\times\tilde{V},G\times H,\varphi\times\psi) \}$ where
$\mathcal{U}=\{ (\tilde{U},G,\varphi) \}$ and
$\mathcal{V}=\{ (\tilde{V},H,\psi) \}$.   
   \end{defi}
  
   Note that the projection $p: M\times N\rightarrow M$, $(x,y)\mapsto x$ is not a weight-preserving map unless $\mathcal{N}$ has no singular points.  \n

\subsection{Homology of weighted simplicial complexes}
\label{Subsec:Weigh-Simp-Hom}
 \ \n
 
  Weighted simplicial complexes and weighted homology groups were first studied by 
  Dawson in~\cite{Daw90}, which were used to construct nonstandard homology theories for categories of a combinatorial nature, such as preconvexity spaces. In the recent years, many interesting applications of weighted simplicial homology are found in computational topology and topological data analysis; see Ren, Wu and Wu~\cite{RenWuWu18, RenWuWu21},
  Wu, Ren, Wu and Xia~\cite{WuRenWuXia20} and
Baccini, Geraci and Bianconi~\cite{BGB22}.\n
  
  If not specified otherwise, the coefficients of homology groups in our following discussions are 
 always assumed to be integers $\Z$.\n
 
 A (positively) \emph{weighted simplicial complex}
 is a pair $(K, w)$ where $K$ is a simplicial complex and  $w$ is a function assigning a positive integer to each simplex of $K$, which satisfy: for any simplices $\sigma, \sigma'$ in $K$,
  \[ \sigma'\ \text{is a face of}\ \sigma \Rightarrow w(\sigma')\, |\, w(\sigma).  \]
  The function $w$ is called a \emph{weight} on $K$.
  In the following, we also use the vertex set of a simplex $\sigma$ to denote the simplex.
  \n

  For each $n\geq 0$, let $C_n(K)$ denote the free abelian group generated by all the oriented $n$-simplices of $K$.
  By abuse of notation, we will use the symbol $\sigma$ to denote a simplex or an oriented simplex in different occasions.
  Then the weight function $w$ determines a
  boundary operator $\partial^w$ on $C_*(K)$ by:
  \begin{equation} \label{Equ:weighted-Boun-AW}
  \partial^{w}: C_n(K)\rightarrow C_{n-1}(K), \ \  \sigma \mapsto \sum^n_{j=0} \frac{w(\sigma)}{w(\partial_j(\sigma))} (-1)^j \partial_j \sigma 
   \end{equation}
  where the face operators $\partial_j$ are defined as in the standard simplicial homology, i.e.
 for an oriented simplex $\sigma=[v_0,\cdots,v_n]$, $\partial_j\sigma=[v_0,\cdots,\widehat{v}_j,\cdots,v_n]$. It is easy to check that $$\partial^w\circ\partial^w=0.$$ 
  Then $(C_*(K), \partial^w)$ is called the \emph{weighted simplicial chain complex} of $(K,w)$, whose
    homology group is called the \emph{weighted simplicial homology} of $(K,w)$, denoted by $H_*(K,\partial^w)$.
 \n
 
Moreover, for a simplicial subcomplex $A$ of $K$, 
the map $\partial^w$ induces a boundary 
map $\overline{\partial}^w$ on the relative chain complex
$C_*(K,A)=C_*(K)\slash C_*(A)$. Then similarly, we can define the \emph{relative weighted simplicial homology} 
$H_*(K,A, \overline{\partial}^w)$ as
\[ H_*(K,A, \overline{\partial}^w):= H_*(C_*(K,A),\overline{\partial}^w). \] \n

 Given two weighted simplicial complexes $(K,w)$ and $(K',w')$, a simplicial map $\varrho: K\rightarrow K'$ is called a \emph{morphism} from $(K,w)$ to $(K',w')$
   if
   \begin{equation} \label{Equ:Mor-Ascend}
   w'(\varrho(\sigma)) \mid w(\sigma), \ \text{for every
   simplex $\sigma$ of $K$}.
   \end{equation} 
   We also denote such a map by
   $\varrho: (K,w)\rightarrow (K',w')$.
    In particular, $\varrho$ is called \emph{weight-preserving} if $w'(\varrho(\sigma))=w(\sigma)$ for any simplex
   $\sigma$ in $K$.
      It is easy to check that a morphism $\varrho$ induces a chain map 
   \[ \varrho_{\#}:
   (C_*(K),\partial^w)\rightarrow (C_*(K'), \partial^{w'}),\ \  \varrho_{\#}(\sigma) = \frac{w(\sigma)}{w'(\varrho(\sigma))} \varrho(\sigma), \ \forall\sigma\in K, \]
  Then $\varrho$ further induces a homomorphism on the weighted homology, denoted by 
  $$ \varrho_*:  H_*(K,\partial^w)\rightarrow H_*(K',\partial^{w'}).$$
   The reader is referred to~\cite{Daw90} for more discussion of
 the categorical properties of weighted simplicial complexes and weighted homology groups.\n

 \subsection{Descending weighted simplicial complexes}
\label{Subsec:Descend-Weigh-Simp-Hom}
 \ \n
 
 In the definition of weighted simplicial complex, the weights of the simplices are ascending when their dimensions go up. But we find that another type of weight functions on simplicial complexes defined below are more suitable for the study of orbifolds. 
 
\begin{defi}
 A \emph{descending weighted simplicial complex}
 is a pair $(L, \overline{w})$ where $L$ is a simplicial complex and $\overline{w}$ is a function assigning a positive integer to each simplex of $L$, which satisfy: for any simplices $\sigma, \sigma'$ in $L$,
  \[ \sigma'\ \text{is a face of}\ \sigma \Rightarrow \overline{w}(\sigma)\, |\, \overline{w}(\sigma').  \]
  
  We call $\overline{w}$ a \emph{descending weight} on $L$. There is a natural boundary operator $\partial^{\overline{w}}: C_n(L)\rightarrow C_{n-1}(L)$, $n\geq 0$, defined by:
  \begin{equation} \label{Equ:weighted-Boun-DW}
   \partial^{\overline{w}}: \sigma \longmapsto \sum^n_{j=0} \frac{\overline{w}(\partial_j(\sigma))}{\overline{w}(\sigma)} (-1)^j \partial_j \sigma. 
   \end{equation}
  It is easy to prove $\partial^{\overline{w}}\circ\partial^{\overline{w}}=0$.  We call $(C_*(L), \partial^{\overline{w}})$ the
  \emph{weighted simplicial chain complex} of $(L,\overline{w})$, whose homology group is called the \emph{weighted simplicial homology group} of $(L,\overline{w})$, denoted by 
  $H_*(L,\partial^{\overline{w}})$.
 \n
  \end{defi}
  
  A \emph{morphism} from $(L,\overline{w})$ to $(L',\overline{w}')$ is defined to be a simplicial map $\overline{\varrho}: L\rightarrow L'$ that satisfies:
    \begin{equation} \label{Equ:Mor-Descend}
      \overline{w}(\sigma) \mid \overline{w}'(\overline{\varrho}(\sigma)), \ \text{for every
   simplex $\sigma$ of $L$}. 
   \end{equation}
   In particular, $\overline{\varrho}$ is called \emph{weight-preserving} if $\overline{w}'(\overline{\varrho}(\sigma))=\overline{w}(\sigma)$ for any simplex $\sigma$ of $L$.
     It is easy to see that a morphism
   $\overline{\varrho}$ induces a chain map 
   \[ \overline{\varrho}_{\#}:
   \big(C_*(L),\partial^{\overline{w}} \big)\rightarrow \big(C_*(L'), \partial^{\overline{w}'}\big), \ \text{where}\
    \overline{\varrho}_{\#}(\sigma) = \frac{\overline{w}'(\overline{\varrho}(\sigma))}{\overline{w}(\sigma)} \overline{\varrho}(\sigma), \ \forall\sigma\in L. \] 
  Then $\overline{\varrho}$ further induces a homomorphism on the homology
  groups, denoted by 
   $$ \overline{\varrho}_*:  H_*(L,\partial^{\overline{w}})\rightarrow H_*(L',\partial^{\overline{w}'}).$$
  
  \begin{rem}
    The definition~\eqref{Equ:Mor-Descend} suggests that descending weighted simplicial complexes are more suitable for the study of orbifolds. 
 Indeed, suppose a discrete group $G$ acts smoothly and properly discontinuously on manifolds $X$ and $Y$. Then any equivariant map $\varphi : X\rightarrow Y$ induces a map on their orbit spaces, denoted by $ \overline{\varphi}: X/G\rightarrow Y/G$ (see Example~\ref{Exam:Good-Orbifold}).
 It is natural to define the weight of an orbit $\overline{x}\in X\slash G$ (or $\overline{y}\in Y\slash G$) to be $|G_x|$ (or $|G_y|$).
 Then since for any $x\in X$, the isotropy group $G_x$ is a subgroup of $G_{\varphi(x)}$,
 the weight of $\overline{x}$ always divides
 the weight of $\overline{\varphi}(\overline{x})=\overline{\varphi(x)}$, which corresponds to the condition in~\eqref{Equ:Mor-Descend}.  
  \end{rem}
   
   The following are some conventions that will be used in the rest of the paper.
\begin{itemize}
\item[\textbf{(CV-1)}] A simplicial complex is always assumed to be finite (i.e. having only finitely many vertices) if not specified otherwise.\n

\item[\textbf{(CV-2)}] To avoid ambiguity, we call a weighted simplicial complex $(K,w)$ defined by Dawson~\cite{Daw90} an \emph{ascending weighted simplicial complex} and call $w$ an \emph{ascending weight}.
\n

\item[\textbf{(CV-3)}] When we say $(K,\mu)$ is a weighted simplicial complex, $\mu$ could be either an ascending weight or a descending one.\n
 
 \item[\textbf{(CV-4)}] We say that an ascending weighted simplicial complex and a descending weighted one are of \emph{opposite type}, so do we call their weight functions.\n
 
  \item[\textbf{(CV-5)}] If $(K,\mu)$ is a weighted simplicial complex and $L$ is a subcomplex of $K$, we also use $(L,\mu)$ to denote the weighted simplicial complex
  $(L,\mu|_L)$ and, use $H_*(L,\partial^{\mu})$
  instead of the cumbersome notation $H_*(L,\partial^{\mu|_L})$ to denote its weighted homology groups.  
  \end{itemize}
 \n
 For a weighted simplicial complex $(K,\mu)$, let
 \begin{equation}\label{Equ:N-mu}
   N_{\mu} := \text{the least common multiple of all the weights $\mu(\sigma)$}, \ \sigma\in K. 
   \end{equation}
 
 \begin{lem} \label{Lem:Adjoint-Weight}
 For any weighted simplicial complex $(K,\mu)$, there exists a  weighted simplicial complex $(K,\mu^*)$
 of opposite type such that
 \begin{equation} \label{Equ:Homology-Adjoint-Equal}
   H_i(K,\partial^{\mu^*}) \cong H_i(K,\partial^{\mu}) \ \text{for all}\ i \geq 0.
   \end{equation}
  \end{lem}
  \begin{proof}
 Define another weight $\mu^*$ on $K$ by 
\begin{equation} \label{Equ:Adjoint-Weight}
 \mu^*(\sigma) = \frac{N_{\mu}}{\mu(\sigma)}, \ \forall \sigma\in K.
 \end{equation}
 
 By the definitions of $\partial^{\mu}$ and $\partial^{\mu^*}$, $(C_*(K),\partial^{\mu})$ and $(C_*(K),\partial^{\mu^*})$ are in fact isomorphic chain complexes. So
 $H_*(K,\partial^{\mu^*}) \cong H_*(K,\partial^{\mu})$. 
  \end{proof}
 
 \begin{defi}[Adjoint of a Weight]\label{Def:Adjoint-Wt}
 For a weighted simplicial complex $(K,\mu)$, we call the weight function $\mu^*$ defined in~\eqref{Equ:Adjoint-Weight} the \emph{adjoint} of $\mu$. 
  \end{defi}
 
 By Lemma~\ref{Lem:Adjoint-Weight}, descending weighted simplicial complexes can be considered as mirror objects of
 ascending weighted ones. But not all properties of these two categories of objects are the same.
 We will see later in Section~\ref{Sec:Product-Cohomology} that there is a natural
   product structure on the weighted cohomology groups with integral coefficients on a  descending weighted simplicial complex.
  But there is no such a product for an ascending weighted simplicial complex. 
  \n

 \subsection{Augmentation of weighted simplicial complex}
 \label{Subsec:Augmentation}
 \ \n
 A (abstract) chain complex $\mathcal{C}_*=(C_p,\partial_p)$ is called \emph{non-negative} if $C_p=0$ for all $p<0$.
 In this paper, all chain complexes are assumed to be non-negative if not specified otherwise.\n
 
   The \emph{augmentation} of a non-negative chain complex 
   $\mathcal{C}_* =\{ \partial_n: C_n\rightarrow C_{n-1} \}_{n\geq 1}$ is a map $\varepsilon: C_0\rightarrow \Z$ such that $\varepsilon\circ \partial_1=0$. The \emph{augmented chain complex} $(\mathcal{C}_*,\varepsilon)$ is the
chain complex obtained from $\mathcal{C}_*$ by adjoining the group $\Z$ in dimension $-1$ and
using $\varepsilon$ as the boundary operator in dimension $0$. \n

 Let $(\mathcal{C}_*,\varepsilon)$ and $(\mathcal{C}'_*,\varepsilon')$ be two augmented chain complexes.
 We call a chain map $\phi = \{ \phi_n\}: \mathcal{C}_* \rightarrow \mathcal{C}'_*$ \emph{augmentation-preserving} if $\epsilon' \circ \phi_0 = \epsilon$.
 \n

   Let $(K,\mu)$ be a weighted simplicial complex.
   We define an augmentation of $(C_*(K),\partial^{\mu})$, denoted by $\varepsilon^{\mu}: C_0(K,\partial^{\mu})\rightarrow \Z$, as follows:
   \begin{itemize}
   \item If $\mu$ is ascending, define
   \begin{equation} \label{Equ:Aug-1}
      \varepsilon^{\mu}(v_i) := \mu(v_i) \ \text{for any vertex} \ v_i \ \text{of}\ K. 
     \end{equation}

     \item If $\mu$ is descending, its adjoint $\mu^*$ is an ascending weight on $K$. Define
     \begin{equation} \label{Equ:Aug-2}
      \varepsilon^{\mu}(v_i) := \mu^*(v_i) = \frac{N_{\mu}}{\mu(v_i)} \ \text{for any vertex} \ v_i \ \text{of}\ K.
     \end{equation}  
     \end{itemize}  
   The reason why we use $N_{\mu}$ as the numerator here is that we want to use integral coefficients for our weighted homology.\n
  It is easy to check that the above defined $ \varepsilon^{\mu}$
  is an augmentation on $(C_*(K),\partial^{\mu})$ in both cases. Then we call the homology of the augmented chain complex
  $\big( C_*(K),\partial^{\mu}, \varepsilon^{\mu} \big)$
  the \emph{reduced weighted simplicial homology} of
  $(K,\mu)$, denoted by $\widetilde{H}_*(K,\partial^{\mu})$.
  The following lemma is easy to see from our definitions.
  
  \begin{lem} \label{Lem:Aug-ext}
  Let $(K,\mu)$ and $(K',\mu')$ be two weighted simplicial complexes of the same type.  If $\varphi: (K,\mu)\rightarrow (K',\mu')$ is a morphism, then $\varphi$ extends to an
  augmentation-preserving chain map $\widetilde{\varphi}$ between the augmented chain complexes
  $\big( C_*(K),\partial^{\mu},  \varepsilon^{\mu} \big)$
  and $\big( C_*(K'),\partial^{\mu'},  \varepsilon^{\mu'} \big)$ where at degree $-1$, $\widetilde{\varphi}: \Z\rightarrow \Z$ is defined by: for any $n \in\Z$,
  $$\widetilde{\varphi}(n) = 
  \begin{cases}
   n ,  &  \text{if $(K,\mu)$ and $(K',\mu')$ are both ascending}; \\
   \frac{N_{\mu'}}{N_{\mu}}\cdot n,  &  \text{if $(K,\mu)$ and $(K',\mu')$ are both descending}.
 \end{cases} $$
  \end{lem}
  
  \n
  
  \subsection{Cartesian product of weighted simplicial complexes}
  \ \n
  
   Let $(K,\mu)$ and $(K',\mu')$ be two weighted simplicial complexes and let
  $$v_1\prec\cdots\prec v_m, \ \ v'_1\prec \cdots \prec v'_{n}$$
  be some total orderings of the vertices of $K$ and
  $K'$, respectively. Then the Cartesian product $K\times K'$ is a simplicial complex whose vertex set is 
  $$\{ (v_i,v'_j)\,|\, 1\leq i \leq m, 1\leq j \leq n \}.$$
  Moreover, all the simplices in $K\times K'$ are of the form  
  \begin{equation} \label{Equ:Product-Simplices}
    \{ (v_{i_1},v'_{j_1}),\cdots, (v_{i_s},v'_{j_s}) \},
  \ v_{i_1}\preccurlyeq \cdots \preccurlyeq v_{i_s},\ v'_{j_1}\preccurlyeq \cdots \preccurlyeq v'_{j_s},
  \end{equation}
  where $\{v_{i_1},\cdots, v_{i_s}\}$ is a simplex in $K$
  and $\{ v'_{j_1},\cdots, v'_{j_s} \}$ is a simplex in $K'$.
  \n
  
 \begin{defi}[Cartesian product of weighted simplicial complexes]\label{Def:Product-Wt-Complexes}
   Let $(K,\mu)$ and $(K',\mu')$ be weighted simplicial complexes which are both ascending or both descending. 
  The \emph{Cartesian product} of $(K,\mu)$ and $(K',\mu')$ with respect to some total orderings of the vertices of $K$ and $K'$
   is a weighted simplicial complex $(K\times K',\mu\times \mu')$, where for a simplex
  $\{(v_{i_1},v'_{j_1}),\cdots, (v_{i_s},v'_{j_s})\}$ of
  $K\times K'$,
  \begin{equation} \label{Equ:Product-Weight}
   \mu\times \mu'\big( \{(v_{i_1},v'_{j_1}),\cdots, (v_{i_s},v'_{j_s})\} \big):= \mu(\{v_{i_1},\cdots, v_{i_s}\})\cdot \mu'(\{v'_{j_1},\cdots, v'_{j_s}\}).  
   \end{equation}
   It is easy to see that
  $(K\times K',\mu\times \mu')$ has the same type as $(K,\mu)$ and $(K',\mu')$.  
  \end{defi}
  
  Notice that the simplicial complex structure of $K\times K'$ depends on the ordering of vertices of $K$ and $K'$, so does $(K\times K',\xi\times \xi')$. But 
  we usually omit the ordering of vertices in our notation.
  
  \begin{rem}
   There is another meaningful weight function $  \overline{\mu\times \mu'}$ on $K\times K'$ defined by:
   for each simplex
  $\{(v_{i_1},v'_{j_1}),\cdots, (v_{i_s},v'_{j_s})\}$ of
  $K\times K'$ as in~\eqref{Equ:Product-Simplices},
    $$ \overline{\mu\times \mu'} (\{(v_{i_1},v'_{j_1}),\cdots, (v_{i_s},v'_{j_s})\}) := \mathrm{lcm}\big(\mu(\{v_{i_1},\cdots, v_{i_s}\}), \mu'(\{v'_{j_1},\cdots, v'_{j_s}\}) \big),$$
    where ``lcm'' is the abbreviation for least common multiple. In~\cite{Daw90}, $(K\times K',\overline{\mu\times \mu'})$ is called the
  product of $(K,\mu)$ and $(K',\mu')$ (see~\cite[Proposition 1.1]{Daw90}).
  But the Cartesian product in Definition~\ref{Def:Product-Wt-Complexes} is a more suitable notion for our study of orbifolds. The reason is that the local group of
  the product of two orbifolds $\mathcal{M}=(M,\mathcal{U})$ and $\mathcal{N}=(N,\mathcal{V})$ at a point $(x,y) \in M\times N$ is the product of the local group
  of $\mathcal{M}$ at $x$ and the local group of $\mathcal{N}$ at $y$. Therefore, the weight of $(x,y)$ in the product orbifold is the product (not the least common multiple) of the weights of $x$ in $\mathcal{M}$ and $y$ in $\mathcal{N}$.
  \end{rem}

  \begin{prop} \label{Prop:K-I-Product}
   Let $(K,\mu)$ be a weighted simplicial complex. Then
   there is an isomorphism 
    $H_*(K,\partial^{\mu}) \cong H_*(K\times [0,1],  \partial^{\mu\times\mathbf{1}})$.
    \end{prop}
 \begin{proof}
 Let $i_0: K\hookrightarrow K\times \{0\}\subset K\times [0,1]$ be the inclusion and  $p: K \times [0,1]\rightarrow K$ be the projection.
  It is easy to check that they induce chain maps
   $$(i_0)_{\#}:  \big(C_*(K),\partial^{\mu}\big) \rightarrow
   \big(C_*(K\times [0,1]),
    \partial^{\mu\times\mathbf{1}}\big),$$
   $$\ \ \ p_{\#}: \big( C_*(K\times [0,1]), \partial^{\mu\times\mathbf{1}} \big) \rightarrow \big(C_*(K),\partial^{\mu} \big).  $$
    
    \textbf{Claim:} $(i_0)_{\#}$ and $p_{\#}$ are chain homotopy inverse of each other.\n
    
    First of all, clearly $p_{\#}\circ (i_0)_{\#}=\mathrm{id}_{C_*(K)}$. It remains to prove that $(i_0)_{\#}\circ p_{\#}$ is chain homotopic to the identity map of $C_*(K\times [0,1])$.    
    Let the vertices of $K\times [0,1]$ be
    $$ \text{$v_1,\cdots, v_m \in K\times\{0\}$; \
    $v'_1,\cdots, v'_m\in K\times \{1\}$ where
    $p(v'_j)=v_j$, $1\leq j \leq m$.}$$ Then
    any simplex of $K\times [0,1]$ can be written as one of the following forms:
    $$\{ v_{j_1},\cdots, v_{j_r} \}\in K\times\{0\} , \ \ 
   \{v'_{j_1},\cdots, v'_{j_r} \} \in K\times \{1\} ,
     \ 1\leq j_1< \cdots  < j_r \leq m;$$
    $$\{ v_{j_1},\cdots, v_{j_s}, v'_{j_{s+1}},\cdots, v'_{j_r} \}, \ 1\leq j_1< \cdots < j_s \leq j_{s+1} <\cdots< j_r \leq m $$
    where $\{v_{j_1},\cdots, v_{j_r}\}$ is a simplex in $K$.
    Note that this is a simplified way to write the simplices of $K\times [0,1]$ given by~\eqref{Equ:Product-Simplices}.\n
    
    Note that $i_0$ and $p$ are both weight-preserving by the definition of $\mu\times\mathbf{1}$ in~\eqref{Equ:Product-Weight}.    
  Moreover,
  $$i_0\circ p\,(\{v_{j_1},\cdots, v_{j_r} \}) = 
  i_0\circ p\,(\{v'_{j_1},\cdots, v'_{j_r} \}) = \{v_{j_1},\cdots, v_{j_r} \};$$
   $$i_0\circ p\, (\{v_{j_1},\cdots, v_{j_s}, v'_{j_{s+1}},\cdots, v'_{j_r}\}) = \begin{cases}
   \{v_{j_1},\cdots,v_{j_r}\},  &  \text{if $j_s<j_{s+1}$}; \\
   \{v_{j_1},\cdots, v_{j_s}, v_{j_{s+2}},\cdots,v_{j_r}\},  &  \text{if $j_s=j_{s+1}$}.
 \end{cases} 
 $$
    So the chain map $(i_0)_{\#}\circ p_{\#}$ vanishes at $[v_{j_1},\cdots, v_{j_s}, v'_{j_s},\cdots, v'_{j_r}]$
    because of the dimension reason.  \n

    Next, define $\big\{ P_n : 
  C_n (K\times [0,1])
  \rightarrow  C_{n+1}(K\times [0,1]) \big\}_{n\geq 0}$ by 
  $$ P_{r-1}([v_{j_1},\cdots, v_{j_r}])= 0; $$
  $$P_{r-1}([v'_{j_1},\cdots, v'_{j_r}]) = \sum^r_{t=1} (-1)^{t-1} [v_{j_1},\cdots, v_{j_t}, v'_{j_t},\cdots, v'_{j_r}], \ r\geq 1;$$
   \begin{align*}
   &\ P_{r-1} ( [v_{j_1},\cdots, v_{j_s}, v'_{j_{s+1}},\cdots, v'_{j_r} ] ) \\
   =& \begin{cases}
   \underset{s+1\leq t\leq r}{\sum} (-1)^{t-1} 
  [v_{j_1},\cdots, v_{j_s},\cdots, v_{j_t}, v'_{j_t},\cdots, v'_{j_r} ],  &  \text{if $j_s < j_{s+1}$}; \\
  \ 0,  &  \text{if $j_s=j_{s+1}$}.
 \end{cases}  
 \end{align*}
 It is routine to check that $\{ P_n\}_{n\geq 0}$ is a chain homotopy between $(i_0)_{\#}\circ p_{\#}$ and the identity map $\mathrm{id}_{C_*(K\times [0,1])}$.  
   \end{proof}

  \n
   
   \subsection{Divisibly weighted simplicial complex}
   \label{Subsec:Div-Weighted-Complex}
   \ \n

  Let $(K,w)$ be an ascending weighted simplicial complex. A simplex $\sigma$ in $K$ is called \emph{divisibly weighted} (see Dawson~\cite{Daw90}) if all the vertices of $\sigma$ can be ordered as
 $\{v_0,\cdots, v_k\}$ such that
   \begin{equation} \label{Equ:Weight-cond}
     w(v_0)\, |\, \cdots\, | \, w(v_k) = w(\sigma).
   \end{equation}
 
 If all the simplices in $(K,w)$ are divisibly weighted, 
  the weight function $w$ is called \emph{divisible}. \n
 
 Similarly, for a descending weighted simplicial complex
  $(L,\overline{w})$ as well, we call a simplex $\sigma$ in $L$  \emph{divisibly weighted} if all the vertices of $\sigma$ can be ordered as
 $\{v_0,\cdots, v_k\}$ such that
   \begin{equation} \label{Equ:Weight-cond-decre}
    \overline{w}(\sigma) = w(v_0)\, |\, \cdots\, | \, w(v_k).
   \end{equation}
 
 If all the simplices in $(L,\overline{w})$ are divisibly weighted, 
  the weight function $\overline{w}$ is also called \emph{divisible}. 
 \n
 
 In general, we call a weighted simplicial complex  
 $(K,\mu)$ \emph{divisibly weighted} if its weight function $\mu$ (either ascending or descending) is 
 divisible.
Clearly, a divisible weight on a simplicial complex $K$ is completely determined by its 
   value on the vertices of $K$.  
  \n
  
 The following convention will be used in the rest of the paper. \n
 
 \begin{itemize}
 \item[\textbf{(CV-6)}] We will use Greek letters $\xi$ and $\eta$ to refer to a divisible weight on a simplicial complex $K$ to distinguish it from a general weight function $\mu$.\n
   \end{itemize}
   
   \begin{lem} \label{Lem:Morph-Vertex}
       Suppose $(K,\xi)$ and $(K',\xi')$ are two divisibly weighted simplicial complexes of the same type.  Then
   a simplicial map $\varphi: K\rightarrow K'$ is a morphism from $(K,\xi)$ to $(K',\xi')$ if and only if
   for every vertex $v$ of $K$,
   $$ \begin{cases}
   \xi'(\varphi(v)) \mid \xi(v),  &  \text{if $\xi$ and $\xi'$ are ascending}; \\
   \xi(v) \mid \xi'(\varphi(v)),  &  \text{if $\xi$ and $\xi'$ are descending}.
 \end{cases} $$    
 In particular, $\varphi$ is weight-preserving if and only if
 $\xi'(\varphi(v))=\xi(v)$ for every vertex $v$ of $K$.
   \end{lem}   
   \begin{proof}
    This follows easily from the definitions of
  divisibly weighted simplicial complex and the morphism of weighted simplicial complex.   
   \end{proof}
   
 \begin{prop}
  \label{Prop:Prod-Div-Weighted-Complex}
     The Cartesian product of two divisibly weighted simplicial complexes of the same type is also divisibly weighted.
  \end{prop}
  \begin{proof}
   Let $(K,\xi)$ and $(K',\xi')$ be two divisibly weighted simplicial complexes of the same type.    
  Let
  $v_1\prec\cdots\prec v_m$ and $v'_1\prec \cdots \prec v'_{n}$
   be total orderings of the vertices of $K$ and
  $K'$, respectively, such that
  \begin{equation} \label{Equ:vertex-order}
     \xi(v_1) \leq \cdots \leq \xi(v_m), \ \ 
    \xi'(v'_1) \leq \cdots \leq \xi'(v'_{n}).
  \end{equation}  
  Any simplex in $K\times K'$ is of the form
  $$\{(v_{i_1},v'_{j_1}),\cdots, (v_{i_s},v'_{j_s})\},\
  v_{i_1}\preccurlyeq \cdots \preccurlyeq v_{i_s},\ v'_{j_1} \preccurlyeq \cdots \preccurlyeq v'_{j_s} $$
  where $\{v_{i_1},\cdots, v_{i_s}\}$ is a simplex of $K$
   and $\{v'_{j_1},\cdots, v'_{j_s}\}$ is a simplex of $K'$.
    Then since $(K,\xi)$ and $(K',\xi')$ are both divisibly weighted, both
    $(\{v_{i_1},\cdots, v_{i_s}\},\xi)$ and 
  $(\{v'_{j_1},\cdots, v'_{j_s}\},\xi')$ are
   divisibly weighted simplices.\n
   
    If $(K,\xi)$ and $(K',\xi')$ are both ascending, then we have
  \begin{align} \label{Equ:Divide-WT}
     \xi(v_{i_1}) \mid \cdots \mid \xi(v_{i_s}) &=\xi(\{v_{i_1},\cdots, v_{i_s}\}), \\ 
  \xi'(v'_{j_1}) \mid \cdots  \mid \xi'(v'_{j_s})&=\xi'(\{v'_{j_1},\cdots, v'_{j_s}\}). \notag 
  \end{align}
   So by Definition~\ref{Def:Product-Wt-Complexes}, the weight $\xi\times \xi'$ on $K\times K'$ is given by
  \begin{align*}
    (\xi\times \xi')\big(\{(v_{i_1},v'_{j_1}),\cdots, (v_{i_s},v'_{j_s})\}\big) &= \xi(\{v_{i_1},\cdots, v_{i_s}\}) \cdot \xi'(\{v'_{j_1},\cdots, v'_{j_s}\})\\
    &= \xi(v_{i_s})\cdot \xi'(v'_{j_s}),
    \end{align*} 
      In particular, 
    $$(\xi\times \xi')((v_i,v'_j))=  \xi(v_i)\cdot\xi'(v'_j), \ 1\leq i\leq m, \ 1\leq j \leq n.$$  
    Then from~\eqref{Equ:Divide-WT}, we obtain
   $$ (\xi\times \xi')((v_{i_1},v'_{j_1})) \mid \cdots \mid
   (\xi\times \xi')((v_{i_s},v'_{j_s})) = (\xi\times \xi')\big(\{(v_{i_1},v'_{j_1}),\cdots, (v_{i_s},v'_{j_s})\}\big), $$
 which means that $\big(\{(v_{i_1},v'_{j_1}),\cdots, (v_{i_s},v'_{j_s})\},\xi\times \xi'\big)$ is a divisibly weighted simplex. So $(K\times K',\xi\times \xi')$
    is a divisibly weighted simplicial complex. \n
    
    Similarly, if $(K,\xi)$ and $(K',\xi')$ are 
    both descending, the weight $\xi\times \xi'$ on $K\times K'$ is given by
  \[ (\xi\times \xi')\big(\{(v_{i_1},v'_{j_1}),\cdots, (v_{i_s},v'_{j_s})\}\big) = \xi(v_{i_1})\cdot\xi'(v'_{j_1}).
    \]
 It is also easy to check that $(K\times K',\xi\times \xi')$ is divisibly weighted.       
  \end{proof}

 \begin{defi}[Inversion of a Divisible Weight] \label{Def:Associated-Weight}
 If $\xi$ is an ascending divisible weight on a simplicial complex 
 $K$, then $\xi$ canonically determines a descending divisible weight
 $\widehat{\xi}$ on $K$ by: for any simplex $\sigma$ in $K$, 
   \begin{equation} \label{Equ:Weight-cond-Des}
    \widehat{\xi}(\sigma)= \xi(v_0)\, |\, \cdots\, | \, \xi(v_k) = \xi(\sigma)
   \end{equation}
   where $\{v_0,\cdots, v_k\}$ is the vertex set of 
    $\sigma$. We call $\widehat{\xi}$
   the \emph{inversion of $\xi$}. \n
   
  Similarly, if $\eta$ is a descending divisible weight on $K$, we obtain an ascending divisible weight
 $\widehat{\eta}$ on $K$ defined by
   \begin{equation} \label{Equ:Weight-cond-Ins}
  \eta(\sigma) =  \eta(v_0)\, |\, \cdots\, | \, \eta(v_k) = \widehat{\eta}(\sigma).
   \end{equation}
     We call $\widehat{\eta}$ the \emph{inversion of $\eta$}. Note that a divisible weight on $K$ always agrees with its inversion on every vertex of $K$.
   \end{defi}
   
      The following lemma is immediate from our definition.
      
       \begin{lem} \label{Lem:Inver-Adjoint}
     For a divisible weight $\xi$ on a simplicial complex,
     the adjoint of the inversion of $\xi$ agrees with the inversion of the adjoint of $\xi$, i.e. $(\widehat{\xi})^* = \widehat{\xi^*}$.
   \end{lem}

  By definition, the inversion of
   a divisible weight $\xi$ is different from its adjoint  (see Figure~\ref{p:Inversion-Adjoint} for example).
   In addition, the inversion of the inversion of $\xi$ always goes back to $\xi$, while the adjoint of the adjoint of $\xi$ may differ from $\xi$ by a constant factor. 
   
    \begin{figure}[h]
         \includegraphics[width=0.75\textwidth]{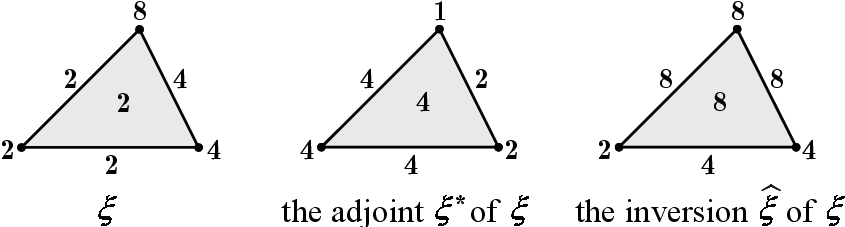}\\
          \caption{The adjoint and the inversion of a divisible weight $\xi$}\label{p:Inversion-Adjoint}
      \end{figure}
   
  A divisible weight $\xi$ on $K$ and its inversion $\widehat{\xi}$ clearly determine each other since they have the same values on the vertices. But generally speaking, the chain complexes $(C_*(K),\partial^{\xi})$ and
  $(C_*(K),\partial^{\widehat{\xi}})$ are not isomorphic,
  neither are the weighted homology groups $H_*(K,\partial^{\xi})$ and
  $H_*(K,\partial^{\widehat{\xi}})$.
Indeed, it is possible that two divisible weighted
simplicial complexes $(K,\xi)$ and $(K',\xi')$ of the same type satisfy $H_*(K,\partial^{\xi})\cong H_*(K',\partial^{\xi'})$ while $H_*(K,\partial^{\widehat{\xi}})\ncong H_*(K',\partial^{\widehat{\xi'}})$ (see Example~\ref{Exam:1-dim-orbifold}). \n

\begin{warn}
If a simplicial map $\varrho: K\rightarrow K'$ determines a morphism from $(K,\xi)$ to $(K',\xi')$, it does not
necessarily induce a morphism from $(K,\widehat{\xi})$ to $
(K',\widehat{\xi'})$.
This is because the morphisms for ascending
vs. descending weighted simplicial complexes satisfy
the opposite conditions (see~\eqref{Equ:Mor-Ascend} and~\eqref{Equ:Mor-Descend}). So
only when the map
 $\varrho$ is weight-preserving can it determine  morphisms $(K,\xi)\rightarrow (K',
  \xi')$ and
  $(K,\widehat{\xi})\rightarrow (K',
  \widehat{\xi'})$ simultaneously.
 \end{warn}

\subsection{Barycentric subdivision of Weighted Simplicial Complex}
   \ \n
   For any weighted simplicial complex $(K,\mu)$, there is a natural divisible weight on the barycentric subdivision $Sd(K)$ of $K$ defined as follows.

 \begin{defi}[Barycentric Subdivision of Weighted Simplicial Complex] \label{Def:Bary-Subdiv-Weighted}
\ \n
 For a weighted simplicial complex $(K,\mu)$, define the weight of the barycenter $b_{\sigma}$ of any simplex $\sigma$ in $K$ to be $\mu(\sigma)$. Since any simplex in $Sd(K)$ is of the form $\{b_{\sigma_0},\cdots, b_{\sigma_l}\}$ where
   $\sigma_0\subsetneq \cdots\subsetneq \sigma_l \in K$,
   we define a weight function $Sd(\mu)$ on $Sd(K)$ by:
    \begin{equation} \label{Equ:Subdiv-w}
     Sd(\mu)\big(\{ b_{\sigma_0},\cdots, b_{\sigma_l}\}\big)=\mu(\sigma_l),\ \sigma_0\subsetneq \cdots\subsetneq \sigma_l \in K.
     \end{equation}
   
   Note that we have either
   $\mu(\sigma_0)\, | \, \cdots \, | \,\mu(\sigma_l)$ or 
    $\mu(\sigma_l) \, |\, \cdots\, |\, \mu(\sigma_0)$ depending on whether $\mu$ is ascending or descending. Then it is easy to 
    check that $Sd(\mu)$ is a divisible weight on $Sd(K)$ which is ascending (or descending) if so is $\mu$.  
     We call $(Sd(K),Sd(\mu))$ the \emph{barycentric subdivision} of $(K,\mu)$.\n
     
  Another useful fact of $(Sd(K),Sd(\mu))$ is that for an $n$-simplex $\sigma$ in $K$, every $n$-simplex $\tau$ in $Sd(\sigma)$ is of the form $\{b_{\sigma_0},\cdots, b_{\sigma_n}\}$ where
$\sigma_0\subsetneq \cdots\subsetneq \sigma_n=\sigma$.
So we always have 
\begin{equation} \label{Equ-Sd-n-simplex}
 Sd(\mu)(\tau) = \mu(\sigma).
 \end{equation}
 
 For any integer $m\geq 1$, let $(Sd^m(K),Sd^m(\mu))$ 
 denote the $m$ iterated barycentric subdivisions of $(K,\mu)$.
  \end{defi}

 \subsection{Contractible weighted simplicial complex}
 \ \n
 
  The following definitions are introduced by Dawson~\cite[p.\,236]{Daw90} for ascending weighted simplicial complexes. But clearly they can be defined for descending weighted simplicial complexes as well.

  \begin{defi}[Contiguous Morphisms] \label{Def:Contigu}
  Suppose $(K,\mu)$ and $(K',\mu')$ are two weighted simplicial complexes of the same type. Two morphisms $$\varrho_0,\varrho_1: (K,\mu)\rightarrow (K',\mu')$$ are called \emph{contiguous}, denoted by $\varrho_0 \underset{c}{\simeq} \varrho_1$,  
  if there exists a morphism
  $$ F: (K \times [0, 1], \mu \times \mathbf{1}) \rightarrow (K',\mu')\  \text{with}\
  F(x, 0) =\varrho_0(x),\, F(x,1)= \varrho_1(x),\ \forall x\in K.$$
   Here $([0,1],\mathbf{1})$ is a weighted simplicial complex where the weight of every 
   simplex of $[0,1]$ is $1$, and $(K\times [0,1],\mu\times \mathbf{1})$ is the Cartesian product $(K,\mu)$ with
   $([0,1],\mathbf{1})$ (see Definition~\ref{Def:Product-Wt-Complexes}). 
   Moreover, if $\varrho_0$, $\varrho_1$ and $F$ are all weight-preserving, we say that 
   $\varrho_0$ and $\varrho_1$ are \emph{strongly contiguous}.
  \end{defi}

 We warn that the term ``contiguous'' is also used in Munkres's textbook~\cite{Munk84} to describe a relation between two simplicial maps. But its meaning is slightly different from what is defined above. The following lemma
shows a connection between these two notions.

\begin{lem} \label{Lem:Contiguity}
Let $\varrho,\varrho': (K,\xi)\rightarrow (K',\xi')$ be two morphisms between divisibly weighted simplicial complexes $(K,\xi)$ and $(K',\xi')$. 
   If for every simplex $\sigma$ of $K$, $\varrho(\sigma)$ and $\varrho'(\sigma)$ span a simplex in $K'$, then
    $\varrho$ and $\varrho'$ are contiguous. 
   If moreover, $\varrho$ and $\varrho'$ are both weight-preserving, then $\varrho$ and $\varrho'$ are strongly contiguous.
\end{lem}
\begin{proof}
 First of all, triangulate $K\times [0,1]$
with respect to a total ordering of the vertices of $K$. 
For a simplex $\sigma=\{v_0,\cdots,v_n\}$ of $K$, we can
define a simplicial map $F_{\sigma}: \sigma\times [0,1]\rightarrow K'$ by 
   $$F_{\sigma} (v_i\times \{0\}) = \varrho(v_i),\ \ 
   F_{\sigma} (v_i\times \{1\}) = \varrho'(v_i), \ i=0,\cdots,n. $$
  More specifically, for any $0\leq s \leq n$, the simplex of $\sigma\times [0,1]$ spanned by 
  $$ v_0\times\{0\},\cdots, v_s\times\{0\}, v_s\times\{1\}, \cdots, v_n\times \{1\} $$ 
  is mapped linearly to the unique simplex of $K'$, denoted by
  $\gamma_{\sigma}$, spanned by 
  $$\varrho(v_0),\cdots,
  \varrho(v_s), \varrho'(v_s),\cdots, \varrho'(v_n).$$
  Such a simplex $\gamma_{\sigma}$ exists because 
  $\varrho(\sigma)$ and $\varrho'(\sigma)$ span a simplex in $K'$. 
   Moreover, $F_{\sigma}: \big(\sigma\times [0,1],\xi\times \mathbf{1}\big)\rightarrow (K',\xi')$ is a morphism of weighted simplicial complexes. This follows from Lemma~\ref{Lem:Morph-Vertex} and the assumption that
  $\xi$ and $\xi'$ are both divisibly weighted and $\varrho$ and $\varrho'$ are both morphisms. In addition, if $\tau$ is a face of $\sigma$, the map $F_{\tau}$ is clearly the restriction
   of $F_{\sigma}$ to $\tau\times [0,1]$.
     So we obtain a set of
    compatible simplicial maps $\{F_{\sigma}:  \sigma\times [0,1]\rightarrow K' \}_{\sigma\in K}$ which together defines a morphism 
    $$F: \big( K \times [0,1],
   \xi\times \mathbf{1}\big) \rightarrow (K',\xi'),$$
   where $F|_{ K\times \{0\}} = \varrho$ and
   $F|_{K\times \{1\}} = \varrho'$. 
   So $\varrho$ is contiguous to $\varrho'$.\n
   
    Moreover, if $\varrho$ and $\varrho'$ are both weight-preserving, each $F_{\sigma}$ and hence the whole map $F$ is weight-preserving. So $\varrho$ is strongly contiguous to $\varrho'$.
    \end{proof}

  \begin{thm}[\text{\cite[Theorem 2.5]{Daw90}}] \label{Thm:Contig}
   If two morphisms $\varrho_0,\varrho_1: (K,\mu)\rightarrow (K',\mu')$ of weighted simplicial complexes 
   are contiguous, then the chain maps $(\varrho_0)_{\#}$
   and $(\varrho_1)_{\#}$ are chain homotopic and hence  $$ (\varrho_0)_*= (\varrho_1)_*:  H_*(K,\partial^{\mu})\rightarrow H_*(K',\partial^{\mu'}).$$ 
  \end{thm}

 \begin{defi}[Contractible weighted simplicial complex] \label{Def:Contrac-WSC} \ \n
  A weighted simplicial complex $(K, \mu)$ is called \emph{contractible} if there exists a sequence 
  $\varrho_0, \varrho_1, \ldots, \varrho_n: (K,\mu) \rightarrow (K, \mu)$ of morphisms of weighted simplicial complexes such that $\varrho_0$ is the identity on $(K, \mu)$, $\varrho_i$ is contiguous to $\varrho_{i-1}$ for each $1\leq i\leq n$, and $\varrho_n$ is a constant map. 
 \end{defi}

   \begin{prop}[\text{\cite[Corollary 2.5.1]{Daw90}}] \label{Prop:Contrac-WSC}
     If a weighted simplicial complex $(K,\mu)$ is
     contractible, then  
     $$
   H_j(K,\partial^{\mu}) \cong \begin{cases}
   \Z ,  &  \text{if $j=0$}; \\
   0 ,  &  \text{if $j \geq 1$}.
 \end{cases}
 $$
   \end{prop}
   
   The proofs of Theorem~\ref{Thm:Contig} and Proposition~\ref{Prop:Contrac-WSC} in~\cite{Daw90} are for
   ascending weighted simplicial complexes. 
   But the parallel proofs for descending weighted simplicial complexes can be written down analogously.\nn
   
  Suppose $K$ is a simplicial complex and $L$ is a subcomplex of $K$. A \emph{simplicial retraction} from $K$ to $L$ is a simplicial map $r: K\rightarrow L$
  whose restriction to $L$ is the identity map $\mathrm{id}_L$ of $L$. So if $i_L: L\hookrightarrow K$ be the inclusion map, then $r\circ i_L = \mathrm{id}_L$. The following corollary follows directly from Theorem~\ref{Thm:Contig}.

  \begin{cor}\label{Cor:Retract-Weight-Complex}
    Let $(K,\mu)$ be a weighted simplicial complex and $L$ be a simplicial subcomplex of $K$.  
    Suppose there is a simplicial retraction $r: K\rightarrow L$ such that 
    \begin{itemize}
    \item $r: (K,\mu)\rightarrow (L,\mu|_L)$ is a morphism;\n
    \item $i_L \circ r :(K,\mu)\rightarrow (K,\mu)$ is contiguous to $\mathrm{id}_K$.
    \end{itemize}
     Then 
    $r_*: H_*(K,\mu)\rightarrow H_*(L,\mu|_{L})$
    is an isomorphism.    
  \end{cor}
  
  We can readily think of $(L,\mu|_{L})$ in the above corollary as the
  deformation retract of $(K,\mu)$ in the category
  of weighted simplicial complexes.
  
\subsection{Some notations of simplicial complexes}
  \ \n
   Let $K$ be a simplicial complex. We fix some notations for our discussions in the rest of the paper. 
 \begin{itemize}
  \item For any $n\geq 0$, denote by $K^{(n)}$ the $n$-skeleton of $K$. In particular, $K^{(0)}$ is the vertex set of $K$.\n

   \item  Let $|K| \subseteq \R^N$ denote a \emph{geometrical realization} of $K$ where
  each simplex $\sigma\in K$ determines a geometric simplex $|\sigma|$ in $|K|$. So
   $ |K| = \bigcup_{\sigma\in K} |\sigma|$.\n

    \item For any simplex $\sigma$ of $K$, let
      $\mathrm{star}_K(\sigma)\subseteq K$ denote the 
     \emph{star} of $\sigma$ in $K$, i.e.
      $$ \mathrm{star}_K(\sigma) = \{ \tau\in K \,|\, \tau\cup\sigma\in K\}. $$    
    Besides, let $\mathrm{link}_K(\sigma)\subseteq K$ denote the 
     \emph{link} of $\sigma$ in $K$, i.e.
     $$ \mathrm{link}_K(\sigma) = \{ \tau\in K \,|\, \tau\cup\sigma\in K, \tau\cap\sigma=\varnothing\}. $$

     \item  For any simplex $\sigma$ of $K$, let the \emph{open star} of $\sigma$ in $|K|$ be  
     $$ \mathrm{St}(\sigma,K)=\bigcup_{\sigma\subseteq\tau} |\tau|^{\circ},$$ 
          where $|\tau|^{\circ}$ is the relative interior of $|\tau|$. Moreover, let $\overline{\mathrm{St}}(\sigma,K)$ denote the \emph{closure} of $\mathrm{St}(\sigma,K)$ in $|K|$. Then we have
          $$ \overline{\mathrm{St}}(\sigma,K) = |\mathrm{star}_K(\sigma)|, \ \ 
          \mathrm{St}(\sigma,K) = |\mathrm{star}_K(\sigma)| \backslash |\mathrm{link}_K(\sigma)|.  $$

    \item For any subcomplex $L$ of $K$, the
    \emph{simplicial neighborhood} of $L$ in $K$ is
     \begin{equation} \label{Equ:N-L-K}
       N(L,K):=\bigcup_{\sigma\in L} \mathrm{star}_K(\sigma)=\bigcup_{v\in L^{(0)}} \mathrm{star}_K(v) \subseteq K. 
       \end{equation}
     In addition, let  (see Figure~\ref{p:W-filtration} for example)
      \begin{equation} \label{Equ:S-L-K} 
      S(L,K):= \{ \sigma \in N(L,K) \,|\, \sigma\cap L=\varnothing \}. 
      \end{equation}
        Note that any simplex in $N(L,K)$ is the join of a simplex of $L$
  with some simplex of $S(L,K)$.\n
  
  \item Let $U(L,K)=|N(L,K)|\backslash |S(L,K)|$ which is an open neighborhood of $|L|$ in $|K|$. 
  More specifically,
 \begin{equation} \label{Equ:U-L-K}
   U(L,K) = \bigcup_{\sigma\in L} \mathrm{St}(\sigma, K)=\bigcup_{v\in L^{(0)}} \mathrm{St}(v, K).
   \end{equation}

     \item For any point $x\in |K|$, let $\mathrm{Car}_K(x)$ denote the unique simplex of $K$ such that $x$ is contained in the relative interior $|\mathrm{Car}_K(x)|^{\circ}$ of $|\mathrm{Car}_K(x)|$. Then 
     $$x\in \mathrm{St}(v,K) \Longleftrightarrow v\in \mathrm{Car}_K(x).$$ 
     
       \item For a simplicial map $\varrho: K\rightarrow L$, let
     $\overline{\varrho}: |K| \rightarrow |L|$ denote the continuous map determined by $\varrho$. \n

     \end{itemize}

  \begin{lem} \label{Lem:Retraction}
  If $L$ is a subcomplex of a simplicial complex $K$, then there is a piecewise linear deformation retraction 
   from $|N(L,K)|\backslash |L|$
   onto $|S(L,K)|$.
  \end{lem}
  \begin{proof}
   We can check this in the relative interior of each simplex of $N(L,K)$. Let
   $\sigma=\tau * \tau'$ be the join of a simplex
   $\tau$ of $L$ with a simplex
   $\tau'$ of $S(L,K)$. There is a canonical
   deformation retraction from $|\sigma|^{\circ}$
   onto $|\tau'|^{\circ}$ along
   the rays emitted from the points of $|\tau|^{\circ}$ (see Figure~\ref{p:Radial-Retraction}).
  These deformation retractions in all the simplices of $N(L,K)$
   fitted together define a piecewise linear deformation retraction 
   from $|N(L,K)|\backslash |L|$ onto $|S(L,K)|$.
    \end{proof}
   \begin{figure}[h]
         \includegraphics[width=0.18\textwidth]{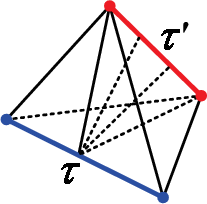}\\
          \caption{A linear retraction}\label{p:Radial-Retraction}
      \end{figure}

 \section{AW-homology and DW-homology of an orbifold}
   \label{Sec:Homology-Theory}

   Let $\mathcal{M}=(M,\mathcal{U})$ be an orbifold. It is well known that (see Goresky~\cite{Gor78} and Verona~\cite{Veron84}) there exists a triangulation $\mathcal{T}$ of $M$ such that the closure of every stratum of $M$ is a simplicial subcomplex of $\mathcal{T}$. Then the relative interior of each simplex of $\mathcal{T}$ is contained in a single stratum of $M$. A detailed proof of this result can be found in Choi~\cite[Section\,4.5]{Choi12}.
By replacing $\mathcal{T}$ by a stellar subdivision, one can assume that
the cover of closed simplices in $\mathcal{T}$ refines the cover of $M$ induced by the atlas $\mathcal{U}$.
For a simplex $\sigma$ in such a triangulation, the isotropy groups of all the interior points
of $\sigma$ are the same, and are subgroups of the isotropy groups of the boundary
points of $\sigma$. By taking a further subdivision of $\mathcal{T}$, we may assume that for any simplex $\sigma$ in $\mathcal{T}$,

\begin{itemize}
 \item[($\scalebox{1.3}{$\star$}$)] there is
one face $\tau$ of $\sigma$ such that the local group is constant on $\sigma\backslash \tau$, and possibly larger
on $\tau$. 
\end{itemize}

We call such a triangulation $\mathcal{T}$ \emph{adapted to $\mathcal{U}$} (see~\cite{MoePro99}). By abuse of notation, we also use $\mathcal{T}$
to refer to the simplicial complex defined by $\mathcal{T}$.  The property ($\scalebox{1.3}{$\star$}$) of $\mathcal{T}$ is crucial for us to define weighted simplicial homology of an orbifold.  
  \n

  \begin{prop}[{\cite[Proposition 1.2.1]{MoePro99}}]
  For any orbifold $\mathcal{M} = (M,\mathcal{U})$, there
  always exists an adapted triangulation.
\end{prop}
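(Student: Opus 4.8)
The plan is to construct the adapted triangulation in three stages, taking as given the geometric input that $M$ admits \emph{some} triangulation compatible with its natural stratification. First I would invoke \cite{Gor78,Veron84} (see also \cite[\S 4.5]{Choi12}) to obtain a triangulation $\mathcal{T}_0$ of $M$ subordinate to the natural stratification, so that the relative interior of every simplex of $\mathcal{T}_0$ lies in a single stratum and hence carries a constant local group. Next, by performing stellar subdivisions I would arrange that the closed simplices of the resulting triangulation $\mathcal{T}_1$ refine the cover $\{\varphi(\tilde{U})\}$ of $M$ induced by the atlas $\mathcal{U}$; since any subdivision of a triangulation subordinate to the stratification is again subordinate, each open simplex of $\mathcal{T}_1$ still lies in a single stratum. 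It then remains to produce property $(\star)$ by one final subdivision.

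For the last stage I would pass to the barycentric subdivision $\mathcal{T} := \mathrm{sd}\,\mathcal{T}_1$ and claim that it is adapted. A simplex of $\mathrm{sd}\,\mathcal{T}_1$ has the form $\sigma' = [\hat{\sigma}_0,\dots,\hat{\sigma}_k]$, where $\hat{\sigma}_i$ is the barycenter of a simplex $\sigma_i$ of $\mathcal{T}_1$ and $\sigma_0 < \sigma_1 < \cdots < \sigma_k$ is a flag of proper faces. Because the local group on the relative interior of a simplex is a subgroup of the local groups along its boundary, passing to a smaller face can only enlarge the local group; along the flag this yields the chain of inclusions $G_{\sigma_0} \supseteq G_{\sigma_1} \supseteq \cdots \supseteq G_{\sigma_k}$, so the top simplex $\sigma_k$ carries the smallest local group among the $\sigma_i$. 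I would then single out the facet $\tau := [\hat{\sigma}_0,\dots,\hat{\sigma}_{k-1}]$ opposite the vertex $\hat{\sigma}_k$ and check that $(\star)$ holds with this $\tau$.

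The heart of the matter is the elementary observation that every point of $\sigma' \setminus \tau$ lies in the relative interior of $\sigma_k$. Indeed such a point is a convex combination $\sum_{i=0}^{k} t_i \hat{\sigma}_i$ with $t_k > 0$; since $\hat{\sigma}_k$ has all its barycentric coordinates in $\sigma_k$ equal to a positive constant while each remaining $\hat{\sigma}_i$ (being the barycenter of a face of $\sigma_k$) contributes non-negatively, every barycentric coordinate of the point with respect to $\sigma_k$ is strictly positive. Hence on $\sigma' \setminus \tau$ the local group is constantly $G_{\sigma_k}$, while on $\tau$ it can only be larger, since $\tau$ meets the more singular faces $\sigma_0,\dots,\sigma_{k-1}$; this is precisely $(\star)$ for $\sigma'$. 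As the argument is uniform over all simplices of $\mathrm{sd}\,\mathcal{T}_1$, the triangulation $\mathcal{T}$ satisfies $(\star)$, and since barycentric subdivision preserves both the subordinate condition and the refinement of the atlas cover, $\mathcal{T}$ is adapted.

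I expect the one genuine obstacle to lie entirely in the first stage: the existence of a triangulation subordinate to the natural stratification is a nontrivial theorem about stratified spaces, which I would cite rather than reprove. Granting it, the passage to $(\star)$ is a clean combinatorial argument whose only delicate point is keeping track of the direction of the local-group inclusions along a flag.
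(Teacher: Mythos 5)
Your proposal is correct and follows essentially the same route as the paper, which does not prove this proposition itself but cites Choi (Theorem 4.5.4) after sketching exactly your three stages: a stratification-subordinate triangulation via Goresky--Verona, a stellar subdivision so that closed simplices refine the atlas cover, and a further subdivision to obtain property $(\star)$. Your barycentric-subdivision argument correctly supplies the detail that the paper leaves to the citation, getting both the direction of the inclusions $G_{\sigma_0}\supseteq\cdots\supseteq G_{\sigma_k}$ along a flag and the choice of $\tau$ as the facet opposite $\hat{\sigma}_k$ right.
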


Note that any simplex $\sigma$ in a triangulation $\mathcal{T}$ adapted to $\mathcal{M}$ will have a vertex $v \in \sigma$ with the minimal
local group, i.e. $G_v \subseteq G_x$, for all $x \in \sigma$. Let
  \begin{equation} \label{Equ:weight-orbifold}
     \mathbf{w}(\sigma) = \mathrm{min}\{ |G_{v}|\,;\, v
      \ \text{is a vertex of}\  \sigma \}.
  \end{equation} 
  
    \begin{lem} \label{Lem:vertex_order}
  For any simplex $\sigma$ in a triangulation $\mathcal{T}$ adapted to $\mathcal{M}$, we can order the vertices of $\sigma$ 
   to be $\{v_0,\cdots,v_k\}$ so that 
   $\mathbf{w}(\sigma)=\mathbf{w}(v_0) \mid  \cdots \mid \mathbf{w}(v_k)$. So for any face $\sigma'$ of 
  $\sigma$ in $\mathcal{T}$, we have $\mathbf{w}(\sigma) \mid \mathbf{w}(\sigma')$.
   \end{lem}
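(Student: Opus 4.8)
The plan is to argue by induction on the dimension $k$ of the simplex $\sigma$. When $k=0$ the simplex is a single vertex and the statement is trivial, so suppose $k\geq 1$ and that the assertion holds for every simplex of $\mathcal{T}$ of dimension $<k$. Applying the defining property~$(\star)$ of the adapted triangulation to $\sigma$, we obtain a face $\tau$ of $\sigma$ on which the local group may be strictly larger, while the local group takes a constant value $H$ on $\sigma\setminus\tau$. If the local group is in fact constant on all of $\sigma$, then every vertex of $\sigma$ has the same weight $|H|=\mathbf{w}(\sigma)$ and any ordering works; so we may assume $\tau$ is a proper face of $\sigma$, whence $\dim\tau<k$.

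Next I would invoke the induction hypothesis on $\tau$ to order its vertices as $u_0,\dots,u_j$ with $\mathbf{w}(u_0)\mid\cdots\mid\mathbf{w}(u_j)=\mathbf{w}(\tau)$. The remaining vertices of $\sigma$ all lie in $\sigma\setminus\tau$ and hence have local group exactly $H$, so each has weight $|H|$. The crucial point is that, by the construction of an adapted triangulation recalled above, the isotropy group of an interior point of $\sigma$ is a genuine subgroup of the isotropy group of every boundary point; in particular $H\subseteq G_{u_0}$ since $u_0\in\tau$. By Lagrange's theorem this gives $|H|\mid\mathbf{w}(u_0)$. I would therefore list the vertices of $\sigma$ by first taking those in $\sigma\setminus\tau$ (each of weight $|H|$) and then $u_0,u_1,\dots,u_j$, which produces the divisibility chain
\[
|H|\mid\cdots\mid|H|\mid\mathbf{w}(u_0)\mid\cdots\mid\mathbf{w}(u_j).
\]
Since $|H|\leq\mathbf{w}(u_0)\leq\mathbf{w}(\tau)$, the maximal vertex weight of $\sigma$ equals $\mathbf{w}(\tau)=\mathbf{w}(u_j)$, so by~\eqref{Equ:weight} the top of the chain is exactly $\mathbf{w}(\sigma)$, as required.

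For the final assertion about faces, I would observe that in any divisibility chain of positive integers $\mathbf{w}(v_0)\mid\cdots\mid\mathbf{w}(v_k)$ the terms are non-decreasing. Hence for a face $\sigma'$ of $\sigma$, whose vertex set is a subset of $\{v_0,\dots,v_k\}$, the weight $\mathbf{w}(\sigma')=\max\{\mathbf{w}(v)\mid v\in\sigma'\}$ is attained at the vertex of $\sigma'$ of largest index, say $v_i$, and therefore $\mathbf{w}(\sigma')=\mathbf{w}(v_i)\mid\mathbf{w}(v_k)=\mathbf{w}(\sigma)$.

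The main obstacle is bookkeeping rather than conceptual: one must ensure that property~$(\star)$ supplies honest subgroup inclusions (not merely inclusions of conjugacy classes) between the local groups attached to $\sigma$ and to its faces, so that the orders divide via Lagrange's theorem, and one must correctly dispose of the degenerate case in which the local group is already constant on $\sigma$ (so that $\tau$ need not be a proper face). Once these points are settled, the induction and the face statement follow mechanically.
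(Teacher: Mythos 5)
Your proof is correct and takes essentially the same route as the paper's: both arguments peel off the face $\tau$ supplied by property $(\star)$ and recurse, the paper phrasing this as an explicit iteration that produces a chain of subgroup inclusions $G_{v_0}\subseteq\cdots\subseteq G_{v_k}$, while you package the same recursion as an induction on dimension and convert the single inclusion $H\subseteq G_{u_0}$ into divisibility via Lagrange's theorem. The deduction of $\mathbf{w}(\sigma')\,|\,\mathbf{w}(\sigma)$ for a face $\sigma'$ is handled identically in both.
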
 
   \begin{proof}
   If the local group of $\mathcal{M}$ on the simplex $\sigma$ is constant, the lemma clearly holds.   
    Otherwise, there exists a proper face $\tau$ of $\sigma$ such that
     the local group is constant on $\sigma\backslash \tau$, and larger on $\tau$.  Let $\{v_0,\cdots, v_{s-1}\}$ be all the vertices of $\sigma\backslash \tau$ and $\{v_{s},\cdots, v_k\}$ be all the vertices of $\tau$. If the local group is constant on $\tau$, then 
     we have 
     $G_{v_0}=\cdots = G_{v_{s-1}}\subsetneq G_{v_s} = \cdots = G_{v_k}$.     
     Otherwise,  there exists a proper face $\tau'$ of $\tau$ such that
     the local group is constant on $\tau\backslash \tau'$, and larger on $\tau'$. By reordering the vertices of $\tau$ if necessary, we can assume that $v_{s},\cdots, v_{s'}$ are all the vertices of $\tau\backslash \tau'$ where $s \leq s'< k$. So
      $G_{v_0}=\cdots = G_{v_{s-1}}\subsetneq G_{v_{s}} = \cdots = G_{v_{s'}}$. By iterating the above argument, we can order the vertices of $\sigma$ to be 
      $\{v_0,\cdots,v_k\}$ so that $G_{v_0}\subseteq \cdots \subseteq G_{v_k}$. Then we have 
      $\mathbf{w}(\sigma)=\mathbf{w}(v_0) \mid  \cdots \mid \mathbf{w}(v_k)$ since $v_0$ is the vertex with the minimal local group. \n
      
      If $\sigma'$ is a face of $\sigma$, then the vertex set of $\sigma'$ is a subset of $\{v_0,\cdots,v_k\}$, say
      $\{v_{i_0},\cdots, v_{i_s}\}$, $0\leq i_0 <\cdots < i_s \leq k$. Then
      $ \mathbf{w}(\sigma)=\mathbf{w}(v_0) \mid \mathbf{w}(v_{i_0})=\mathbf{w}(\sigma')$.       
   \end{proof}

   By Lemma~\ref{Lem:vertex_order}, $\mathbf{w}$ is a 
   descending weight function on $\mathcal{T}$ and
   $\mathbf{w}$ is divisible. Let
   $\widehat{\mathbf{w}}$ be the inversion of $\mathbf{w}$ (see~\eqref{Equ:Weight-cond-Des}) which is an ascending weight on $\mathcal{T}$.
From Lemma~\ref{Lem:vertex_order}, we can easily see that:
 for any simplex $\sigma$ in $\mathcal{T}$,
 \begin{equation} \label{Equ:weight-des}
     \widehat{\mathbf{w}}(\sigma) = \mathrm{max}\{ |G_{v}|\,;\, v \ \text{is a vertex of}\  \sigma \}.
  \end{equation} 
      
\begin{defi} \label{Def:AW-DW-Orbifold}
For an orbifold $\mathcal{M}=(M,\mathcal{U})$ with an 
adapted triangulation $\mathcal{T}$,
      \begin{itemize}
  \item We call $H_*(\mathcal{T},\partial^{ \widehat{\mathbf{w}}})$ the \emph{AW-homology} of $\mathcal{M}$, denoted by $H^{AW}_*(\mathcal{M})$.\n
    
  \item We call $H_*(\mathcal{T},\partial^{\mathbf{w}})$ the  \emph{DW-homology} of $\mathcal{M}$, denoted by $H^{DW}_*(\mathcal{M})$.\n
    \end{itemize}
    
Here ``AW-homology'' and ``DW-homology'' are the abbreviations for \emph{ascending weighted homology} and 
  \emph{descending weighted homology}, respectively.
\end{defi} 
     
   We will prove in Section~\ref{Sec:Invariance} that
   $H^{AW}_*(\mathcal{M})$ and $H^{DW}_*(\mathcal{M})$ 
   are independent on the adapted
   triangulation $\mathcal{T}$ (see Corollary~\ref{Cor:Indepen}). An important thing here is that one should use divisibly weighted simplices (not arbitrary simplices) as the building blocks of an orbifold when computing the AW-homology and DW-homology. \n
      
       For any point $x\in M$, there exists a unique simplex in $\mathcal{T}$, denoted by $\mathrm{Car}_{\mathcal{T}}(x)$, which contains $x$ in its relative interior.

\begin{lem} \label{Lem:Weight-Equal}
  For any point $x\in M$, we have 
   $|G_x| = \mathbf{w}(\mathrm{Car}_{\mathcal{T}}(x))$, i.e. $|G_x|$ is equal to the smallest weight of the vertices of $\mathrm{Car}_{\mathcal{T}}(x)$. 
   
\end{lem}
\begin{proof}
 Let the vertex set of 
 $\mathrm{Car}_{\mathcal{T}}(x)$ be $\{v_0,\cdots, v_k\}$
 where $\mathbf{w}(v_0)\, |\, \cdots\, | \, \mathbf{w}(v_k)$. If the local group is constant on $\mathrm{Car}_{\mathcal{T}}(x)$, the lemma clearly holds.
 Otherwise, by the property ($\scalebox{1.3}{$\star$}$) of $\mathcal{T}$, there exists a proper face $\tau$ of 
 $\mathrm{Car}_{\mathcal{T}}(x)$ such that
 the local group is constant on $\mathrm{Car}_{\mathcal{T}}(x)\backslash \tau$. This implies $v_0\notin \tau$. On the other hand,  since $x$ is in the relative interior of $\mathrm{Car}_{\mathcal{T}}(x)$, we also have
 $x\notin \tau$. Hence $|G_x| = \mathbf{w}(v_0)= \mathbf{w}(\mathrm{Car}_{\mathcal{T}}(x))$ by the definition of $\mathbf{w}$ in~\eqref{Equ:weight-orbifold}. 
\end{proof}      

 Lemma~\ref{Lem:Weight-Equal} implies that
  the descending weight $\mathbf{w}$ 
  is a natural choice for the study of an orbifold 
  (also see Example~\ref{Exam:orbifold-descending}). 
  Moreover, Lemma~\ref{Lem:Weight-Equal} tells us that
      the order of the local group of each point of an orbifold
      is determined by the weights of the vertices of
      a triangulation adapted to the orbifold. This property motivates us to define the notion of weighted polyhedron in the next section.

\section{Weighted polyhedron} \label{Sec:Pseudo-Orbi}
   \begin{defi}[Weighted Space]
    A \emph{weighted space} is a pair $(X,\lambda)$ where
  $X$ is a topological space and $\lambda: X\rightarrow \Z_+$ is a function
  from $X$ to the set of positive integers $\Z_+$.
  \n
  Moreover, two weighted spaces $(X,\lambda)$ and $(X',\lambda')$ are called \emph{isomorphic} if there exists a homeomorphism $f: X\rightarrow X'$ such that $f$ is weight-preserving, i.e. $\lambda'(f(x))=\lambda(x)$ for any $x\in X$.\nn
  
   Note that if $X$ is connected,
  $\lambda$ is a continuous function if and only if it is constant. So in most cases, $\lambda$ is not a continuous function.
   \end{defi}

  The \emph{Cartesian product of two weighted spaces} 
 $(X,\lambda)$ and $(X',\lambda')$ is a weighted space
 $(X\times X', \lambda\times \lambda')$ where
 $(\lambda\times \lambda')(x,x') = \lambda(x)\cdot\lambda'(x')$
 for all $x\in X$, $x'\in X'$.

  \begin{defi}[Geometrical Realization of Weighted Simplicial Complex] \label{Def:Realization-WSC}\ \n
  Any weighted simplicial complex $(K,\mu)$ determines
  a map $\lambda_{\mu}: |K| \rightarrow \Z_+$ by:
      \begin{equation} \label{Equ:lambda-mu-relation}
         \lambda_{\mu} (x) = \mu(\mathrm{Car}_K(x)), \ \forall  
      x\in |K|.
      \end{equation}
     
     We call $(|K|,\lambda_{\mu})$ the \emph{geometrical realization} of $(K,\mu)$.
  \end{defi}

  \begin{exam}
  If $(K,\xi)$ is a divisibly weighted simplicial complex, we obtain two weighted spaces $(|K|,\lambda_{\xi})$ and $(|K|,\lambda_{\widehat{\xi}})$,
  where $\widehat{\xi}$ is the inversion of $\xi$ (see~\eqref{Equ:Weight-cond-Des}).
  \end{exam}

     The definition below is inspired by
 the property of triangulations adapted to an orbifold proved in Lemma~\ref{Lem:Weight-Equal}.

  \begin{defi}[Weighted Polyhedron] \label{Def:Weighted-Polyhedron} 
    A weighted space $(X,\lambda)$ is called a 
     \emph{weighted polyhedron} if there exists a
     weighted simplicial complex $(K,\mu)$ such that $(X,\lambda)$ is isomorphic
    to $(|K|,\lambda_{\mu})$, and
    we call $(K,\mu)$  a \emph{weighted triangulation} of $(X,\lambda)$. 
    Moreover, if the weight function $\mu$ is divisible,
    we call $(K,\mu)$ a \emph{divisibly weighted triangulation}. In addition, we say that the \emph{type} of weighted polyhedron $(X,\lambda)$ is 
 \emph{ascending} (or \emph{descending}) 
    if its weighted triangulation is an ascending (or descending) weighted simplicial complex.
\end{defi}

 A weighted polyhedron $(X,\lambda)$ is called \emph{compact}
 if the space $X$ is compact. If $(X,\lambda)$ is compact, then the range of $\lambda$ is a finite set. Let
 \begin{equation}\label{Equ:N-lambda}
 N_{\lambda}:= \ \text{the least common multiple of all the integers in} \ \{\lambda(x),x\in X\}. 
  \end{equation}
 Similarly to Definition~\ref{Def:Adjoint-Wt}, define
 \begin{equation}\label{Equ:adjoint-lambda}
 \lambda^*: X\rightarrow \Z_+, \ x\mapsto \frac{N_{\lambda}}{\lambda(x)}.
  \end{equation}
 We call $(X,\lambda^*)$ the \emph{adjoint} of $(X,\lambda)$. It is clear that if $(K,\mu)$  is a weighted triangulation of $(X,\lambda)$, then 
 $(K,\mu^*)$ is a weighted triangulation of $(X,\lambda^*)$. So $(X,\lambda^*)$ and $(X,\lambda)$ are of different types. \nn
 
 \begin{itemize}
\item[\textbf{(CV-7)}]  In the rest of the paper, we
always 
 assume that a weighted polyhedron is compact
 if not specified otherwise. 
\end{itemize}
\n

Moreover, in the definition of
 weighted polyhedron, we can actually require the weight
 function $\mu$ on $K$ to be divisible. 
 This is suggested by the following lemma.

 \begin{lem} \label{Lem:Barycen-Sub-Iso-Weight}
 For any weighted simplicial complex $(K,\mu)$, the
 weighted spaces $(|K|,\lambda_{\mu})$ and
 $\big(|Sd(K)|, \lambda_{Sd(\mu)}\big)$ are isomorphic.
\end{lem}
\begin{proof}
 For any nonempty simplex $\sigma$ of $K$, we can write $|\sigma|^{\circ}$ as a union
 $$ |\sigma|^{\circ} =  \underset{\sigma_0\subsetneq \cdots\subsetneq \sigma_l =\sigma}{\bigcup_{\{b_{\sigma_0}\cdots b_{\sigma_l}\}\in Sd(K)}} |\{b_{\sigma_0}\cdots b_{\sigma_l}\}|^{\circ}. $$
 
 For any point $x\in |\sigma|^{\circ}$, it follows from Definition~\ref{Def:Realization-WSC} of $\lambda_{\mu}$ that
$$\lambda_{\mu}(x) =  \mu(\sigma) = \lambda_{\mu}(b_{\sigma}). $$

 Meanwhile, $x$ must lie in the relative interior 
 of some simplex $\{ b_{\sigma_0}\cdots b_{\sigma_l}\}$
 of $Sd(K)$
 with $\sigma_0\subsetneq \cdots\subsetneq \sigma_l =\sigma$. So by the definition of $Sd(\mu)$ (see~\eqref{Equ:Subdiv-w}), 
 $$  \lambda_{Sd(\mu)} (x) = Sd(\mu)\big(\{b_{\sigma_0}\cdots b_{\sigma_l}\}\big) = \mu(\sigma_l) = \mu(\sigma). $$ 
 So the two weighted spaces $(|K|,\lambda_{\mu})$ and
 $\big(|Sd(K)|, \lambda_{Sd(\mu)}\big)$ are isomorphic.
\end{proof}

By Definition~\ref{Def:Bary-Subdiv-Weighted}, the barycentric subdivision of a weighted simplicial complex
 is always a divisibly weighted simplicial complex. So we obtain the follow corollary immediately from Lemma~\ref{Lem:Barycen-Sub-Iso-Weight}.
 
\begin{cor}\label{Cor:Div-Weight-Triangl}
 Any weighted polyhedron
   has a divisibly weighted triangulation. 
\end{cor}

\begin{lem} \label{Lem:Product-Weighted-Poly}
Suppose
 $(X,\lambda)$ and $(X',\lambda')$ are two weighted polyhedra of the same type, then their Cartesian product
 $(X\times X', \lambda\times \lambda')$ is also
 a weighted polyhedron with the same type as
 $(X,\lambda)$ and $(X',\lambda')$.
 \end{lem}
 \begin{proof}
 Let $(K,\mu)$ and $(K',\mu')$ be weighted triangulations of $(X,\lambda)$ and  $(X',\lambda')$, respectively.
 We claim that the Cartesian product $(K\times K',\mu\times \mu')$
 is a weighted triangulation of $(X\times X', \lambda\times \lambda')$. Indeed,
 suppose a point $(x,x')\in K\times K'$ is carried by
 a simplex $
   \{(v_{i_1},v'_{j_1}),\cdots, (v_{i_s},v'_{j_s})\}$ of $K\times K'$
  where
  $\{v_{i_1},\cdots, v_{i_s}\}$ is a simplex in $K$
  and $\{v'_{j_1},\cdots, v'_{j_s}\}$ is a simplex in $K'$.
  Then $x$ is carried by $\{v_{i_1},\cdots, v_{i_s}\}$
  while $x'$ is carried by $\{v'_{j_1},\cdots, v'_{j_s}\}$.
  So by our definitions, 
\begin{align*}
  \mu\times\mu'\big(\{(v_{i_1},v'_{j_1}),\cdots, (v_{i_s},v'_{j_s})\}\big) &\overset{\eqref{Equ:Product-Weight}}{=} \mu(\{v_{i_1},\cdots, v_{i_s}\})\cdot \mu'(\{v'_{j_1},\cdots, v'_{j_s}\}) \\
   &\overset{\eqref{Equ:lambda-mu-relation}}{=} \lambda(x)\lambda'(x') = \lambda\times\lambda'((x,x')).
\end{align*}  
 Clearly, $(K\times K',\mu\times \mu')$ has the same type as $(K,\mu)$ and $(K',\mu')$ by definition.
   \end{proof} 

\begin{exam} \label{Exam:orbifold-descending}
   Any orbifold $\mathcal{M}=(M,\mathcal{U})$ canonically defines a weighted space $(M,\lambda_{\mathcal{M}})$ where
   for any point $x$ in $M$,
   $$\lambda_{\mathcal{M}}(x) =\text{the order $|G_x|$ of the local group $G_x$ of}\ x.$$
   
    We claim that $(M,\lambda_{\mathcal{M}})$ is a weighted polyhedron. Indeed, we can first take an adapted triangulation $\mathcal{T}$ of $\mathcal{M}$ along with the induced descending weight $\mathbf{w}$ defined 
   by~\eqref{Equ:weight-orbifold}.  By Definition~\ref{Def:Realization-WSC} and Lemma~\ref{Lem:Weight-Equal}, we have
       \[ \lambda_{\mathbf{w}}(x)= \mathbf{w}(\mathrm{Car}_{\mathcal{T}}(x)) = |G_x| =
       \lambda_{\mathcal{M}}(x), \ \forall x\in M .\]
   Therefore, $(M,\lambda_{\mathcal{M}})$ is isomorphic to $(|\mathcal{T}|,\lambda_{\mathbf{w}})$ as weighted spaces. Hence $(\mathcal{T},\mathbf{w})$
   is a divisibly weighted triangulation of
   $(M,\lambda_{\mathcal{M}})$.  
 \end{exam}

  The above example shows that
  an orbifold is naturally a weighted polyhedron of descending type. So it suggests us to consider
  descending type weighted polyhedra as the generalization of orbifolds below. 
   
 \begin{defi}[Pseudo-orbifold]
 A weighted polyhedron $(X,\lambda)$ is called a \emph{pseudo-orbifold} if it is of descending type and
 $ X_{reg}=\{x \in X \mid \lambda(x)=1\}$ is a dense open subset
 of $X$. We call any point in $X_{reg}$ a \emph{regular point} of $(X,\lambda)$, and any point in $X \backslash X_{reg}$ a \emph{singular point}.
\end{defi}

 \begin{defi}[W-continuous map] \label{Def:W-cont-map}
   Suppose $(X,\lambda)$ and $(X',\lambda')$ are
   ascending (or descending) type weighted polyhedra. 
   A continuous map $f:X\rightarrow X'$ is called a \emph{W-continuous map} from
   $(X,\lambda)$ to $(X',\lambda')$ if 
   $$ \lambda'(f(x)) \mid \lambda(x) \ \big(\text{or}\ \lambda(x) \mid \lambda'(f(x)) \big),\ \text{for all}\
    x\in X. $$  
    We also denote such a map by $f: (X,\lambda)\rightarrow (X',\lambda')$ to indicate its W-continuity.
    Note that the definition of a W-continuous map depends
on the type of the weighted polyhedra involved.
Moreover, $f$ is called \emph{weight-preserving} if 
$$\lambda'(f(x))=\lambda(x);\ \text{for all}\ x\in X.$$

We say that $(X,\lambda)$ is \emph{isomorphic} to $(X',\lambda')$ if there exists a weight-preserving homeomorphism from $(X,\lambda)$ to $(X',\lambda')$.  
 \end{defi}

  \begin{exam} \label{Exam:Geo-Realiztion}
    If $\varrho: (K,\mu)\rightarrow (K',\mu')$ is a
   morphism between two weighted simplicial complexes of the same type, then the induced map on the weighted polyhedra $\overline{\varrho}: (|K|,\lambda_{\mu})\rightarrow
 (|K'|,\lambda_{\mu'})$ is W-continuous, called the \emph{geometrical realization} of $\varrho$.  
 In particular, if $\varrho$ is weight-preserving, then
 so is $\overline{\varrho}$. 
 \end{exam}
  
    \begin{defi}[W-Homotopy]\label{Def:W-homotopy-Equiv} Let $(X,\lambda)$ and $(X',\lambda')$ be 
    weighted polyhedra of the same type and
       $f, g: (X,\lambda)\rightarrow (X',\lambda')$ be two  
     W-continuous maps. A \emph{W-homotopy} from $f$ to $g$ relative
to $A\subseteq X$ is a W-continuous map
$$H : (X \times [0, 1],\lambda\times \mathbf{1}) \rightarrow (X',\lambda')$$ which satisfies:
 \begin{itemize}
   \item $H(x, 0) =f(x)$ for all $x\in X$;\n
   \item $H(x,1)=g(x)$ for all $x\in X$;\n
   \item $H(a,t)=f(a)=g(a)$ for all $a\in A$ and 
   $t\in [0,1]$.\n
   \end{itemize}
   
   Here $([0,1],\mathbf{1})$ is considered to be a weighted polyhedron, and $(X\times [0,1],\lambda\times \mathbf{1})$ is the product $(X,\lambda)$ with
   $([0,1],\mathbf{1})$.
   \n
   If there exists a W-homotopy from $f$ to $g$,
   we say that $f$ is \emph{W-homotopic to} $g$.
   If moreover $H$ is weight-preserving, we say
   that $f$ is \emph{strongly W-homotopic to} $g$ (a priori $f$ and $g$ are both weight-preserving).
    \end{defi}
    
    The following lemma is obvious from
    the above definition.
     
     \begin{lem}
    If two morphisms $\varrho_0,\varrho_1: (K,\mu)\rightarrow (K',\mu')$ of weighted simplicial complexes are (strongly) contiguous, then the maps $\overline{\varrho}_0,
     \overline{\varrho}_1 : (|K|,\lambda_{\mu})\rightarrow
 (|K'|,\lambda_{\mu'})$ are (strongly) W-homotopic.
 \end{lem}
    
   \begin{rem}
    ``W-continuous map'' and ``W-homotopy''
   have been defined by Takeuchi and Yokoyama~\cite[Sec.\,5]{TakYok07} for orbifolds which require all the maps to be weight-preserving. Here we redefine these two terms for the purpose of our study.
    \end{rem}
    
  \begin{defi}[W-Homotopy Equivalence] \label{Def:W-Homo-Equiv} 
  Two weighted polyhedra $(X,\lambda)$ and $(X',\lambda')$ of the same type are called \emph{W-homotopy equivalent} if there exist W-continuous maps
  $$f: (X,\lambda)\rightarrow (X',\lambda'), \ \ 
  g : (X',\lambda')\rightarrow (X,\lambda)$$
such that $g \circ f$ and $f \circ g$
 are W-homotopic to $\mathrm{id}_{X}$ and 
 $\mathrm{id}_{X'}$, respectively. The map $f$ is called a
 \emph{W-homotopy equivalence} from $(X,\lambda)$ to $(X',\lambda')$, and $g$ is called a \emph{W-homotopy inverse} of $f$.  Note that the definition of a W-homotopy equivalence depends
on the type of the weighted polyhedra involved. \n
 Moreover,
  $(X,\lambda)$ and $(X',\lambda')$ are called 
  \emph{strongly W-homotopy equivalent} if in the above definition,  $f$ and $g$ are both weight-preserving, and $g \circ f$ and $f \circ g$
 are strongly W-homotopic to $\mathrm{id}_{X}$ and 
 $\mathrm{id}_{X'}$, respectively. If so, we call $f$ and $g$ \emph{strong W-homotopy equivalences}.
\end{defi}

 \begin{exam}
  A weighted polyhedron $(X,\lambda)$ is always strongly W-homotopy equivalent to 
  $(X\times [0,1],\lambda\times \mathbf{1})$. 
 \end{exam}
 
 \begin{exam} \label{Exam:Global-Quot-Homo-Equiv}
    Suppose a compact Lie group $G$ acts smoothly, effectively, and almost freely on manifolds $X$ and $Y$. If $ \varphi : X\rightarrow Y$ is an equivariant
    homotopy equivalence, then $\varphi$ induces a 
    W-homotopy equivalence between the orbit spaces $\overline{\varphi}: X/G \rightarrow Y/G$, where 
   the orbifolds $X\slash G$ and $Y\slash G$ (see Example~\ref{Exam:Eff-Quot}) are considered as descending type weighted polyhedra.
 \end{exam}

  \begin{defi}[AW-homology and DW-homology of Weighted Polyhedron] \label{Defi:Pseudo-Orbi-Homol} \ \n
  
  Let $(X,\lambda)$ be a weighted polyhedron. Choose 
  a divisibly weighted triangulation $(K,\xi)$ of $(X,\lambda)$.
   \begin{itemize}
   \item If $(X,\lambda)$ is of ascending type, we define
    $$H^{AW}_*(X,\lambda)= H_*(K,\partial^{\xi}), \ \  H^{DW}_*(X,\lambda)= H_*(K,\partial^{\widehat{\xi}}).$$
    
    \item If $(X,\lambda)$ is of descending type, we define 
    $$H^{AW}_*(X,\lambda)= H_*(K,\partial^{\widehat{\xi}}), \ \  H^{DW}_*(X,\lambda)= H_*(K,\partial^{\xi}).$$
    \end{itemize}
    
  We call $H^{AW}_*(X,\lambda)$ and $H^{DW}_*(X,\lambda)$ the 
 \emph{AW-homology} and \emph{DW-homology} 
 of $(X,\lambda)$, respectively. Note that the geometrical realization $(|K|,\lambda_{\widehat{\xi}})$ of $(K,\widehat{\xi})$ is not $(X,\lambda)$ anymore.
\end{defi}

 In Section~\ref{Sec:Invariance}, we will prove that
  $H^{AW}_*(X,\lambda)$
 and $H^{DW}_*(X,\lambda)$ are independent on the divisibly weighted triangulation $(K,\xi)$ in their definitions (see Corollary~\ref{Cor:Indepen}).
  Moreover, we will prove in Theorem~\ref{Thm:Homotopy-Invar} that $H^{AW}_*(X,\lambda)$
 and $H^{DW}_*(X,\lambda)$ are invariants under (strong) W-homotopy equivalences of $(X,\lambda)$. Other categorial properties of $H^{AW}_*(X,\lambda)$
 and $H^{DW}_*(X,\lambda)$ follow from the categorial
 properties of weighted simplicial complexes proved
 in~\cite{Daw90}. 
 \n
 
 In addition, for the adjoint $(X,\lambda^*)$ of
 $(X,\lambda)$, it follows from Lemma~\ref{Lem:Adjoint-Weight} and Lemma~\ref{Lem:Inver-Adjoint}
 that the AW-homology and DW-homology of  $(X,\lambda^*)$ are isomorphic to the DW-homology and AW-homology of $(X,\lambda)$, respectively. So when we study AW-homology and DW-homology, it is often enough to assume a weighted polyhedron to be one of the two types when writing the proofs.

   \n
   
  \begin{defi}[W-filtration]  \label{Def:W-filtration}
   \ \n
   For a weighted polyhedron $(X,\lambda)$ where $\lambda: X\rightarrow \Z_+$, let 
     $$O^n(X,\lambda):=\{x\in X\,|\, \lambda(x)=n\} \subseteq X,\ n\in \mathrm{Im}(\lambda).$$
      We call $O^n(X,\lambda)$ a \emph{stratum} of $(X,\lambda)$.
So $X$ is the disjoint union of all its strata:
 $$X = \bigcup_{n\in \mathrm{Im}(\lambda)} O^n(X,\lambda).$$
  \end{defi}
 
  If $(X,\lambda)$ is of ascending (or descending) type, define
  $$ X_n= \ \text{the closure of}\ O^n(X,\lambda) = \bigcup_{n'\leq n} O^{n'}(X,\lambda) \ \ \Big(\text{or}  \ \bigcup_{n'\geq n} O^{n'}(X,\lambda) \Big). $$
  So $(X_n,\lambda):=(X_n,\lambda|_{X_n})$, $n\in\mathrm{Im}(\lambda)$, 
  is a sequence of weighted polyhedra called the \emph{W-filtration} of $(X,\lambda)$.\n

   Similarly, for an ascending (or descending) weighted simplicial complex $(K,\mu)$, define for each $n\in \mathrm{Im}(\mu)$ a subcomplex $K_n$ of $K$ by:
  \begin{equation} \label{Equ:mu-filtration}
     K_n := \bigcup_{\mu(\sigma)\leq n} \sigma \ \ \Big(\text{or} \   \bigcup_{\mu(\sigma)\geq n} \sigma   \Big).
   \end{equation}  
 So $(K_n, \mu )$, $n\in \mathrm{Im}(\mu)$, is a sequence of weighted simplicial complexes
  called the \emph{W-filtration} of $(K,\mu)$.
  Clearly, $(|K_n|,\lambda_{\mu})$, $n\in \mathrm{Im}(\mu)$, is the W-filtration of the weighted polyhedron $(|K|,\lambda_{\mu})$.
  
  \begin{exam}
 In Figure~\ref{p:W-filtration}, we have an ascending
 divisibly weighted simplicial complex $(K,\xi)$
 and its W-filtration where $\mathrm{Im}(\xi)=\{ 2,3,6,12\}$.
    \begin{figure}[h]
         \includegraphics[width=0.94\textwidth]{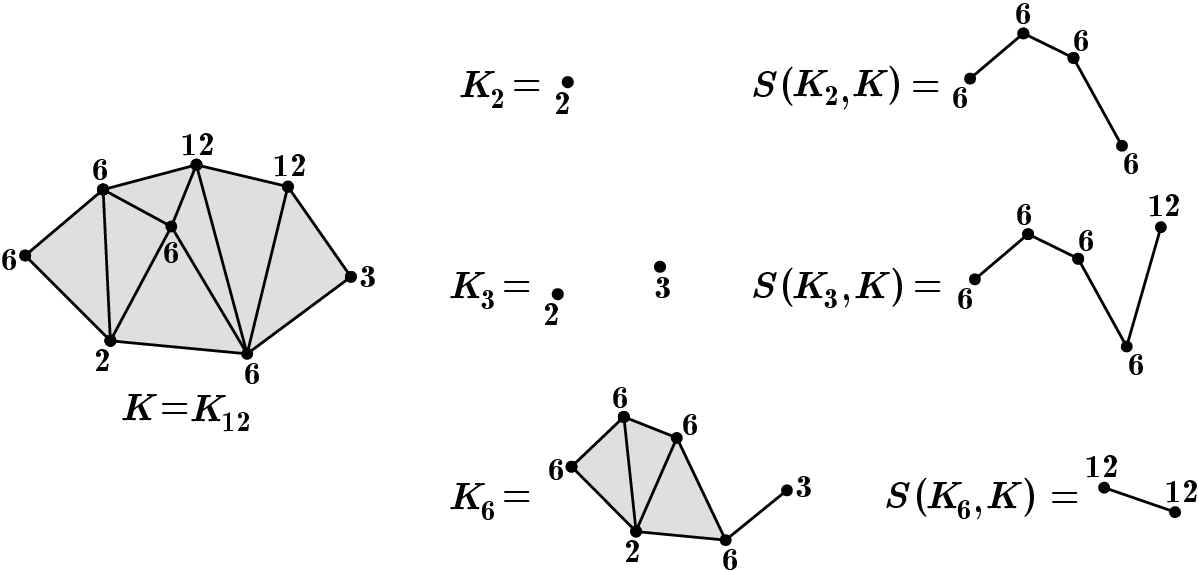}\\
          \caption{The W-filtration of a
          divisibly weighted simplicial complex}\label{p:W-filtration}
      \end{figure}
  \end{exam}
   
   The following lemma will be useful later
   when we prove that a weight-preserving
   continuous map between two weighted polyhedra
   always has a weight-preserving 
   simplicial approximation (see Lemma~\ref{Lem:Weight-Preserv-Simp-Approx}).

 \begin{lem} \label{Lem:Deformation-Retract}
   Suppose $(K,\xi)$ is a divisibly weighted simplicial complex. Then for every $n\in \mathrm{Im}(\xi)$, 
    the piecewise linear deformation retraction defined in Lemma~\ref{Lem:Retraction} from $|N(K_n,K)|  \backslash |K_n|$
     onto $|S(K_n,K)|$ is weight-preserving in $(|K|,\lambda_{\xi})$.
    \end{lem} 
 \begin{proof}
 Let $G^n:
 \big(|N(K_n,K)|  \backslash |K_n| \big)\times [0,1] \rightarrow   |S(K_n,K)|$ be the piecewise linear deformation retraction defined in Lemma~\ref{Lem:Retraction} with $G^n_0=id$ and $G^n_1=r_n$ where
      $$r_n: |N(K_n,K)|  \backslash |K_n| \rightarrow  
       |S(K_n,K)|.$$
       
  To prove that $G^n$ is weight-preserving, it is sufficient to check in the relative interior of each simplex of $N(K_n,K)$.
   Let
   $\sigma=\tau * \tau'$ be the join of a simplex
   $\tau$ of $K_n$ with a simplex
   $\tau'$ of $S(K_n,K)$.
   Assume that $\xi$ is an ascending weight. 
   Let the vertices of $\sigma$ be 
   $\{v_0,\cdots, v_s, v_{s+1},\cdots, v_k\}$
   such that
   $$\tau=\{ v_0,\cdots, v_s\}, \ \ \tau'=\{ v_{s+1},\cdots, v_k\},\ \
   \xi(v_0)\mid\cdots\mid \xi(v_s) \mid \xi(v_{s+1}) \mid\cdots \xi(v_k), $$
   where $\xi(\tau) = \xi(v_s)\leq n$ and $\xi(\sigma)=\xi(\tau')=\xi(v_k)$. By definition,
   $$\lambda_{\xi}(x)= \xi(\sigma), \ \forall
   x\in |\sigma|^{\circ}.$$
   The restriction of $r_n$ to $|\sigma|^{\circ}$
   sends a point $x\in |\sigma|^{\circ}$ to
    $r_n(x)\in |\tau'|^{\circ}$
   whose weight 
   $$\lambda_{\xi}(r_n(x))=\xi(\tau')=\xi(\sigma)=\lambda_{\xi}(x).$$ 
 Therefore, $r_n$ is weight-preserving. It follows that $G^n$ is also weight-preserving.
   The proof for a descending weight is parallel.
 \end{proof}

 To end this section, let us give an example to show that two 
 W-homotopy equivalent weighted polyhedra may not be strongly W-homotopy equivalent. In fact,
 if two weighted polyhedra $(X,\lambda)$ and $(X',\lambda')$ are strongly W-homotopy equivalent, it is necessary that their
  strata $O^n(X,\lambda)$ and $O^n(X',\lambda')$ are homotopy equivalent for every $n$. But a general W-homotopy equivalence may not satisfy this condition.
  
    \begin{figure}[h]
        \begin{equation*}
        \vcenter{
            \hbox{
                  \mbox{$\includegraphics[width=0.46\textwidth]{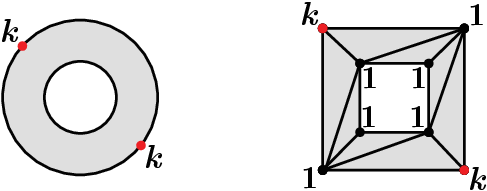}$}
                 }
           }
     \end{equation*}
   \caption{An annulus with two singular points
   on the boundary
       } \label{Fig:Example-AW-DW}
   \end{figure} 
 
\begin{exam} \label{Exam:AW-DW}
  Figure~\ref{Fig:Example-AW-DW} shows a $2$-dimensional pseudo-orbifold based on an annulus with two singular points on the boundary whose weights are $k\geq 2$. Denoted this pseudo-orbifold by $(S^1\times [0,1],\lambda_0)$. The outer circle $S^1\times \{1\}$ with the two singular points is a $1$-dimensional pseudo-orbifold, denoted by $\mathcal{S}^1_{(k,k)}$. The inner circle $S^1\times \{0\}$ consists of regular points. The right picture in Figure~\ref{Fig:Example-AW-DW} gives a divisibly weighted triangulation of $(S^1\times [0,1],\lambda_0)$, denoted by $(K,\xi)$.\n
   
  The deformation retraction from $S^1\times [0,1]$ onto $S^1\times \{1\}$ defines a W-homotopy equivalence from $(S^1\times [0,1],\lambda_0)$ to $\mathcal{S}^1_{(k,k)}$ (as descending type weighted polyhedra).
  But since the regular sets of $(S^1\times [0,1],\lambda_0)$ and $\mathcal{S}^1_{(k,k)}$ are not homotopy equivalent, $(S^1\times [0,1],\lambda_0)$ is not strongly
 W-homotopy equivalent to $\mathcal{S}^1_{(k,k)}$. 
 Another way to see this fact is: the  deformation retraction
   from $S^1\times [0,1]$ onto the inner
    circle $S^1\times \{0\}$ defines a W-homotopy equivalence from $(|K|,\lambda_{\widehat{\xi}})$ 
    to the regular circle $S^1\times \{0\}$ (as ascending type weighted polyhedra).
 This implies
 $$ H^{AW}_*(S^1\times [0,1],\lambda_0)= H_*(K,\partial^{\widehat{\xi}}) \cong 
   \begin{cases}
    \Z ,  &  \text{if $i=0,1$};\\
    0 ,  &  \text{if $i\geq 2$}.
  \end{cases} $$ 
  But by Example~\ref{Exam:1-dim-pseudo-orbifold},
  $H^{AW}_1\big(\mathcal{S}^1_{(k,k)} \big)\cong \Z\oplus \Z_k$. So $(S^1\times [0,1],\lambda_0)$ cannot be strongly
 W-homotopy equivalent to $\mathcal{S}^1_{(k,k)}$.
   \end{exam}
 
 \begin{rem}
  Example~\ref{Exam:AW-DW} shows that a W-homotopy
  equivalence of weighted polyhedra may not preserve
  AW-homology and DW-homology simultaneously.
  An analogous phenomenon occurs in the Chen-Ruan cohomology theory. It was shown in Goldin~\cite{Glodin08} that Chen-Ruan cohomology of global quotients may fail
to be invariant under W-homotopy equivalences. Actually, a strong version of 
homotopy equivalence called ``$T$-representation homotopy''
was introduced in~\cite{Glodin08} to ensure the 
invariance of Chen-Ruan cohomology.
 \end{rem}
  
  \section{Weighted homology of a divisibly weighted simplex}
   \label{Sec:Div-Wei-Simplex}
   
 According to Definition~\ref{Defi:Pseudo-Orbi-Homol},  one should consider divisibly weighted simplices as the building blocks of a weighted polyhedron when we compute the AW-homology and DW-homology groups. So in this section, we first study the properties of divisibly weighted simplices 
 before studying general weighted polyhedra.
 In the following, we consider a simplex $\sigma$ along with all its faces as a simplicial complex.
 
 \begin{lem} \label{Lem:Wt-Div-Simplex}
  Let $\sigma$ be a simplex and $\xi$ be a divisible weight on $\sigma$. Then 
   $$
   H_j(\sigma,\partial^{\xi}) \cong \begin{cases}
   \Z ,  &  \text{if $j=0$}; \\
   0 ,  &  \text{if $j \geq 1$}.
 \end{cases}
 $$
 \end{lem}
 \begin{proof}
 If the weight $\xi$ is ascending, the lemma follows from~\cite[Theorem 3.1]{Daw90}. The argument in~\cite{Daw90} is to show that $(\sigma,\xi)$
 is contractible (see Definition~\ref{Def:Contrac-WSC}).
 Indeed, suppose $\sigma=\{v_0,\cdots,v_n\}$ with $\xi(v_0)\mid \cdots \mid \xi(v_n)$. It is easy to see that
  the identity map $\mathrm{id}_{\sigma}:\sigma\rightarrow \sigma$ is contiguous to the constant map sending the whole simplex $\sigma$ to the vertex $v_0$. If $\xi$ is descending, then the adjoint $\xi^*$ of $\xi$ is an ascending divisible weight on $\sigma$. So the lemma follows from the isomorphism  
  $H_*(\sigma,\partial^{\xi}) \cong H_*(\sigma,\partial^{\xi^*})$ in~\eqref{Equ:Homology-Adjoint-Equal}.
\end{proof}
   
   \begin{rem}
     For a general weight function $\mu$ on a simplex $\sigma$, $H_*(\sigma,\partial^{\mu})$ may have torsion in some dimensions greater than $0$.
   \end{rem}
 
   We can say a bit more about 
  $H_0(\sigma,\partial^{\xi})$ in the following. For any face $\sigma'$ of a simplex $\sigma$, let
       $i_{\sigma\sigma'}: \sigma'\hookrightarrow \sigma$
       be the inclusion. Clearly, the restriction of $\xi$ to 
       $\sigma'$ defines a divisible weight on $\sigma'$. Then $i_{\sigma\sigma'}$
  induces a homomorphism
$$(i_{\sigma\sigma'})_*: H_*(\sigma',\partial^{\xi})\rightarrow H_*(\sigma,\partial^{\xi}).$$ 
  
 \begin{lem}\label{Lem:Weighted-Homl-Simplex}
   Let $\xi$ be an ascending (descending) divisible weight on $\sigma$.
     \begin{itemize}
     \item[(a)] $H_0(\sigma,\partial^{\xi})\cong \Z$ is generated by the
 vertex of $\sigma$ with the minimal (maximal) weight. \n

 \item[(b)] For any face $\sigma'$ of $\sigma$,
 the homomorphism
 $(i_{\sigma\sigma'})_*: H_0(\sigma',\partial^{\xi})\rightarrow H_0(\sigma,\partial^{\xi})$ is injective.
 \end{itemize}   
 \end{lem}
\begin{proof}
 In the following we assume the weight $\xi$ to be ascending. The proof for a descending type weight is completely parallel (or using the isomorphism in Lemma~\ref{Lem:Adjoint-Weight}).\n
 
 (a) Let the vertices of $\sigma$ be $\{v_0,\cdots, v_n\}$ where $ \xi(v_0)\, |\, \cdots\, | \, \xi(v_n) = \xi(\sigma)$. For any $0\leq a<b \leq n$,      
   let $\overline{v_av_b}$ denote the $1$-simplex in $\sigma$ oriented from
   $v_a$ to $v_b$.
     Since $\xi(\overline{v_av_b})=\xi(v_b)$, we have
   $$
     \partial^{\xi}(\overline{v_av_b})=  v_b -
   \frac{\xi(v_b)}{\xi(v_a)} v_a. 
  $$
  
  So for any $0\leq a < b < c\leq n$,  we obtain
  $$ \partial^{\xi}(\overline{v_av_c}) =
  \frac{\xi(v_c)}{\xi(v_b)} 
   \partial^{\xi}(\overline{v_av_b}) +  \partial^{\xi}(\overline{v_bv_c}). $$ 
   
   This implies:
      \begin{align*}
      H_0(\sigma,\partial^{\xi})&=\langle v_0\rangle\oplus \cdots \oplus \langle v_n \rangle \Big\slash \big\langle \partial^{\xi}(\overline{v_av_b}) , 0\leq a<b \leq n \big\rangle \\
       &= \langle v_0\rangle\oplus \cdots \oplus \langle v_n \rangle \Big\slash \big\langle \partial^{\xi}(\overline{v_av_{a+1}}) , 0\leq a \leq n-1 \big\rangle \cong \Z.
    \end{align*}
    The generator of $H_0(\sigma,\partial^{\xi})$ is
    $v_0$ which has the minimal weight in $\sigma$.\n
  
  (b) Suppose the vertex set of
    $\sigma'$ is $\{ v_{i_0},\cdots, v_{i_s}\}$ where
    $0\leq i_0 <\cdots < i_s\leq n$.
    Then by the conclusion in (a), $H_0(\sigma, \partial^{\xi})$ and $H_0(\sigma', \partial^{\xi})$
    are generated by
    $v_{0}$ and $v_{i_0}$, respectively. So
     \begin{align*} 
      \qquad (i_{\sigma\sigma'})_* :  H_0(\sigma',\partial^{\xi})
       & \longrightarrow H_0(\sigma,\partial^{\xi})\\
       [v_{i_0}]\, &\mapsto\, [v_{i_0}]=\Big[\frac{\xi(v_{i_0})}{\xi(v_0)} v_{0}\Big]. \notag
       \end{align*}
      Clearly, this homomorphism is injective.
     \end{proof}
  
 \begin{lem} \label{Lem:reduced-homo-vanish}
   If $\xi$ is a divisible weight on a simplex $\sigma$,
   then the reduced weighted simplicial homology
    $\widetilde{H}_i(\sigma,\partial^{\xi})=0$ for all
    $i \geq 0$.
 \end{lem}
 \begin{proof}
 Let $\partial^{\xi}_i : C_i(\sigma,\partial^{\xi})\rightarrow  C_{i-1}(\sigma,\partial^{\xi})$ be the
 $i$-th weighted boundary map. By Lemma~\ref{Lem:Weighted-Homl-Simplex}, we only need to verify  $\ker(\varepsilon^{\xi})\subset \mathrm{Im}(\partial^{\xi}_1)$ where
 $\varepsilon^{\xi}$ is the augmentation defined by~\eqref{Equ:Aug-1} or~\eqref{Equ:Aug-2}.
 Since $\xi$ is a divisible weight, we can order all the 
  vertices of $\sigma$ to be $\{v_0,\cdots, v_n\}$ such that
  $\xi(v_0) \mid \cdots \mid \xi(v_n)$.
  Let $\alpha=\sum^n_{i=0} m_iv_i\in C_0(\sigma,\partial^{\xi})$ be an element in $\ker(\varepsilon^{\xi})$.\n  
 \begin{itemize}
 \item If $\xi$ is ascending, by the definition of $\varepsilon^{\xi}$ in~\eqref{Equ:Aug-1}, we have
 \[ \varepsilon^{\xi}(\alpha) = \sum^n_{i=0} m_i \xi(v_i)=0 \ \Rightarrow \ m_0 \xi(v_0) = - \sum^n_{i=1} m_i \xi(v_i).\] 
 Since $\xi(\overline{v_0v_i}) = \xi(v_i)$ for any $1\leq i\leq n$, we have
   $$ \partial^{\xi} \Big( \sum^n_{i=1} m_i \overline{v_0v_i}\Big) = \sum^n_{i=1} m_i \Big( v_i - 
    \frac{\xi(v_i)}{\xi(v_0)} v_0\Big)=
    \sum^n_{i=1} m_i v_i -  
    \Big(\sum^n_{i=1} m_i\frac{\xi(v_i)}{\xi(v_0)}\Big) v_0 =\alpha. $$
    
   \item If $\xi$ is descending, by the definition of $\varepsilon^{\xi}$ in~\eqref{Equ:Aug-2}, we have $N=\xi(v_n)$ and
 \[ \varepsilon^{\xi}(\alpha) = \sum^n_{i=0} m_i\frac{N}{ \xi(v_i)}=0 \ \Rightarrow \  \frac{m_n}{\xi(v_n)} = - \sum^{n-1}_{i=0} \frac{m_i}{\xi(v_i)}.\]    
  In this case, $\xi(\overline{v_nv_i}) = \xi(v_i)$ for any $0\leq i \leq n-1$. So we have
   $$ \partial^{\xi} \Big( \sum^{n-1}_{i=0} m_i \overline{v_nv_i}\Big) = \sum^{n-1}_{i=0} m_i \Big( v_i - 
    \frac{\xi(v_n)}{\xi(v_i)} v_n\Big)=
    \sum^{n-1}_{i=0} m_i v_i -  
    \Big(\sum^{n-1}_{i=0} m_i\frac{\xi(v_n)}{\xi(v_i)}\Big) v_n =\alpha. $$
       \end{itemize}
       
    So in both cases, $\alpha$ belongs to $\mathrm{Im}(\partial^{\xi}_1)$. Hence $\ker(\varepsilon^{\xi})\subset \mathrm{Im}(\partial^{\xi}_1)$.
 \end{proof}
 
 \begin{rem} \label{Rem:Reduced-1}
   In Lemma~\ref{Lem:reduced-homo-vanish},
   $\widetilde{H}_{-1}(\sigma,\partial^{\xi})$ may not be trivial. For example, if $\xi$ is an ascending divisible
   weight, it follows from the definition of augmentation $\varepsilon^{\xi}$ in~\eqref{Equ:Aug-1} that $\widetilde{H}_{-1}(\sigma,\partial^{\xi})\cong \Z\slash k_0\Z$ where 
   $k_0 = \mathrm{min}\{ \xi(v)\,|\, v\
   \text{is a vertex of}\ \sigma\}$.
 \end{rem}

  Let $\xi$ be a divisible weight on a simplex $\sigma$. If we do barycentric subdivision to 
  $\sigma$, we naturally obtain two weighted simplicial complexes $(Sd(\sigma),Sd(\xi))$ and $(Sd(\sigma),Sd(\widehat{\xi}))$. Besides, we can take the inversion of 
  $Sd(\xi)$ to obtain yet another
  weight $\widehat{Sd(\xi)}$ on $Sd(\sigma)$. 
  But generally speaking,
   $\widehat{Sd(\xi)}$ does not equal $Sd(\widehat{\xi})$ on $Sd(\sigma)$; see Figure~\ref{p:Subdivision-Simplex} for example.    
    
     \begin{figure}[h]
         \includegraphics[width=0.68\textwidth]{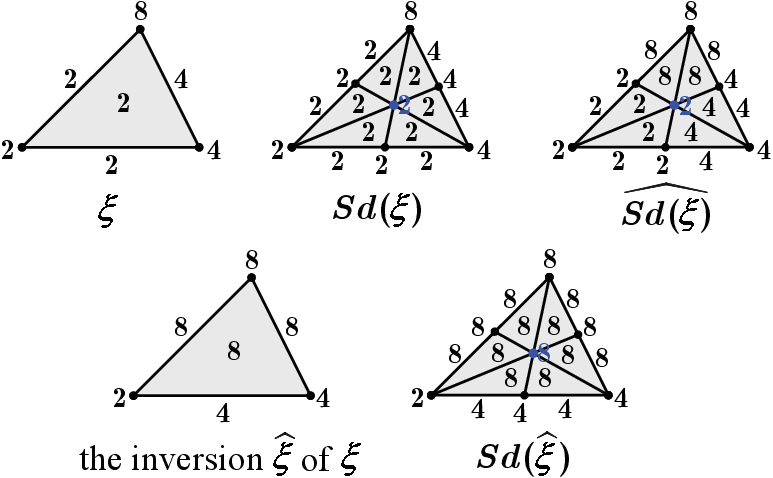}\\
          \caption{Barycentric subdivision of a divisibly weighted simplex}\label{p:Subdivision-Simplex}
      \end{figure}

  \begin{lem} \label{Lem:Wt-Div-Simplex-SD}
  Let $\xi$ be a divisible weight on a simplex $\sigma$. Then 
    $$H_j(Sd(\sigma),\partial^{Sd(\xi)}) \cong
   H_j(Sd(\sigma),\partial^{\widehat{Sd(\xi)}})
   \cong \begin{cases}
   \Z ,  &  \text{if $j=0$}; \\
   0 ,  &  \text{if $j \geq 1$}.
 \end{cases}
 $$ 
 \end{lem}  
 \begin{proof}
   We assume the weight $\xi$ to be descending in the following proof. The argument for an ascending type weight is parallel.\n
   Let us do induction on the dimension of $\sigma$. When $\dim(\sigma)=0$,
   the statement is obviously true.    
   Let $\{v_0,\cdots, v_n\}$ be the vertex set of $\sigma$ with 
    \begin{equation} \label{Equ:vertex-order-weights}
      \xi(\sigma)=\xi(v_0) \mid \cdots \mid \xi(v_n).
    \end{equation}     
     The vertex set of $Sd(\sigma)$ is  
     $\{ b_{\tau}\mid \tau \ \text{is a face of}\ \sigma\}$.  
      Define (see Figure~\ref{p:Simpl-Retract})
     $$ J_0 := \{ b_{\tau}\,|\,  \tau \ \text{is any face of}\ \sigma \ \text{that contains}\ v_0 \},$$
     $$ L_0 := b_{\sigma}\cdot Sd(\sigma\backslash v_0)\ 
     (\text{the cone over}\ Sd(\sigma\backslash v_0)).$$
  In addition, let 
    $i_0: L_0 \hookrightarrow Sd(\sigma)$
    be the inclusion map.
  \begin{figure}[h]
         \includegraphics[width=0.51\textwidth]{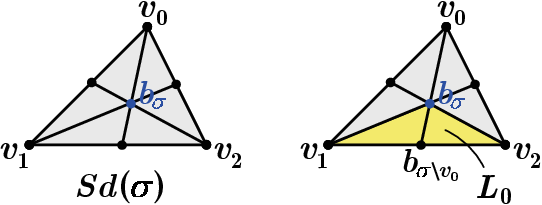}\\
          \caption{A simplicial retraction of $Sd(\sigma)$ where $\dim(\sigma)=2$}\label{p:Simpl-Retract}
      \end{figure}
   
   \n
    
    \textbf{Claim:} There is an isomorphism $H_*(Sd(\sigma),\partial^{Sd(\xi)}) \cong H_*(L_0,\partial^{Sd(\xi)}) $.\n
    
      First of all, we define a simplicial retraction $r_0: Sd(\sigma)\rightarrow  L_0$ by mapping all the vertices in $J_0$ to $b_{\sigma}$. Note that if $\tau$ contains $v_0$,
    then $\xi(\tau)=\xi(v_0)$ since $\xi$ is descending.
   So for every vertex $b_{\tau}$ in $J_0$, we have
      $$Sd(\xi)(b_{\tau})=\xi(\tau)=\xi(v_0)=\xi(\sigma) = Sd(\xi)(b_{\sigma}) . $$  
       So by Lemma~\ref{Lem:Morph-Vertex}, 
       $r_0: (Sd(\sigma),Sd(\xi))\rightarrow (L_0,Sd(\xi))$ is a morphism
       of weighted simplicial complexes (in fact $r_0$ is weight-preserving). Moreover, we can prove\n
       
   \begin{itemize}
   \item $i_0\circ r_0: (Sd(\sigma),Sd(\xi))\rightarrow (Sd(\sigma),Sd(\xi))$ is contiguous to $\mathrm{id}_{Sd(\sigma)}$. 
   \end{itemize}\n
   
   Indeed, there is a natural simplicial map
  $F: Sd(\sigma)\times [0,1]\rightarrow Sd(\sigma)$  
   which extends $\mathrm{id}_{Sd(\sigma)}$ and $i_0\circ r_0$  on the top and the bottom of $Sd(\sigma)\times [0,1]$. Since both $\mathrm{id}_{Sd(\sigma)}$ and $i_0\circ r_0$ preserve the weight
   of each vertex of $Sd(\sigma)$, we can assert that
   $F$ defines a weight-preserving morphism from
   $\big(Sd(\sigma)\times [0,1],\partial^{Sd(\xi)\times \mathbf{1}} \big)$ to $\big( Sd(\sigma),\partial^{Sd(\xi)} \big)$. So by Definition~\ref{Def:Contigu},
   $i_0\circ r_0$ is contiguous to $\mathrm{id}_{Sd(\sigma)}$. Then
   $$(r_0)_*: H_*(Sd(\sigma),\partial^{Sd(\xi)}) \rightarrow H_*(L_0,\partial^{Sd(\xi)}) $$
    is an isomorphism  by Corollary~\ref{Cor:Retract-Weight-Complex}. The claim is proved.\n
       
       Next, we define a simplicial retraction $r'_0: L_0\rightarrow Sd(\sigma\backslash v_0)$ by mapping
      the vertex $b_{\sigma}$ to $b_{\sigma\backslash v_0}$. Let $i'_0: Sd(\sigma\backslash v_0)\hookrightarrow L_0$ be the inclusion map.
     By our assumption~\eqref{Equ:vertex-order-weights}, 
     $Sd(\xi)(b_{\sigma})=\xi(\sigma)=\xi(v_0)$ divides $\xi(\sigma\backslash v_0)=Sd(\xi)(b_{\sigma\backslash v_0})$. Therefore, $r'_0 : (L_0,Sd(\xi))\rightarrow (Sd(\sigma\backslash v_0),Sd(\xi)) $ is a morphism of descending type weighted simplicial complexes. Besides, it is easy to check that\n 
     \begin{itemize}
     \item
     $i'_0\circ r'_0: (L_0,Sd(\xi))\rightarrow  (L_0,Sd(\xi))$ is contiguous to $\mathrm{id}_{L_0}$. 
     \end{itemize} \n
     
     So by Corollary~\ref{Cor:Retract-Weight-Complex} again, 
     the map $r'_0$ induces an isomorphism
     $$ (r'_0)_*: H_*(L_0,\partial^{Sd(\xi)}) \overset{\cong}{\longrightarrow} H_*(Sd(\sigma\backslash v_0),\partial^{Sd(\xi)}).$$
    Then the composition of $(r_0)_*$ and $(r'_0)_*$ gives an isomorphism 
    $$(r'_0)_*\circ (r_0)_*: H_*(Sd(\sigma),\partial^{Sd(\xi)}) \overset{\cong}{\longrightarrow} 
     H_*(Sd(\sigma\backslash v_0),\partial^{Sd(\xi)}).$$
   Finally, by our induction hypothesis on the $(n-1)$-simplex $\sigma\backslash v_0$ with the divisible weight $\xi$, we obtain the desired result
   of $H_*(Sd(\sigma),\partial^{Sd(\xi)})$.\n
   
 In addition, if let $i''_0: Sd(\sigma\backslash v_0)\rightarrow Sd(\sigma)$ be the inclusion map, then we have
  \[ \mathrm{id}_{Sd(\sigma)} \underset{c}{\simeq}
  i_0\circ r_0 = i_0 \circ \mathrm{id}_{L_0}\circ r_0 
  \underset{c}{\simeq}  i_0 \circ i'_0\circ r'_0 \circ r_0  
  =  i''_0\circ r'_0 \circ r_0. \]
  By composing these contiguous relations
  in all the induction steps, we can deduce that
   $(Sd(\sigma),Sd(\xi))$ is contractible
  in the sense of Definition~\ref{Def:Contrac-WSC}.\n 
   
   We can use a similar argument to prove the result for $H_*(Sd(\sigma),\partial^{\widehat{Sd(\xi)}})$. But there is actually a shortcut in this case.    
   Let $\varrho_{0}: Sd(\sigma)\rightarrow Sd(\sigma)$ denote the constant map sending the whole $Sd(\sigma)$ to $b_{\sigma}$. Note that for any face $\tau$ of $\sigma$,
   $$ \widehat{Sd(\xi)}(b_{\sigma}) = Sd(\xi)(b_{\sigma}) = \xi(\sigma) = \xi(v_0) \mid \xi(\tau)=
   Sd(\xi)(b_{\tau}) = \widehat{Sd(\xi)}(b_{\tau}). $$ 
   Since $\xi$ is assumed to be a descending weight on $\sigma$, $\widehat{Sd(\xi)}$ is an ascending weight on $Sd(\sigma)$.
So
 $\varrho_0: (Sd(\sigma),\widehat{Sd(\xi)})\rightarrow (Sd(\sigma),\widehat{Sd(\xi)})$ is a morphism of ascending type weighted simplicial complexes. Moreover, it is easy to check that $\varrho_0$ is contiguous to 
  the identity 
    $\mathrm{id}_{Sd(\sigma)}: (Sd(\sigma),\widehat{Sd(\xi)})\rightarrow (Sd(\sigma),\widehat{Sd(\xi)})$. So
  $(Sd(\sigma),\partial^{\widehat{Sd(\xi)}})$ is contractible in the sense of Definition~\ref{Def:Contrac-WSC}. Therefore, our result follows from Proposition~\ref{Prop:Contrac-WSC}. \n
  We warn that $\varrho_0$ is not a morphism from $(Sd(\sigma),Sd(\xi))$ to itself since $Sd(\xi)$ is a descending weight. So we cannot directly prove that $(Sd(\sigma),Sd(\xi))$ is
 contractible as we do for $(Sd(\sigma),\widehat{Sd(\xi)})$ here.
  \end{proof}
  
 In the proof of Lemma~\ref{Lem:Wt-Div-Simplex-SD},
   the weight function $\xi$ being divisible is essential for the argument. Indeed, if a weight $\mu$ on $\sigma$ is not divisible, $H_j(\sigma,\partial^{\mu})$ could be non-trivial when $j\geq 1$.  
 Later,  we will use Lemma~\ref{Lem:Wt-Div-Simplex-SD} to prove that the
 weighted simplicial homology groups of any divisibly weighted simplicial complex are preserved under barycentric subdivisions.

  \section{Invariance of AW-homology and DW-homology}
    \label{Sec:Invariance}
    
   In this section, we will prove some foundational facts that the AW-homology and DW-homology of a weighted polyhedron are invariant under isomorphisms and more generally under (strong) W-homotopy equivalences. 
   Our argument proceeds along the same line as
   the proof of the topological invariance of
   the ordinary simplicial homology of simplicial complexes in~\cite{Munk84}. \n 
   
  We will first review some basic notions and constructions in the theory of
   simplicial homology. The standard reference of these contents 
   is~\cite[Chapter\,1-2]{Munk84} which is highly recommended to the reader. Then we will show how to modify these constructions to adapt to our study of
   weighted simplicial complexes and weighted polyhedra.

  \subsection{Simplicial approximations of continuous maps}
  \label{Subsec:Simpl-Approx}
  \ \n
    
   \begin{defi}\label{Defi:Simpl-Approx}
   Let $K$ and $L$ be two simplicial complexes
   and $f: |K|\rightarrow |L|$ be a continuous map. 
   If $\varrho: K\rightarrow L$ is a simplicial map such that:
    \[ f(\mathrm{St}(v,K)) \subseteq \mathrm{St}(\varrho(v),L) \] 
    for every vertex $v$ of $K$, then $\varrho$ is called a  \emph{simplicial approximation} of $f$. 
   \end{defi} 
    
    The following are some standard facts on
    simplicial approximation:
    
    \begin{itemize}    
     \item A simplicial map $\varrho : K\rightarrow L$ is a simplicial approximation of $f$ if and only if
     $\overline{\varrho}(x)\in |\mathrm{Car}_L(f(x))|$ for all $x\in |K|$.\n
     
    \item A continuous map $f: |K|\rightarrow |L|$ has a 
     simplicial approximation if and only if 
     $f$ satisfies the \emph{star condition} relative to $K$ and $L$: for every vertex $v$ of $K$, there exists a vertex $u$ of $L$ such that $f(\mathrm{St}(v,K)) \subseteq \mathrm{St}(u,L)$. \n
     
     \item If $\varrho, \varrho' :K\rightarrow L$ are two 
   simplicial approximations of $f$, then the chain maps
   $\varrho_{\#}$ and $\varrho'_{\#}$ are chain homotopic and so $\varrho_*=\varrho'_* : H_*(K)\rightarrow H_*(L)$. 
      \end{itemize}
  
    \begin{thm}[\text{Existence of Simplicial Approximation~\cite[Theorem 16.1, 16.5]{Munk84}}] \label{Thm:Simp-Approx}
   For two simplicial complexes $K$ and $L$ and a continuous map $f: |K|\rightarrow |L|$,
       there always exists a subdivision $K'$ of $K$ such that $f$ has a simplicial approximation $\varrho: K' \rightarrow L$.
    \end{thm}
    
    In Theorem~\ref{Thm:Simp-Approx}, if the simplicial complex $K$ is compact, the subdivision $K'$ can be taken to be $Sd^m(K)$ for some
    large enough integer $m$. But if $K$ is not compact, some generalized barycentric subdivisions (see~\cite[p.\,90]{Munk84}) are needed to construct the complex $K'$ .\n

 \subsection{Acyclic carrier}\ \n
  A chain complex $\mathcal{C}_*=(C_p,\partial_p)$ is called \emph{free} if each $C_p$ is a free abelian group.
 Besides, $\mathcal{C}_*$ is called \emph{acyclic}  if its homology group $H_*(\mathcal{C}_*)$ satisfies
    $$ H_i(\mathcal{C}_*)= \begin{cases}
    0 ,  &  \text{if $i\geq 1$}; \\
    \Z ,  &  \text{if $i=0$}.
 \end{cases} $$
 
  \begin{defi}[Acyclic Carrier]
  Let $(\mathcal{C}_*,\varepsilon)=(C_p,\partial_p,\varepsilon)$ be an augmented chain complex. Suppose $\mathcal{C}_*$ is free; let $\{\sigma_p^\alpha\}$ be a basis for $C_p$, as $\alpha$ ranges over some index set $J_p$. Let $(\mathcal{C}'_*,\varepsilon')=(C'_p,\partial'_p,\varepsilon')$ be an arbitrary augmented chain complex. An \emph{acyclic carrier} from $\mathcal{C}_*$ to $\mathcal{C}'_*$, relative to the given bases, is a function $\Phi$ that assigns to each basis element $\sigma_p^\alpha$, a subchain complex $\Phi (\sigma_p^\alpha)$ of $\mathcal{C}^{\prime}_*$, satisfying the following conditions:
  \begin{itemize}
  \item The chain complex $\Phi(\sigma_p^\alpha)$ is augmented by $\varepsilon'$ and is acyclic.\n
 \item If $\sigma_{p-1}^\beta$ appears in the expression for $\partial_p \sigma_p^\alpha$ in terms of the preferred basis for $C_{p-1}$, then $\Phi(\sigma_{p-1}^\beta)$ is a subchain complex of $\Phi(\sigma_p^\alpha)$.\n
 \end{itemize}
  
  A homomorphism $f: C_p \rightarrow C_q^{\prime}$ is said to be \emph{carried by $\Phi$} if $f(\sigma_p^\alpha)$ belongs to the $q$-dimensional group of the subchain complex $\Phi(\sigma_p^\alpha)$ of $\mathcal{C}^{\prime}$, for each $\alpha$. Moreover, a chain map 
  $\phi=\{\phi_p\}: \mathcal{C}_*\rightarrow \mathcal{C}'_*$ is said to be \emph{carried by $\Phi$} if
   each $\phi_p: C_p \rightarrow C_p^{\prime}$ is carried by $\Phi$.
\end{defi}

  \begin{thm}[\text{Algebraic Acyclic Carrier Theorem~\cite[Theorem 13.4]{Munk84}}] \label{Thm:Acyclic-Carrier} 
  Suppose $(\mathcal{C}_*,\varepsilon)$ and $(\mathcal{C}'_*,\varepsilon')$ are augmented chain complexes where $\mathcal{C}_*$ is free.  Let $\Phi$ be an acyclic carrier
   from $\mathcal{C}_*$ to $\mathcal{C}'_*$ relative to a set of preferred bases for $\mathcal{C}_*$. Then
   there is an augmentation-preserving chain map $\phi: 
   \mathcal{C}_* \rightarrow \mathcal{C}'_*$ carried by $\Phi$. Moreover, any two such chain maps are chain homotopic and the chain homotopy is also carried by $\Phi$.    
  \end{thm} 
  
  Theorem~\ref{Thm:Acyclic-Carrier} is particularly useful in the situation when one wants to prove that two chain maps are chain homotopic but do not want to write down an explicit formula for the chain homotopy.  In addition,
 if we let the chain complexes $\mathcal{C}_*$ and $\mathcal{C}'_*$ in the above theorem be the
  simplicial chain complexes of two simplicial complexes, then we obtain the geometric version of the acyclic carrier theorem (see~\cite[Theorem 13.3]{Munk84}).
 
  \n

 \subsection{Barycentric subdivision}
    \ \n
  For a simplicial complex $K$, let
     $Sd_{\#} : C_*(K)\rightarrow C_*(Sd(K))$ be the ordinary chain map induced by the barycentric subdivision $Sd$ of $K$. We warn that $Sd$ is not a simplicial map.   
   By definition, $Sd_{\#}$
  sends any oriented $n$-simplex $\sigma$ to the sum of all the $n$-simplices in $Sd(\sigma)$ with the same orientation. 
  More precisely, any $n$-simplex in $Sd(\sigma)$ can be written as $\{b_{\sigma_0}\cdots b_{\sigma_n}\}$ where $\sigma_0\subsetneq \cdots \subsetneq \sigma_n = \sigma$ is an ascending sequence of faces of $\sigma$. So
  \begin{equation} \label{Equ:Bary-Subdiv-Form}
   Sd_{\#}(\sigma)= \sum_{\sigma_0\subsetneq \cdots\subsetneq \sigma_n = \sigma} 
 \varepsilon(\sigma_0,\cdots,\sigma_n)\cdot [b_{\sigma_0}\cdots b_{\sigma_n}]. 
 \end{equation}
 where $\varepsilon(\sigma_0,\cdots,\sigma_n)\in \{-1,1\}$
 is properly chosen to make $Sd_{\#}\circ\partial = 
  \partial\circ Sd_{\#}$.\n
  
  For brevity, we omit the sign $\varepsilon(\sigma_0,\cdots,\sigma_n)$ in~\eqref{Equ:Bary-Subdiv-Form}  and write the definition of $Sd_{\#}$ inductively as
   \begin{equation} \label{Equ:Sd-simple}
    Sd_{\#}(\sigma)=b_{\sigma}\cdot Sd_{\#}(\partial \sigma), \ \text{where}\ Sd_{\#}(v)=v, \, \forall v\in K^{(0)},
    \end{equation}    
  where for any $(n-1)$-simplex $\tau$ in $Sd_{\#}(\partial \sigma)$, $b_{\sigma}\cdot \tau$ is the cone of $\tau$ with $b_{\sigma}$.    
 \n
 
  Let $Sd_* : H_*(K)\rightarrow H_*(Sd(K))$ denote the homomorphism induced by $Sd_{\#}$.
    
   \begin{thm}[\text{see~\cite[\S 17]{Munk84}}] \label{Thm:Sd-Homology}
     $Sd_* : H_*(K)\rightarrow H_*(Sd(K))$ is an isomorphism.
   \end{thm}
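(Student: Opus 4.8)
The plan is to exhibit an explicit chain-homotopy inverse to $\mathrm{Sd}_{\#}$ and then pass to homology, following the classical strategy behind Theorem~\ref{Thm:Acyclic-Carrier}. The natural candidate is the chain map induced by a \emph{simplicial approximation of the identity}. First I would construct a simplicial map $\lambda: \mathrm{Sd}(K)\rightarrow K$ by choosing, for each vertex $b_{\sigma}$ of $\mathrm{Sd}(K)$, some vertex $\lambda(b_{\sigma})$ of the simplex $\sigma$. One checks this assignment extends to a simplicial map (the barycenters spanning a simplex of $\mathrm{Sd}(K)$ come from a nested flag $\sigma_0\subsetneq\cdots\subsetneq\sigma_n$, so their $\lambda$-images all lie in $\sigma_n$) and that $\lambda$ is a simplicial approximation of $\mathrm{id}:|\mathrm{Sd}(K)|=|K|\rightarrow|K|$, since $\mathrm{St}(b_{\sigma},\mathrm{Sd}(K))\subseteq \mathrm{St}(v,K)$ for every vertex $v$ of $\sigma$. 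Both $\mathrm{Sd}_{\#}$ (which fixes vertices) and $\lambda_{\#}$ (which sends a barycenter to a vertex) are augmentation-preserving, so Theorem~\ref{Thm:Acyclic-Carrier} is available to compare them with identities.

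Next I would show the two composites are chain homotopic to the respective identity maps. For $\lambda_{\#}\circ\mathrm{Sd}_{\#}: C_*(K)\rightarrow C_*(K)$, I would use the carrier $\Phi(\sigma)=\sigma$ (the closed simplex, viewed as the subcomplex of its faces), which is acyclic and respects faces. The identity is trivially carried by $\Phi$, and since $\mathrm{Sd}_{\#}(\sigma)$ is a chain in $\mathrm{Sd}(\sigma)$ while $\lambda$ maps every barycenter $b_{\tau}$ with $\tau\subseteq\sigma$ to a vertex of $\sigma$, the chain $\lambda_{\#}\mathrm{Sd}_{\#}(\sigma)$ also lies in $\sigma=\Phi(\sigma)$; hence both maps are carried by $\Phi$ and Theorem~\ref{Thm:Acyclic-Carrier} gives $\lambda_{\#}\circ\mathrm{Sd}_{\#}\simeq\mathrm{id}$. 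For the reverse composite $\mathrm{Sd}_{\#}\circ\lambda_{\#}: C_*(\mathrm{Sd}(K))\rightarrow C_*(\mathrm{Sd}(K))$, I would use the carrier $\Psi(\tau)=\mathrm{Sd}(\sigma_{\tau})$, where $\sigma_{\tau}:=\mathrm{Car}_K(x)$ for an interior point $x$ of $\tau$ (equivalently the top simplex $\sigma_n$ in the flag defining $\tau$). Each $\Psi(\tau)$ is the subdivision of a single simplex, and $\Psi$ respects faces because the $K$-carrier is monotone under passing to faces of $\tau$. Both the identity (as $\tau\in\mathrm{Sd}(\sigma_{\tau})$) and $\mathrm{Sd}_{\#}\lambda_{\#}$ (as $\lambda(\tau)\subseteq\sigma_{\tau}$, so $\mathrm{Sd}_{\#}\lambda_{\#}(\tau)$ lies in $\mathrm{Sd}(\sigma_{\tau})$) are carried by $\Psi$, so Theorem~\ref{Thm:Acyclic-Carrier} again yields $\mathrm{Sd}_{\#}\circ\lambda_{\#}\simeq\mathrm{id}$.

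Passing to homology, these two chain homotopies give $\lambda_*\circ\mathrm{Sd}_*=\mathrm{id}$ on $H_*(K)$ and $\mathrm{Sd}_*\circ\lambda_*=\mathrm{id}$ on $H_*(\mathrm{Sd}(K))$, whence $\mathrm{Sd}_*$ is an isomorphism with inverse $\lambda_*$. The main obstacle, and the only place where real input beyond bookkeeping is needed, is verifying the acyclicity hypothesis in the two carriers, i.e. that the barycentric subdivision $\mathrm{Sd}(\sigma)$ of a single simplex is acyclic. I expect to deduce this from the inductive cone description $\mathrm{Sd}_{\#}(\sigma)=b_{\sigma}\cdot\mathrm{Sd}_{\#}(\partial\sigma)$ in~\eqref{Equ:Sd-simple}: $\mathrm{Sd}(\sigma)$ is a cone with apex $b_{\sigma}$, and cones are acyclic. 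A secondary routine check is that $\lambda$ is genuinely simplicial, which is handled by the nested-flag observation above; everything else reduces to applying Theorem~\ref{Thm:Acyclic-Carrier}.
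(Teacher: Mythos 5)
Your proof is correct and follows essentially the same route as the paper: a simplicial approximation of the identity $\mathrm{Sd}(K)\rightarrow K$ combined with the acyclic carrier theorem, with the same carrier $\mathrm{Sd}(\sigma_l)$ in the direction on $C_*(\mathrm{Sd}(K))$. The only difference is that the paper fixes the approximation canonically (sending $b_{\sigma}$ to the first vertex of $\sigma$ in a chosen total ordering), which makes $\pi_{\#}\circ\mathrm{Sd}_{\#}=id_{C_*(K)}$ hold exactly by induction so the carrier theorem is needed only once, whereas your arbitrary choice of $\lambda$ requires invoking the carrier theorem in both directions (with carriers $\Phi(\sigma)=\sigma$ and $\Psi(\tau)=\mathrm{Sd}(\sigma_{\tau})$) --- equally valid here, though the paper's canonical choice is the one that later generalizes to the weight-preserving approximation $\pi^{w}$ needed in the weighted and stratified settings.
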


  \subsection{Barycentric subdivision of a weighted simplicial complex} \ \n
  
  Suppose $(K,\mu)$ is a weighted simplicial complex.
  By Definition~\ref{Def:Bary-Subdiv-Weighted},  $\mu$ induces a divisible weight $Sd(\mu)$
    on $Sd(K)$. In particular, for a simplex $\{b_{\sigma_0},\cdots, b_{\sigma_l}\}$ in $Sd(K)$ where $\sigma_0\subsetneq \cdots\subsetneq \sigma_l \in K$, we have
   \begin{equation} \label{Equ:weight-Sd}
     Sd(\mu)\big(\{b_{\sigma_0},\cdots, b_{\sigma_l}\}\big) = \mu(\sigma_l). 
   \end{equation} 
        
        Note that for an $n$-simplex $\sigma$ of $K$, 
      $Sd_{\#}(\sigma)$ in~\eqref{Equ:Bary-Subdiv-Form} consists of a collection of $n$-simplices having the same weight as $\sigma$. 
 
   \begin{lem} \label{Lem:Sd-Weight-Chain}
   $Sd_{\#}: \big(C_*(K),\partial^{\mu} \big) \rightarrow 
   \big(C_*(Sd(K)),\partial^{Sd(\mu)} \big)$ is a chain map.
  \end{lem}
  \begin{proof}
 We assume the weight function $\mu$ to be descending in the argument below. The proof for an ascending type weight is completely parallel.\n
 
 For an oriented $n$-simplex $\sigma$ of $K$, by definition
 \[ \partial \sigma = \sum^n_{j=0} (-1)^j  \partial_j\sigma, \ \ \partial^{\mu} \sigma \overset{\eqref{Equ:weighted-Boun-DW}}{=} \sum^n_{j=0} (-1)^j \frac{\mu(\partial_j\sigma)}{\mu(\sigma)} \partial_j\sigma. \]
 So we have
    \begin{align*}
    \partial^{Sd(\mu)}\circ Sd_{\#}(\sigma) 
   & \overset{\eqref{Equ:Sd-simple}}{=} \partial^{Sd(\mu)}\big( b_{\sigma}\cdot 
    Sd_{\#}(\partial \sigma ) \big)
    = \partial^{Sd(\mu)}\Big( b_{\sigma}\cdot 
    \sum^n_{j=0} (-1)^j   Sd_{\#}(\partial_j\sigma)  \Big) \\
    &=   \sum^n_{j=0} (-1)^j  \partial^{Sd(\mu)} \big( b_{\sigma}\cdot  Sd_{\#}(\partial_j\sigma)  \big) \\
    &\overset{\divideontimes}{=} 
    \sum^n_{j=0} (-1)^j \Big( \frac{\mu(\partial_j\sigma)}{\mu(\sigma)} Sd_{\#}(\partial_j\sigma) - b_{\sigma}\cdot \partial \,
    Sd_{\#}(\partial_j \sigma )  \Big) \\ 
     (\text{by induction})  &= Sd_{\#} \Big( \sum^n_{j=0} (-1)^j  \frac{\mu(\partial_j\sigma)}{\mu(\sigma)} \partial_j\sigma \Big) - b_{\sigma}\cdot  Sd_{\#}(\partial\partial\sigma ) \\
    & =
    Sd_{\#}\circ \partial^{\mu} \sigma.
   \end{align*}
  The equality $\overset{\divideontimes}{=}$ follows from the facts that for any $(n-1)$-simplex $\tau$ in $Sd(\partial_j \sigma)$:\n
  
  $\bullet$ $Sd(\mu)(\tau)= \mu(\partial_j\sigma)$ by~\eqref{Equ-Sd-n-simplex}, and $Sd(\mu)(b_{\sigma}\cdot \tau)=\mu(\sigma)$ by~\eqref{Equ:weight-Sd};\n
  
 $\bullet$ for each $(n-2)$-face $\theta$ of $\tau$, $Sd(\mu)(b_{\sigma}\cdot \theta) =\mu(\sigma)$ by~\eqref{Equ:weight-Sd}.
 \end{proof}  
    
  For a divisibly weighted simplicial complex
    $(K,\xi)$, the following theorem (which generalizes Theorem~\ref{Thm:Sd-Homology}) tells us that
    weighted simplicial homology of $(K,\xi)$ is preserved under barycentric subdivisions. This result is crucial for us to prove the invariance of
   AW-homology and DW-homology under isomorphisms of weighted polyhedra.

    \begin{thm} \label{Thm:Sd-Homology-Isom-1}
 Let $(K,\xi)$ be a divisibly weighted simplicial complex.\n
 \begin{itemize}
 \item[(i)] $Sd_{\#}: \big(C_*(K),\partial^{\xi}\big) \rightarrow 
   \big(C_*(Sd(K)),\partial^{Sd(\xi)} \big)$ is a chain homotopy equivalence which induces an isomorphism
   $Sd_*: 
   H_*(K,\partial^{\xi})\rightarrow H_* \big(Sd(K),\partial^{Sd(\xi)} \big)$. 
  \item[(ii)] For the inversion weight $\widehat{\xi}$ of $\xi$, there exists a chain map
  $$\widehat{Sd}_{\#}:  C_*(K,\partial^{\widehat{\xi}})\rightarrow C_* \big(Sd(K),\partial^{\widehat{Sd(\xi)}} \big)$$
   which induces an isomorphism 
   $\widehat{Sd}_*: 
   H_*(K,\partial^{\widehat{\xi}})\rightarrow H_* \big(Sd(K),\partial^{\widehat{Sd(\xi)}} \big)$.
   \end{itemize}
 \end{thm}
 \begin{proof} 
   We assume the weight $\xi$ to be of descending type in the following proof. The argument for an ascending type weight is parallel.\n 
 (i) We first construct 
   a special simplicial approximation of $\mathrm{id}_K$.
   Choose a total ordering $\prec$ of all the vertices of $K$ such that $\xi(v)\leq \xi(v')$ for any $v\prec v'$. 
 Let $\sigma=\{v_0,\cdots, v_n\}$ be a simplex in $K$ where $v_0\prec\cdots\prec v_n$.
 Since $(K,\xi)$ is divisibly weighted and $\xi$ is descending, we have
  \begin{equation} \label{Equ:sigma-des-weight}
    \xi(\sigma)=\xi(v_0) \mid \cdots \mid \xi(v_n).
    \end{equation}
   
    Then we define a simplicial map $\pi^{\xi} : Sd(K)\rightarrow K$ by 
    \begin{align} \label{Equ:sigma-cond}
     \pi^{\xi}: Sd(K)^{(0)} &\longrightarrow K^{(0)} \\
             b_{\sigma}\, &\longmapsto \, v_0 \in \sigma.\notag
     \end{align}
   
    Note that $\pi^{\xi} : Sd(K)\rightarrow K$ is a simplicial approximation of the identity map $\mathrm{id}_{|K|}:
    |K|=|Sd(K)| \rightarrow |K|$. Moreover, for each $\sigma$
    of $K$,  
    $$Sd(\xi)(b_{\sigma})=\xi(\sigma)=
    \xi(v_0)=\xi(\pi^{\xi}(b_{\sigma})).$$
    So $\pi^{\xi}$ preserves the weight of every vertex of $Sd(K)$.  
   Then since $(K,\xi)$ and $(Sd(K),Sd(\xi))$ are both divisibly weighted,
    $\pi^{\xi}: (Sd(K),Sd(\xi))\rightarrow (K,\xi) $ is
    a weight-preserving simplicial map by Lemma~\ref{Lem:Morph-Vertex}.
   \n

 \noindent \textbf{Claim-1:} The chain map
   $Sd_{\#}\circ \pi^{\xi}_{\#} :
   \big(C_*(Sd(K)),\partial^{Sd(\xi)} \big) \rightarrow 
   \big( C_*(Sd(K)),\partial^{Sd(\xi)} \big)$
    is carried 
   by an acyclic carrier $\Phi^{\xi}$ on $ C_*(Sd(K),\partial^{Sd(\xi)} )$ defined by
    \begin{equation} \label{Equ:Acyclic-Carrier}
      \Phi^{\xi}([b_{\sigma_0}\cdots b_{\sigma_l}])=
      \big( C_*(Sd(\sigma_l)),\partial^{Sd(\xi)} \big), \ 
      \sigma_0\subsetneq \cdots\subsetneq \sigma_l \in K.
    \end{equation}  
 
  Indeed, $\Phi^{\xi}$ is an acyclic carrier  since each $\big( C_*(Sd(\sigma_l)),\partial^{Sd(\xi)} \big)$ 
  is acyclic by Lemma~\ref{Lem:Wt-Div-Simplex-SD}.
  In addition, each vertex $b_{\sigma_i}$ of the simplex $[b_{\sigma_0}\cdots b_{\sigma_l}]$ in~\eqref{Equ:Acyclic-Carrier} is clearly mapped to a vertex of 
  $\sigma_l$. This implies that 
  $Sd_{\#}\circ \pi^{\xi}_{\#}$ is carried by $\Phi^{\xi}$. So Claim-1 is proved.\n 
      
 Moreover, since $\pi^{\xi}$ is weight-preserving and $Sd(v)=v$ for every vertex $v$ of $K$, the chain map $Sd_{\#}\circ \pi^{\xi}_{\#}$ extends to an augmentation-preserving
 map on the augmented chain complexes
 $\big( Sd(K),\partial^{Sd(\xi)},  \varepsilon^{Sd(\xi)} \big)$.      
        Then since $\mathrm{id}_{C_*(Sd(K))}$ is also an augmentation-preserving chain map carried by $\Phi^{\xi}$,  
     Theorem~\ref{Thm:Acyclic-Carrier} implies
   $$Sd_*\circ \pi^{\xi}_* = \mathrm{id}:
   H_*\big( Sd(K),\partial^{Sd(\xi)} \big)\rightarrow H_*\big( Sd(K),\partial^{Sd(\xi)}\big).$$

 \noindent \textbf{Claim-2:} $\pi^{\xi}_{\#}\circ 
 Sd_{\#} = \mathrm{id}_{C_*(K)}: (C_*(K),\partial^{\xi})\rightarrow (C_*(K),\partial^{\xi})$.\n
  
  Obviously, $\pi^{\xi}_{\#}\circ Sd_{\#}$ agrees with $\mathrm{id}_{C_*(K)}$ on $C_0(K)$.     
    Assume that $\pi^{\xi}_{\#}\circ Sd_{\#}$ agrees with $\mathrm{id}_{C_*(K)}$ on $C_{<n}(K)$, $n\geq 1$. 
   For an oriented $n$-simplex $\sigma=[v_0,\cdots,v_n]$ of $K$, we have
    \begin{align} \label{Equ:pi-Sd}
       \pi^{\xi}_{\#}\circ Sd_{\#}(\sigma) & \overset{\eqref{Equ:Sd-simple}}{=} 
     \pi^{\xi}_{\#} \big(  b_{\sigma}\cdot (Sd_{\#}( \partial\sigma)) \big)  \overset{\eqref{Equ:sigma-cond}}{=} v_0 \cdot \big(  \pi^{\xi}_{\#}\circ Sd_{\#} (\partial\sigma) \big)\\
  (\text{by induction})   &= v_0\cdot \partial \sigma =  v_0\cdot \Big(
  \sum^n_{j=0} (-1)^j [v_0,\cdots, \widehat{v}_j,\cdots,v_n] \Big) \notag \\
  &= v_0\cdot [v_1,\cdots,v_n] =\sigma. \notag
       \end{align}
   So Claim-2 is proved. Then $\pi^{\xi}_{\#}$ is a chain homotopy inverse of $Sd_{\#}$ and so
    \begin{equation} \label{Equ:Iso-Comp-H-2}
   \pi^{\xi}_*\circ Sd_* =\mathrm{id}: H_*(K,\partial^{\xi})\rightarrow
  H_*(K,\partial^{\xi}).
  \end{equation} 
  Therefore, $Sd_*$ is an isomorphism.
  \n
  (ii) Since
  the inversion $\widehat{Sd(\xi)}$ of $Sd(\xi)$ 
  is not equal to  $Sd(\widehat{\xi})$ in general, we cannot derive
  (ii) by simply applying the result in (i) to $\widehat{\xi}$.
  Suppose $\sigma=\{v_0,\cdots, v_n\}$ is an $n$-simplex of $K$ whose vertices are ordered as in~\eqref{Equ:sigma-des-weight}. Then
   \begin{equation} \label{Equ:sigma-hat}
     \xi(\sigma)=\xi(v_0) \mid \cdots \mid \xi(v_n)=\widehat{\xi}(\sigma).
     \end{equation}
 
 For an $n$-simplex $\{b_{\sigma_0}\cdots b_{\sigma_n}\}$ in $Sd(\sigma)$ where
     $\sigma_0\subsetneq \cdots\subsetneq \sigma_n=\sigma \in K$,
     $$\widehat{Sd(\xi)}(b_{\sigma_i})=Sd(\xi)(b_{\sigma_i}) = \xi(\sigma_i), \ 0\leq i \leq n.$$
    Since $\xi$ is assumed to be descending,
    $\xi(\sigma)=\xi(\sigma_n) \mid \xi(\sigma_{n-1})
     \mid \cdots \mid \xi(\sigma_0)$. Hence
    $$ \widehat{Sd(\xi)}(b_{\sigma_n}) \mid \widehat{Sd(\xi)}(b_{\sigma_{n-1}}) \mid \cdots \mid \widehat{Sd(\xi)}(b_{\sigma_0})  .$$
  Then because the inversion 
  $\widehat{Sd(\xi)}$ of $Sd(\xi)$ is an ascending weight, we obtain 
   \begin{equation} \label{Equ:Sd-xi-widehat}
     \widehat{Sd(\xi)}([b_{\sigma_0}\cdots b_{\sigma_n}])
 = \widehat{Sd(\xi)}(b_{\sigma_0})= \xi(\sigma_0).
 \end{equation}
 Note that here $\sigma_0$ is just a vertex of $\sigma$. So according to~\eqref{Equ:sigma-hat}, we obtain
 $$\widehat{Sd(\xi)}([b_{\sigma_0}\cdots b_{\sigma_n}]) =  \xi(\sigma_0) \mid \xi(v_n)=\widehat{\xi}(\sigma). $$
 Then parallel to the definition of $Sd_{\#}$
 in~\eqref{Equ:Bary-Subdiv-Form}, we define a linear map
 $\widehat{Sd}_{\#}$ by:
  \begin{equation} \label{Equ:Bary-Subdiv-Form-widehat}
   \widehat{Sd}_{\#}(\sigma)= \sum_{\sigma_0\subsetneq \cdots\subsetneq \sigma_n = \sigma} 
 \varepsilon(\sigma_0,\cdots,\sigma_n) \cdot
 \frac{\widehat{\xi}(\sigma)}{\widehat{Sd(\xi)}([b_{\sigma_0}\cdots b_{\sigma_n}])}  [b_{\sigma_0}\cdots b_{\sigma_n}]. 
 \end{equation}
 Similarly to~\eqref{Equ:Sd-simple}, we can also write the
 definition of $\widehat{Sd}_{\#}$ inductively as 
  \begin{equation} \label{Equ:Sd-simple-hat}
    \widehat{Sd}_{\#}(\sigma)=b_{\sigma}\cdot \widehat{Sd}_{\#}(\partial^{\widehat{\xi}} \sigma), \ \text{where}\  \widehat{Sd}_{\#}(v)=v, \, \forall v\in K^{(0)}.
    \end{equation} 
 By a similar argument as in the proof of  
 Lemma~\ref{Lem:Sd-Weight-Chain}, we can show that
 $\widehat{Sd}_{\#}$ is a chain map from
 $C_*(K,\partial^{\widehat{\xi}})$ to 
 $C_* \big(Sd(K),\partial^{\widehat{Sd(\xi)}} \big)$.\n
 
 Moreover, we can prove that $\widehat{Sd}_{\#}$ is a chain homotopy equivalence. Again, consider the simplicial map $\pi^{\xi}$ defined in~\eqref{Equ:sigma-cond}. By definition, $\widehat{\xi}$ and $\widehat{Sd(\xi)}$ agrees with $\xi$ and $Sd(\xi)$, respectively, at each vertex. Then since $\pi^{\xi}:Sd(K)\rightarrow K$ preserves the weight of every vertex of $Sd(K)$, 
 $\pi^{\xi}: (Sd(K),
   \widehat{Sd(\xi)})\rightarrow (K,\widehat{\xi})$ is also weight-preserving. Denote the chain map 
   determined by $\pi^{\xi}$ here by 
   $$ \widehat{\pi}^{\xi}_{\#}: C_*(Sd(K),
   \partial^{\widehat{Sd(\xi)}})\rightarrow C_*(K,\partial^{\widehat{\xi}})  $$
   to distinguish it from the chain map
   $\pi^{\xi}_{\#}: C_*(Sd(K),
   \partial^{Sd(\xi)}) \rightarrow  C_*(K,\partial^{\xi})$.\n
  
  We claim that $\widehat{\pi}^{\xi}_{\#}$ is a chain homotopy inverse of $\widehat{Sd}_{\#}$. Indeed, we only need to check that
     \begin{itemize}
     \item  The chain map
   $\widehat{Sd}_{\#}\circ \widehat{\pi}^{\xi}_{\#} :
   \big(C_*(Sd(K)),\partial^{\widehat{Sd(\xi)}} \big) \rightarrow 
   \big( C_*(Sd(K)),\partial^{\widehat{Sd(\xi)}} \big)$
    is carried 
   by an acyclic carrier $\widehat{\Phi}^{\xi}$ on $ C_*(Sd(K),\partial^{\widehat{Sd(\xi)}} )$ defined by
    \begin{equation} \label{Equ:Acyclic-Carrier-widehat}
      \widehat{\Phi}^{\xi}([b_{\sigma_0}\cdots b_{\sigma_l}])=
      \big( C_*(Sd(\sigma_l)),\partial^{\widehat{Sd(\xi)}} \big), \ 
      \sigma_0\subsetneq \cdots\subsetneq \sigma_l \in K.
    \end{equation}

     \item $\widehat{\pi}^{\xi}_{\#}\circ 
 \widehat{Sd}_{\#} = \mathrm{id}_{C_*(K)}: (C_*(K),\partial^{\widehat{\xi}})\rightarrow (C_*(K),\partial^{\widehat{\xi}})$.
     \end{itemize}
   \n
   
  Since the argument is almost the same as that for $Sd_{\#}$ using Lemma~\ref{Lem:Wt-Div-Simplex-SD}, we leave it to the reader. Then $\widehat{Sd}_{\#}$ induces an isomorphism 
  $$\widehat{Sd}_*: 
   H_*(K,\partial^{\widehat{\xi}})\rightarrow H_* \big(Sd(K),\partial^{\widehat{Sd(\xi)}} \big).$$   
   This finishes the proof of the theorem.   
\end{proof}

 \subsection{Invariance of AW-homology and DW-homology} \label{Subsec:Invar-St-Homol}

  \ \n

 In this section, we prove that AW-homology and DW-homology of a weighted polyhedron are invariants
    under isomorphisms and more generally under (strong) W-homotopy equivalences of weighted polyhedra. We need the following notion.

     \begin{defi}[W-simplicial Approximation]
       Suppose $(X,\lambda)$ and $(X',\lambda')$ are two weighted polyhedra of the same type.
       Let $(K,\xi)$ and $(K',\xi')$ be divisibly weighted triangulations of $(X,\lambda)$ and $(X',\lambda')$, respectively. For a continuous map $f: X\rightarrow X'$, if $\varrho: K\rightarrow K'$
    is a simplicial approximation of $f$ and moreover $\varrho$ is a morphism from
  $(K,\xi)$ to $(K',\xi')$, then we call 
  $\varrho$ a \emph{W-simplicial approximation} of $f$. 
     \end{defi}

    The following lemma is a bit surprising at first look. But it follows naturally from the definition of W-continuous map.
    
     \begin{lem} \label{Lem:W-simp-Approx-0}
     Let $(X,\lambda)$ and $(X',\lambda')$ be two weighted polyhedra of the same type. Let $(K,\xi)$ and $(K',\xi')$ be divisibly weighted triangulations of $(X,\lambda)$ and $(X',\lambda')$, respectively.
     For a W-continuous map $f:(X,\lambda)\rightarrow (X',\lambda')$,
      if $\varrho: K\rightarrow K'$
    is a simplicial approximation of $f:X\rightarrow X'$,
    then $\varrho$ must be a W-simplicial approximation
    of $f$ and $\overline{\varrho}: X\rightarrow X'$ is W-homotopic to $f$.
     \end{lem}
     \begin{proof}
        We assume that $(X,\lambda)$ and $(X',\lambda')$ are of ascending type below. The proof for descending type weighted polyhedra is completely parallel. \n
        
    First of all, since $\varrho: K\rightarrow K'$
    is a simplicial approximation of $f:X\rightarrow X'$,  
        \begin{equation} \label{Equ:Car-sub}
  \overline{\varrho}(x)\in |\mathrm{Car}_{K'}(f(x))|, \ \text{for all}\ x\in X=|K|.
  \end{equation}

 Besides, since $f$ is W-continuous, by Definition~\ref{Def:W-cont-map} we obtain
 \begin{equation} \label{Equ:lambda-sub}
  \lambda'(f(x)) \mid \lambda(x), \ \text{for all}\ x\in X. 
  \end{equation}
   So for any vertex $v$ of $K$, we have
  $\lambda'(f(v)) \mid \lambda(v) = \xi(v)$.    
  Moreover, since $\varrho(v)$ is a vertex of 
 $\mathrm{Car}_{K'}(f(v))$ and by our assumption $(K',\xi')$ is ascending, we obtain
   $$ \xi'(\varrho(v)) \mid
   \xi'\big(\mathrm{Car}_{K'}(f(v))\big) = \lambda'(f(v)). $$
  So we obtain 
  $$\xi'(\varrho(v))\mid \lambda(v)=\xi(v).$$   
   This implies that $\varrho$ is a morphism from
   $(K,\xi)$ to $(K',\xi')$ since $(K,\xi)$ and $(K',\xi')$ are both divisibly weighted (see Lemma~\ref{Lem:Morph-Vertex}). \n
   Next, we prove that $\overline{\varrho}$ is W-homotopic to $f$. Indeed,
   the line segment between $\overline{\varrho}(x)$
   and $f(x)$ in $|\mathrm{Car}_{K'}(f(x))|$ for any $x\in X$ determines a homotopy $H$ from $\overline{\varrho}$ to $f$, that is 
   \begin{equation} \label{Equ:Homotopy-varrho-f}
    H(x,t)=(1-t)\overline{\varrho}(x) + tf(x),\
   t\in[0,1], \ x\in X.
   \end{equation}
  Moreover, by~\eqref{Equ:Car-sub},
   $\mathrm{Car}_{K'}(\overline{\varrho}(x))$ is a face of $\mathrm{Car}_{K'}(f(x))$. This implies
   $$\lambda'(\overline{\varrho}(x)) = \xi'\big( \mathrm{Car}_{K'}(\overline{\varrho}(x)) \big) \mid \xi'\big(\mathrm{Car}_{K'}(f(x)) \big)=\lambda'(f(x)).$$ 
 For $0< t\leq 1$, observe that $H(x,t)$ lies in the relative
  interior of $|\mathrm{Car}_{K'}(f(x))|$, which implies that
  $\mathrm{Car}_{K'}(H(x,t))=\mathrm{Car}_{K'}(f(x))$
  and so $\lambda'(H(x,t))=\lambda'(f(x))$.
  So for any $x\in X$ and $t\in [0,1]$, 
   $$ \lambda'(H(x,t))\mid \lambda'(f(x)) \mid \lambda(x) = \lambda((x,t)).$$
  Therefore, $H$ is a W-continuous map and so
   $H$ is a W-homotopy from $\overline{\varrho}$ to $f$.
     \end{proof}

      Note that even if $f: (X,\lambda)\rightarrow (X',\lambda')$ is weight-preserving, a
  W-simplicial approximation $\varrho: (K,\xi)\rightarrow (K',\xi')$ of $f$ is not necessarily weight-preserving by our construction in Lemma~\ref{Lem:W-simp-Approx-0}. But
  we will prove in the following lemma that there always exists a
  weight-preserving simplicial approximation for a 
  weight-preserving map.   
 \n
     
 \begin{lem}\label{Lem:Weight-Preserv-Simp-Approx}
  Suppose $(X,\lambda)$ and $(X',\lambda')$ are two weighted polyhedra of the same type. If $f:(X,\lambda)\rightarrow (X',\lambda')$ is a weight-preserving continuous map, then there exists a weight-preserving simplicial approximation of $f$.
 \end{lem} 
 \begin{proof}  
 We assume $(X,\lambda)$ and $(X',\lambda')$ are ascending type weighted polyhedra in our proof. The proof for the descending type is parallel.  
 By Lemma~\ref{Lem:W-simp-Approx-0}, we can first 
 construct 
 an ordinary W-simplicial approximation of $f$, denoted by 
 $$\varrho: (K,\xi)\rightarrow (K',\xi'),$$
  where $(K,\xi)$ and $(K',\xi')$ can be assumed to be divisibly weighted triangulations of $(X,\lambda)$ and $(X',\lambda')$, respectively (see Corollary~\ref{Cor:Div-Weight-Triangl}). Then in the following, we show how to construct a weight-preserving simplicial approximation of $f$ 
 via iterated barycentric subdivisions of $K$. 
 Note that
 the strata of $(X,\lambda)$ may not be compact. So we cannot directly apply the
 ordinary simplicial approximation theorem
 to each stratum of $(X,\lambda)$ to prove our result.  \n

 Let $(K_n, \xi)$ and $(K'_{n'}, \xi')$, $n\in \mathrm{Im}(\xi)$ and $n'\in\mathrm{Im}(\xi')$, be the W-filtrations of $(K,\xi)$ and $(K',\xi')$, respectively (see Definition~\ref{Def:W-filtration}). Note that $\mathrm{Im}(\xi)$ is a subset of $\mathrm{Im}(\xi')$ since $f$ is weight-preserving.
   Let $n_0$ be the largest integer in $\mathrm{Im}(\xi)$
   so that $\varrho$ is weight-preserving on $K_{n_0}$.
       Let $n_1$ be the
   smallest integer in $\mathrm{Im}(\xi)$ 
   that is greater than $n_0$.\n
   
   Let $u$ be an arbitrary vertex of $K$ with $\xi(u)=n_1$.
   If $\xi'(\varrho(u))=n_1$, then we do nothing.
   Otherwise,
   $\xi'(\varrho(u))< n_1$ since $\xi'(\varrho(u)) \mid \xi(u)$. So $\xi'(\varrho(u))$
   is a vertex of $K'_{n_0}$. Moreover, since $\varrho$
   is a W-simplicial approximation of $f$, we have 
   $$f ( \mathrm{St}(u,K) )  \subseteq  
   \mathrm{St}(\varrho(u),K') \subseteq 
     |N(K'_{n_0},K')|. $$  
  On the other hand, the weight of any point in $\mathrm{St}(u,K)$ is at least $n_1$ (since $\xi$ is an ascending weight).
 So $f$ being weight-preserving implies that (see Figure~\ref{p:Pushing-Star})
     $$f ( \mathrm{St}(u,K) )  \subseteq 
     |N(K'_{n_0},K')|  \backslash |K'_{n_0}|.$$
   
   By Lemma~\ref{Lem:Deformation-Retract}, there is a weight-preserving deformation retraction $G^{n_0}$
   from $|N(K'_{n_0},K')|  \backslash |K'_{n_0}|$ onto $|S(K'_{n_0},K')|$.
   So there exists a $\varepsilon_u\in (0,1)$ such that
    $$ G^{n_0}_{\varepsilon_u}(f(\mathrm{St}(u,K))) \subseteq U\big(S(K'_{n_0},K'),K'\big) \overset{\eqref{Equ:U-L-K}}{=} \bigcup_{u'\in S(K'_{n_0},K')^{(0)}} \mathrm{St}(u',K'). $$ 
   
   \begin{figure}[h]
         \includegraphics[width=0.68\textwidth]{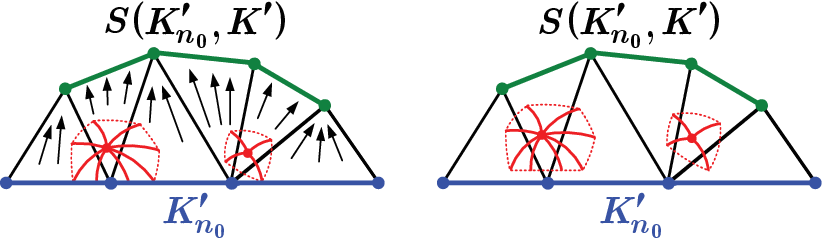}\\
          \caption{Pushing an open star away from $|K'_{n_0}|$}\label{p:Pushing-Star}
      \end{figure}
      
     Since the restriction of $G^{n_0}$ to each simplex is linear, we can easily prove the following claim.
      \n
      
   \begin{itemize}
   \item[\textbf{Claim-1:}] For a set $B \subseteq |N(K'_{n_0},K')  |\backslash |K'_{n_0}|$ and $\varepsilon\in (0,1)$,
     $G^{n_0}_{\varepsilon}(B)$ is contained in $\mathrm{St}(u',K')$ for some vertex $u'\in S(K'_{n_0},K')$ if and only if $B$ itself is contained in $\mathrm{St}(u',K')$.
   \end{itemize} \n
   
   Next, consider the continuous extension of
   $G^{n_0}_{\varepsilon_u}\circ f$ from the open star
   $\mathrm{St}(u,K)$ to the closed star
   $\overline{\mathrm{St}}(u,K)$ along the rays emitting from $u$, denoted by $\overline{G^{n_0}_{\varepsilon_u}\circ f}$. By our construction, 
    $ \overline{G^{n_0}_{\varepsilon_u}\circ f}\big(\overline{\mathrm{St}}(u,K)) \big)$ is the closure of 
   $G^{n_0}_{\varepsilon_u}(f(\mathrm{St}(u,K)))
    $ in $X'$, which is still contained in $U\big(S(K'_{n_0},K'),K'\big) $. 
    Then by the existence of simplicial approximation for the continuous map 
      $$ \overline{G^{n_0}_{\varepsilon_u} \circ f} :  
    |\mathrm{star}(u,K)|=  \overline{\mathrm{St}}(u,K) \rightarrow U\big(S(K'_{n_0},K'),K'\big) ,$$
       there exists an
   integer $m_u$ such that: for any $l\geq m_u$
   and every vertex $v$ of $Sd^{l}(\mathrm{star}(u,K))$, the open star $\mathrm{St}\big(v, Sd^{l}(\mathrm{star}(u,K)) \big)$ is mapped by $\overline{G^{n_0}_{\varepsilon_u} \circ f}$ into the open star 
  $\mathrm{St}(v',K')$ of some vertex $v'\in S(K'_{n_0},K')$. \n
  
  Since there are only finitely many
  simplices in $K$, we can define
  $$ \varepsilon= \underset{\xi(u)=n_1}{\mathrm{max}} \varepsilon_u \in (0,1), \ \ \ m= \underset{\xi(u)=n_1}{\mathrm{max}} m_u. $$
  Moreover, since $\varrho$ is a simplicial approximation
  of $f$, we can define a simplicial approximation of
   $f$, denoted by
    $$\varrho^{m}: Sd^m(K)\rightarrow K'$$
     with
    $\varrho^m(v)=\varrho(u)\in K'$ where $u$ is a vertex of $K$ with $v\in \mathrm{St}(u,K)$. Note that such a vertex $u$ is not unique in general.
   \n
  
  \begin{itemize}
   \item[\textbf{Claim-2:}] We can define $\varrho^m$ properly so that it preserves the weight of any vertex 
  of $Sd^m(K)$ with weight less or equal to $n_1$.  
   \end{itemize}\n        
   
    Let $v$ denote a vertex in $Sd^m(K)$ with weight $Sd^m(\xi)(v)=\lambda(v)\leq n_1$ below. \n
    \begin{itemize}
\item[Case 1.] If $\lambda(v)=n_1$, then $v$ must lie in   $\mathrm{St}(u,K)$ for some vertex $u$ of $K$ with $\xi(u)=n_1$ and so
 $\mathrm{St}\big(v, Sd^{m}(\mathrm{star}(u,K)) \big) \subseteq \mathrm{St}(u,K)$. Moreover, by the preceding argument, $G^{n_0}_{\varepsilon} \circ f$ maps
 $\mathrm{St}\big(v, Sd^{m}(\mathrm{star}(u,K)) \big)$ into $\mathrm{St}(v',K')$ for some vertex $v'\in S(K'_{n_0},K')$. Then by Claim-1, we can assert that
  $$ f \big(\mathrm{St}\big(v, Sd^{m}(\mathrm{star}(u,K)) \big) \big) \subseteq \mathrm{St}(v',K').$$
  So we can define $\varrho^m$ at such a vertex $v$ by
   $\varrho^m(v)=v'$. Notice that $\xi'(v')\geq n_1>n_0$ since $v'\in S(K'_{n_0},K')$. But on the other hand,
   $\xi'(v')$ cannot be greater than $n_1$ since otherwise
   $f(v)\in \mathrm{St}(v',K')$ would have weight
   greater than $n_1$ as well, which contradicts the assumption that $f$ is weight-preserving. Therefore, 
   we must have $\xi'(v')=n_1=\lambda(v)$.
     \n
 
    \item[Case 2.] If $\lambda(v)<n_1$, then $v\in |K_{n_0}|$. So
   there exists a vertex $v^*$ in $K_{n_0}$ with
    $$   v\in \mathrm{St}(v^*,K)\ \text{and}\  
    \lambda(v)=\xi(v^*).$$ 
    This implies that $
    \mathrm{St}(v,Sd^{m}(\mathrm{star}(u,K))) \subseteq \mathrm{St}(v^*,K)$. 
         In addition, since $\varrho$
    is assumed to preserve the weights
   of all the vertices of $K_{n_0}$, we obtain 
    $$  \xi'(\varrho(v^*))=\xi(v^*)=\lambda(v).$$
     Moreover, since $\varrho$ is a W-simplicial approximation of $f$, 
    $$f\big(\mathrm{St}(v,Sd^{m}(\mathrm{star}(u,K)))\big) \subseteq f\big(\mathrm{St}(v^*,K)\big)
    \subseteq \mathrm{St}(\varrho(v^*),K'). $$
      So we can define $\varrho^m$ at such a vertex $v$ by
   $\varrho^m(v)=\varrho(v^*)$. This proves Claim-2.
    \end{itemize}
 \n 
 
  By repeating the above modification of $\varrho$
   for all the weight values in $\mathrm{Im}(\xi)$ greater than $n_0$ from small to large, we will obtain a weight-preserving simplicial map $\varrho^N: (Sd^N(K),Sd^N(\xi))\rightarrow (K',\xi')$ for some large enough integer $N$ which is a simplicial approximation
    of $f$.
  \end{proof}

  \begin{thm}[Existence of W-simplicial Approximation] \label{Thm:W-Simpl-Approx-Exist}
 Suppose $(X,\lambda)$ and $(X',\lambda')$ are two weighted polyhedra of the same type and
$f: (X,\lambda)\rightarrow (X',\lambda')$ is a W-continuous map. Let $(K,\xi)$ and $(K',\xi')$ be divisibly weighted triangulations of $(X,\lambda)$ and $(X',\lambda')$, respectively.  
 \begin{itemize}
 \item[(a)] There always exists a large enough integer $m$ such that $f$ has a W-simplicial approximation
  $\varrho: (Sd^m(K),Sd^m(\xi))\rightarrow (K',\xi')$.  Moreover, if the map $f$ is weight-preserving, we can choose $\varrho$ to be weight-preserving as well
 and $\overline{\varrho}$ is strongly W-homotopic to $f$.\n
  
 \item[(b)] If $\varrho': (Sd^m(K),Sd^m(\xi))\rightarrow (K',\xi')$ is another W-simplicial approximation of $f$, then $\varrho'$ is contiguous to $\varrho$ and so 
  $$\varrho_*=
    \varrho'_*: H_*\big(Sd^m(K),\partial^{Sd^m(\xi)}\big )\rightarrow H_*(K',\partial^{\xi'}).$$
    
  \item[(c)] If $f$ is weight-preserving and
     $\varrho$ and $\varrho'$ are both weight-preserving simplicial approximations of $f$, then $\varrho$ and $\varrho'$ are strongly contiguous.   
 \end{itemize}
  \end{thm}
  \begin{proof}
   (a): The existence of such a W-simplicial approximation
 $\varrho$ of $f$ follows from 
   Theorem~\ref{Thm:Simp-Approx} and 
   Lemma~\ref{Lem:W-simp-Approx-0} immediately.
   When $f$ is weight-preserving, we can choose
   $\varrho$ to be weight-preserving by the proof of Lemma~\ref{Lem:Weight-Preserv-Simp-Approx}. Moreover,
   the homotopy $H$ between $\overline{\varrho}$ and $f$
   defined in~\eqref{Equ:Homotopy-varrho-f} 
   is weight-preserving in this case since all the points in
   $|\mathrm{Car}_{K'}(f(x))|^{\circ}$ have the same weight as $f(x)$. So $\overline{\varrho}$ is strongly W-homotopic to $f$.
    \n

    (b) and (c): For a simplex $\sigma=\{v_0,\cdots,v_n\}$ of $Sd^m(K)$, choose a point
    $x\in |\sigma|^{\circ}$. Then since $\varrho$ and $\varrho'$
   are both W-simplicial approximations of $f$,    
    the vertices
   $\varrho(v_0),\cdots,\varrho(v_n), \varrho'(v_0),\cdots, \varrho'(v_n)$ must all belong to the simplex $\mathrm{Car}_{K'}(f(x))$. Then by Lemma~\ref{Lem:Contiguity}, $\varrho'$ is contiguous to $\varrho$. Moreover, if
    $\varrho$ and $\varrho'$ are both weight-preserving, by Lemma~\ref{Lem:Contiguity} $\varrho$ is strongly contiguous to $\varrho'$. 
  \end{proof}

  \begin{defi} \label{Defi:Induced-Homology}
    Let 
  $f: (X,\lambda)\rightarrow (X',\lambda')$ be a W-continuous map between two weighted polyhedra $(X,\lambda)$ and $(X',\lambda')$ of the same type. Let $(K,\xi)$ and $(K',\xi')$ be divisibly weighted triangulations of $(X,\lambda)$ and $(X',\lambda')$, respectively. If
  $\varrho: Sd^m(K) \rightarrow K'$
  is a W-simplicial approximation of $f$, define 
  \begin{equation} \label{Equ:f-star}
   f_*:= \varrho_* \circ Sd^m_* : 
   H_*( K, \partial^{\xi}) \rightarrow 
    H_*( Sd^m(K),\partial^{Sd(\xi)}) 
   \rightarrow H_*(K', \partial^{\xi'}).
   \end{equation}
     
Note that here $\varrho$ may not induce a morphism
  from $(K, \widehat{Sd^m(\xi)})$ to $(K', \widehat{\xi}')$ if $\varrho$ is not weight-preserving. But when $f$ is weight-preserving, we can choose $\varrho$ to be weight-preserving by Theorem~\ref{Thm:W-Simpl-Approx-Exist}. Then since
   $(K, \widehat{Sd^m(\xi)})$ and $(K', \widehat{\xi}')$
   have the same weight at each vertex as 
    $(K, Sd^m(\xi))$ and $(K', \xi')$, respectively, 
   we obtain a weight-preserving simplicial map 
       $$ \widehat{\varrho}: (Sd^m(K),\widehat{Sd^m(\xi)})\rightarrow (K',\widehat{\xi'}).$$
     By composing with the chain map $\widehat{Sd}_{\#}$ (see Theorem~\ref{Thm:Sd-Homology-Isom-1}), we obtain
     \[ \widehat{\varrho}_{\#}\circ \widehat{Sd}_{\#}^m : 
   C_*(K, \partial^{\widehat{\xi}}) \rightarrow  C_*(Sd^m(K),\partial^{\widehat{Sd^m(\xi)}})\rightarrow C_*(K',\partial^{\widehat{\xi'}}) . \] 
    which further induces a map on the homology groups, denoted by
     \begin{equation} \label{Equ:f-star-hat}
      \widehat{f}_*:= \widehat{\varrho}_* \circ \widehat{Sd}^m_* : 
   H_*( K, \partial^{\widehat{\xi}}) \rightarrow 
    H_*( Sd^m(K),\partial^{\widehat{Sd^m(\xi)}}) 
   \rightarrow H_*(K', \partial^{\widehat{\xi'}}).
   \end{equation}
      \end{defi}
  
 By the following lemma, both $f_*$ and $\widehat{f}_*$ are well-defined (i.e.
 independent on the W-simplicial approximation $\varrho$).
 
  \begin{lem} \label{Lem:Well-Def}
 Let $(X,\lambda)$ and $(X',\lambda')$ be two weighted polyhedra of the same type. Suppose $f: (X,\lambda)\rightarrow (X',\lambda')$ is a W-continuous map. If 
 $$\varrho: (Sd^m(K),Sd^m(\xi)) \rightarrow (K',\xi'),\ \   \kappa: (Sd^{m+r}(K), Sd^{m+r}(\xi)) \rightarrow (K',\xi')$$
  are both W-simplicial approximations of $f$, then we have
  $$\varrho_* \circ Sd^m_* = \kappa_* \circ Sd^{m+r}_*.$$
  If moreover $f$, $\varrho$ and $\kappa$ are all weight-preserving maps, then we also have 
  $$ \widehat{\varrho}_* \circ \widehat{Sd}^m_* = 
  \widehat{\kappa}_* \circ \widehat{Sd}^{m+r}_*.$$
   \end{lem}
  \begin{proof}
  By the proof of Theorem~\ref{Thm:Sd-Homology-Isom-1}, we have the following diagram
  \[  \xymatrix{
      H_*(K,\xi) \ar[d]_{Sd^m_*} &  &  H_*(K',\xi')  \\
        H_*(Sd^m(K),Sd^m(\xi)) \ar[urr]_{\varrho_*} 
         \ar@<-.5ex>[d]_{Sd^r_*}&  & \\
     H_*(Sd^{m+r}(K), Sd^{m+r}(\xi)) \ar[uurr]_{\kappa_*} \ar@<-.5ex>[u]_{(\pi^{\xi}_*)^r} & &
                 }                  
  \]
   where $\pi^{\xi}$ is defined as in~\eqref{Equ:sigma-cond} which is weight-preserving. Clearly,
   $$ \varrho \circ (\pi^{\xi})^r: 
       (Sd^{m+r}(K), Sd^{m+r}(\xi)) \rightarrow (K',\xi')$$  is
   also a W-simplicial approximation of $f$. So by 
   Theorem~\ref{Thm:W-Simpl-Approx-Exist}\,(b), 
   $$\kappa_* = 
   \varrho_* \circ (\pi^{\xi}_*)^r.$$
    Then we obtain
   \begin{equation} \label{Equ:composite-equal}
     \kappa_* \circ Sd^{m+r}_* = 
     \varrho_* \circ (\pi^{\xi}_*)^r \circ Sd^{r}_* \circ Sd^{m}_* \overset{\eqref{Equ:Iso-Comp-H-2}}{=}
     \varrho_* \circ Sd^m_*.
    \end{equation}
      
  If $f$, $\varrho$ and $\kappa$ are all weight-preserving, then  
   $\varrho \circ (\pi^{\xi})^r$ is strongly contiguous
   to $\kappa$ by Theorem~\ref{Thm:W-Simpl-Approx-Exist}\,(c).  This implies: 
    $$\kappa_* = 
   \varrho_* \circ (\pi^{\xi}_*)^r, \ \ \
   \widehat{\kappa}_* = 
   \widehat{\varrho}_* \circ (\widehat{\pi}^{\xi}_*)^r.$$
  Similarly to~\eqref{Equ:composite-equal}, we can prove $ \widehat{\kappa}_* \circ \widehat{Sd}^{m+r}_* =\widehat{\varrho}_* \circ \widehat{Sd}^m_*$ from the following diagram:
   \[  \xymatrix{
      H_*(K,\widehat{\xi}) \ar[d]_{\widehat{Sd}^m_*} &  &  H_*(K',\widehat{\xi}')  \\
        H_*\big(Sd^m(K),\widehat{Sd^m(\xi)} \big) \ar[urr]_{\widehat{\varrho}_*} 
         \ar@<-.5ex>[d]_{\widehat{Sd}^r_*}&  & \\
     H_*\big(Sd^{m+r}(K), \widehat{Sd^{m+r}(\xi)} \big) \ar[uurr]_{\widehat{\kappa}_*} \ar@<-.5ex>[u]_{(\widehat{\pi}^{\xi}_*)^r} & &
                 }                  
  \]
    \end{proof}

  \begin{lem}\label{Lem:Functorial}
   Suppose $(X,\lambda), (X',\lambda')$ and $(X'',\lambda'')$ are weighted polyhedra of the same type. 
   \begin{itemize}
    \item[(a)] For any divisibly weighted triangulation $(K,\xi)$ of $(X,\lambda)$ and the identity map
  $\mathrm{id}_X : X\rightarrow X$, the induced 
  map $(\mathrm{id}_X)_* :  H_*(K,\partial^{\xi}) \rightarrow
   H_*(K,\partial^{\xi}) $ and $(\widehat{\mathrm{id}}_X)_* :  H_*(K,\partial^{\widehat{\xi}}) \rightarrow
   H_*(K,\partial^{\widehat{\xi}}) $
   are both the identity maps.\n
  
  \item[(b)] For W-continuous maps $f: (X,\lambda)\rightarrow (X',\lambda')$ and $g: (X',\lambda')\rightarrow (X'',\lambda'')$,
  \begin{align*}
     (g\circ f)_* &= g_*\circ f_* : H_*(K, \partial^{\xi})
  \rightarrow  H_*(K'', \partial^{\xi''}),
  \end{align*}
  where $(K,\xi)$, $(K',\xi')$ and $(K'',\xi'')$
  are divisibly weighted triangulations of
  $(X,\lambda)$, $(X',\lambda')$ and  $(X'',\lambda'')$, respectively. If moreover, $f$ and $g$ are both weight-preserving, then we also have
  \begin{align*}
     (\widehat{g\circ f})_* &= \widehat{g}_*\circ \widehat{f}_* : H_*(K, \partial^{\widehat{\xi}})
  \rightarrow  H_*(K'', \partial^{\widehat{\xi''}}),
  \end{align*}
  \end{itemize}  
  \end{lem}
  \begin{proof}
   (a) Take $\mathrm{id}_K: (K,\xi)\rightarrow (K,\xi)$ as the W-simplicial approximation of $\mathrm{id}_{X}$.\n
   
   (b) Let $\psi: (Sd^r(K'),Sd^r(\xi'))\rightarrow (K'',\xi'')$ be a W-simplicial approximation of $g$.
   Then using $(Sd^r(K'),Sd^r(\xi'))$ as the triangulation
   of $(X',\lambda')$, we can further obtain a 
    W-simplicial approximation $\varphi: (Sd^m(K),Sd^m(\xi)) \rightarrow (Sd^r(K'),Sd^r(\xi'))$ for $f$. Then since $\pi^{\xi}$ is a weight-preserving simplicial map, the composite map
$$(\pi^{\xi})^r\circ \varphi: (Sd^m(K),Sd^m(\xi)) \rightarrow (K',\xi')$$ is also a W-simplicial approximation of $f$ (see the following diagram).
   \[        
        \xymatrix{
           H_*(K,\partial^{\xi})\ar[d]_{Sd^m_*} \ar[r]^{f_*}
                &  H_*(K', \partial^{\xi'}) \ar@<.5ex>[d]^{Sd^r_*} \ar[r]^{g_*} & H_*(K'',\partial^{\xi''}) \\
         H_*(Sd^m(K), \partial^{Sd^m(\xi)})  \ar[r]^{\varphi_*} &  H_*(Sd^r(K'), \partial^{Sd^r(\xi')}) \ar@<.5ex>[u]^{(\pi^{\xi}_*)^r} \ar[ur]_{\psi_*}
                 } 
   \]
   
   Since
   $\psi\circ\varphi: (Sd^m(K),Sd^m(\xi))\rightarrow (K'',\xi'')$ is a W-simplicial approximation of $g\circ f$, we have $ (g\circ f)_* = (\psi\circ\varphi)_*\circ Sd^m_*$.
   On the other hand,
   \begin{align*}
     g_*\circ f_* &=(\psi_*\circ Sd^r_* )\circ   ((\pi^{\xi}_*)^r\circ \varphi_* \circ Sd^m_*)\\
     &\overset{\eqref{Equ:Iso-Comp-H-2}}{=} \psi_*\circ \varphi_* \circ Sd^m_* = (\psi\circ\varphi)_*\circ Sd^m_*
     =(g\circ f)_*.
   \end{align*}
   The proof for $ (\widehat{g\circ f})_* = \widehat{g}_*\circ \widehat{f}_*$ is completely parallel
   by arguing with the following diagram where we choose both $\varphi$ and $\psi$ to be weight-preserving:
   \[        
        \xymatrix{
           H_*(K,\partial^{\widehat{\xi}})\ar[d]_{\widehat{Sd}^m_*} \ar[r]^{\widehat{f}_*}
                &  H_*(K', \partial^{\widehat{\xi'}}) \ar@<.5ex>[d]^{\widehat{Sd}^r_*} \ar[r]^{\widehat{g}_*} & H_*(K'',\partial^{\widehat{\xi''}}) \\
         H_*(Sd^m(K), \partial^{\widehat{Sd^m(\xi)}})  \ar[r]^{\widehat{\varphi}_*} &  H_*(Sd^r(K'), \partial^{\widehat{Sd^r(\xi')}}) \ar@<.5ex>[u]^{(\widehat{\pi}^{\xi}_*)^r} \ar[ur]_{\widehat{\psi}_*}
                 }  
   \]
   \end{proof}

  \begin{thm} \label{Thm:Isomor}
   Suppose $f: (X,\lambda) \rightarrow (X',\lambda')$ is an isomorphism between two weighted polyhedra
   $(X,\lambda)$ and $(X',\lambda')$. If
   $(K,\xi)$ and $(K',\xi')$ are divisibly weighted triangulations of $(X,\lambda)$  and  $(X',\lambda')$, respectively, then the induced homomorphisms $f_*:  H_*( K, \partial^{\xi})  
   \rightarrow H_*(K', \partial^{\xi'})$ and 
   $\widehat{f}_*:  H_*(K, \partial^{\widehat{\xi}})  
   \rightarrow H_*(K', \partial^{\widehat{\xi'}})$ are both isomorphisms.
  \end{thm}
  \begin{proof}
   Let $g=f^{-1}: (X',\lambda')\rightarrow (X,\lambda)$.
   Then by Lemma~\ref{Lem:Functorial}, 
   $$ g_* \circ f_* = (g\circ f)_* = (\mathrm{id}_X)_*, \ \ 
   f_* \circ g_* = (f\circ g)_* = (\mathrm{id}_{X'})_* $$
are both identity maps.  So $f_*$ is an isomorphism. 
The proof for $\widehat{f}_*$ is parallel.
  \end{proof}
  
   We can obtain the following corollary immediately from Theorem~\ref{Thm:Isomor}.  
 
  \begin{cor} \label{Cor:Indepen}
   If $(K,\xi)$ and $(K',\xi')$ are divisibly weighted
   triangulations of the same weighted polyhedron $(X,\lambda)$, then the identity map $\mathrm{id}_X : X\rightarrow X$ determines isomorphisms
   $$(\mathrm{id}_X)_*:  H_*(K, \partial^{\xi})  
  \rightarrow H_*(K', \partial^{\xi'}), \ \
  (\widehat{\mathrm{id}}_X)_*:  H_*(K, \partial^{\widehat{\xi}})  
  \rightarrow H_*(K', \partial^{\widehat{\xi'}}).$$
   \end{cor} 
  
  The above corollary implies that the AW-homology and DW-homology of a weighted polyhedron in Definition~\ref{Defi:Pseudo-Orbi-Homol} are independent on the divisibly weighted triangulation we choose. So these two notions are both well-defined.   
 Moreover, we can conclude from Theorem~\ref{Thm:Isomor} that AW-homology and DW-homology groups are invariants of weighted polyhedra under isomorphisms.
  
   \begin{cor} \label{Cor:Isom-AW-DW}
    Suppose $f: (X,\lambda) \rightarrow (X',\lambda')$ is an isomorphism between two weighted polyhedra
   $(X,\lambda)$ and $(X',\lambda')$. 
   \begin{itemize}
   \item If $(X,\lambda)$ and $(X',\lambda')$ are of ascending type, then $f$ induces isomorphisms:
    $$\qquad f_*: H^{AW}_*(X,\lambda)\rightarrow H^{AW}_*(X',\lambda'), \ \ \widehat{f}_*: H^{DW}_*(X,\lambda)\rightarrow H^{DW}_*(X',\lambda').$$  
      \item If $(X,\lambda)$ and $(X',\lambda')$ are of descending type, then $f$ induces isomorphisms:
       $$\qquad f_*: H^{DW}_*(X,\lambda)\rightarrow H^{DW}_*(X',\lambda'), \ \ \widehat{f}_*: H^{AW}_*(X,\lambda)\rightarrow H^{AW}_*(X',\lambda').$$ 
     \end{itemize}   
   \end{cor}
    
  \n
  
  Next, we prove the invariance of AW-homology and DW-homology with respect to (strong) W-homotopy equivalences of weighted polyhedra.

 \begin{lem} \label{Lem:W-homotopy-Induce}
  Let $(X,\lambda)$ and $(X',\lambda')$ be weighted polyhedra of the same type. 
  Suppose two W-continuous maps 
   $f,g : (X,\lambda)\rightarrow (X',\lambda')$ are  W-homotopic.
   \begin{itemize}
    \item[(a)] If $(X,\lambda)$ and $(X',\lambda')$
    are of ascending type, then
    $$  f_*=g_*: H^{AW}_*(X,\lambda)\rightarrow H^{AW}_*(X',\lambda').$$
    
    \item[(b)] If $(X,\lambda)$ and $(X',\lambda')$
    are of descending type, then
    $$  f_*=g_*: H^{DW}_*(X,\lambda)\rightarrow H^{DW}_*(X',\lambda').$$  
    
    \item[(c)] If $f$ and $g$ are both weight-preserving and are strongly W-homotopic, then they induce the same homomorphisms on both AW-homology groups and DW-homology groups. 
  \end{itemize}
 \end{lem}
 \begin{proof}
   Let $G : (X \times [0, 1],\lambda\times\mathbf{1}) \rightarrow (X',\lambda')$ be a W-homotopy from $f$ to $g$.     
   Suppose $(K,\xi)$ and $(K',\xi')$ are divisibly weighted triangulations of $(X,\lambda)$ and $(X',\lambda')$, respectively. So $(K\times [0,1],\xi\times\mathbf{1})$ is a divisibly weighted triangulation of $(X \times [0, 1],\lambda\times\mathbf{1})$. Then $G$ induces a homomorphism   (see~\eqref{Equ:f-star})
    $$ G_*: H_*(K\times [0,1],\partial^{\xi\times\mathbf{1}}) \rightarrow H_*(K',\partial^{\xi'}).$$
   In addition, let $i_0, i_1: X \rightarrow X\times [0,1]$
   be the maps:
   \[ i_0(x)=(x,0), \  i_1(x)=(x,1), \ x\in X. \]
   Then $f=G \circ i_0$ and $g= G \circ i_1$.  Besides,
   we also consider $i_0$ and $i_1$ as simplicial maps
   from $(K,\xi)$ to $(K\times [0,1],\xi\times\mathbf{1})$ which are both weight-preserving.\n
  
 \textbf{Claim:} $(i_0)_{\#}, (i_1)_{\#}: (C_*(K),\partial^{\xi}) \rightarrow (C_*(K\times [0,1]),\partial^{\xi\times\mathbf{1}})$ are chain homotopic.
 \n
 
 Consider the function $\Phi$ assigning, to each simplex
 $\sigma$ of $K$, the subchain complex $(C_*(\sigma\times [0,1]),\partial^{\xi\times\mathbf{1}})$ of $(C_*(K\times [0,1]),\partial^{\xi\times\mathbf{1}})$. By Proposition~\ref{Prop:K-I-Product} and Lemma~\ref{Lem:Wt-Div-Simplex}, 
  $$H_*(\sigma\times [0,1],\partial^{\xi\times\mathbf{1}})  \cong H_*(\sigma,\partial^{\xi})\cong \begin{cases}
   \Z ,  &  \text{if $j=0$}; \\
   0 ,  &  \text{if $j \geq 1$}.
 \end{cases}$$
  
  So $\Phi$ is an (algebraic) acyclic carrier from
 $(C_*(K),\partial^{\xi})$ to $(C_*(K\times [0,1]),\partial^{\xi\times\mathbf{1}})$.
 Moreover, both $(i_0)_{\#}$ and $(i_1)_{\#}$ are 
 carried by $\Phi$ since both $i_0(\sigma)=\sigma\times \{0\}$ and $i_1(\sigma)=\sigma\times \{1\}$ belong to
 $\Phi(\sigma)$. Then it follows from Theorem~\ref{Thm:Acyclic-Carrier} that
 $(i_0)_{\#}$ is chain homotopic to $(i_1)_{\#}$.
 The claim is proved.\n
 
 By the above claim, we obtain $(i_0)_*=(i_1)_*:
 H_*(K,\partial^{\xi}) \rightarrow H_*(K\times [0,1],\partial^{\xi\times\mathbf{1}})$.
 So by Lemma~\ref{Lem:Functorial} and 
 the definitions of AW-homology and DW-homology,
  $$ f_* = G_* \circ (i_0)_* = G_*\circ(i_1)_* = g_*.$$ 
  This proves (a) and (b).\n
  
  As for (c), since $i_0$ and $i_1$ are both weight-preserving, they also induce chain maps from
  $(C_*(K),\partial^{\widehat{\xi}})$ to $(C_*(K\times [0,1]),\partial^{\widehat{\xi\times\mathbf{1}}})$, denoted by:
  \[ (\widehat{i_0})_{\#}, (\widehat{i_1})_{\#}: (C_*(K),\partial^{\widehat{\xi}}) \rightarrow (C_*(K\times [0,1]),\partial^{\widehat{\xi\times\mathbf{1}}}) \]
  Observe that $\widehat{\xi\times\mathbf{1}}$ equals 
  $\widehat{\xi}\times\mathbf{1}$ on $K\times [0,1]$. So
  we can use a similar argument as the proof of the above claim to show that 
  $(\widehat{i_0})_{\#}$ and  $(\widehat{i_1})_{\#}$ are chain homotopic. So  $(\widehat{i_0})_{\#}$ and  $(\widehat{i_1})_{\#}$ induce the same homomorphism on homology, i.e.
  $$(\widehat{i_0})_*=(\widehat{i_1})_*:
 H_*(K,\partial^{\widehat{\xi}}) \rightarrow H_*(K\times [0,1],\partial^{\widehat{\xi\times\mathbf{1}}}). $$
  Moreover, since $f$ and $g$ are strongly W-homotopic,
  the homotopy $G$ between $f$ and $g$ can be taken to
 be weight-preserving. So $G$ also induces a map (by~\eqref{Equ:f-star-hat})
  $$\widehat{G}_*:  H_*(K\times [0,1],\partial^{\widehat{\xi\times\mathbf{1}}}) \rightarrow H_*(K',\partial^{\widehat{\xi'}}).$$
  Then we obtain
   $$\widehat{f}_* = \widehat{G}_* \circ (\widehat{i_0})_* = \widehat{G}_*\circ(\widehat{i_1})_* = \widehat{g}_*
   :  H_*(K,\partial^{\widehat{\xi}}) \rightarrow H_*(K',\partial^{\widehat{\xi'}}).$$
 This fact together with (a) and (b) imply (c). 
 \end{proof}

 \begin{thm} \label{Thm:Homotopy-Invar}
  Suppose two weighted polyhedra $(X,\lambda)$ and $(X',\lambda')$ of the same type are W-homotopy equivalent.\n 
 \noindent $\mathrm{(a)}$ If $(X,\lambda)$ and $(X',\lambda')$
    are of ascending type, then
    $H^{AW}_*(X,\lambda)\cong H^{AW}_*(X',\lambda')$.
    \n
\noindent  $\mathrm{(b)}$ If $(X,\lambda)$ and $(X',\lambda')$
    are of descending type, then
    $  H^{DW}_*(X,\lambda)\cong H^{DW}_*(X',\lambda')$. \n
    
\noindent  $\mathrm{(c)}$ If $(X,\lambda)$ and $(X',\lambda')$ are 
    strongly W-homotopy equivalent, then
 $$ H^{AW}_*(X,\lambda)\cong H^{AW}_*(X',\lambda'),\ \
 H^{DW}_*(X,\lambda)\cong H^{DW}_*(X',\lambda').$$
 \end{thm}
 \begin{proof}
  This follows immediately from Lemma~\ref{Lem:Functorial} and Lemma~\ref{Lem:W-homotopy-Induce}. 
 \end{proof}

 \section{AW-homology and DW-homology with coefficients} \label{Sec:Relation} 
   
    Let $(K,\mu)$ be a weighted simplicial complex. Given any abelian group $G$,  we can apply the tensor product functor
   $\otimes G$  to $(C_*(K),\partial^{\mu})$ to obtain a chain complex denoted by
    $$ (C_*(K;G),\partial^{\mu}) := (C_*(K)\otimes G, \partial^{\mu}\otimes \mathrm{id}_G). $$
    Let $H_*(K,\partial^{\mu};G)$ denote the homology group of $(C_*(K;G),\partial^{\mu})$.
   By the algebraic universal coefficient theorem of homology (see Hatcher~\cite[\S\,3.A]{Hatcher02}), 
    \begin{align*} 
    H_j(K,\partial^{\mu};G) &\cong   \big( H_j(K,\partial^{\mu})\otimes G\big)\, \scalebox{1.5}{$\oplus$}\, \mathrm{Tor}\big(H_{j-1}(K,\partial^{\mu}),G\big),\,  j\in \Z.
    \end{align*}
    
 If $A$ is a simplicial subcomplex of $K$,
 we can similarly define $H_*(K,A, \partial^{\mu};G)$ which is the homology group of the chain complex
 $$(C_*(K,A;G),\partial^{\mu}) := \big( (C_*(K)\slash C_*(A))
   \otimes G, \partial^{\mu}\otimes \mathrm{id}_G \big).$$
   
  For a weighted polyhedron $(X,\lambda)$, the \emph{AW-homology and DW-homology of $(X,\lambda)$ with $G$-coefficients} are defined by the corresponding notions 
   of a divisibly weighted triangulation $(K,\xi)$ of $(X,\lambda)$, respectively, denoted by $H^{AW}_*(X,\lambda;G)$ and
   $H^{DW}_*(X,\lambda;G)$. Then by the universal coefficient theorem,
   \begin{align*}
    H^{AW}_j(X,\lambda;G) & \cong   \big( H^{AW}_j(X,\lambda)\otimes G\big)\, \scalebox{1.5}{$\oplus$}\, \mathrm{Tor}\big(H^{AW}_{j-1}(X,\lambda),G\big),\,  j\in \Z; \\
      H^{DW}_j(X,\lambda;G) & \cong   \big( H^{DW}_j(X,\lambda)\otimes G\big)\, \scalebox{1.5}{$\oplus$}\, \mathrm{Tor}\big(H^{DW}_{j-1}(X,\lambda),G\big),\,  j\in \Z.
    \end{align*}
    
   The following theorem tells us that with coefficients in some special fields, the AW-homology and DW-homology of a weighted polyhedron $(X,\lambda)$ 
    are both isomorphic to the ordinary simplicial (or singular) homology of $X$.\n
     
     \begin{thm}\label{thm:Weighted-Homology-Iso}
       Let $(X,\lambda)$ be a weighted polyhedron. If the character of a field $\mathbb{F}$ is relatively prime to the weights of all points of $X$, then 
       $ H^{AW}_*(X,\lambda; \mathbb{F})$ and 
       $ H^{DW}_*(X,\lambda; \mathbb{F})$ are both isomorphic to $H_*(X;\mathbb{F})$.
     \end{thm}
     \begin{proof}
      Let $(K,\xi)$ be a divisibly weighted triangulation of $(X,\lambda)$. For any $n\geq 0$, we can define
       a linear map for the $n$-skeleton $K^{(n)}$ of $K$,     
     as follows
      \begin{align} \label{Def:Xi}
      \Xi: \big((C_*(K^{(n)};\mathbb{F}),\partial^{\xi}\big) &\longrightarrow C_*(K^{(n)};\mathbb{F}).\\
       \sigma &\longmapsto 
        \begin{cases}
   w(\sigma)\cdot\sigma ,  &  \text{if $\xi$ is ascending}; \\
   \frac{1}{w(\sigma)} \cdot \sigma ,  &  \text{if $\xi$ is descending}.
        \end{cases}  \notag
      \end{align}   
   It is easy to check that $\Xi$ is a chain map by the definition of $\partial^{\xi}$ (see~\eqref{Equ:weighted-Boun-AW} and~\eqref{Equ:weighted-Boun-DW}). 
 Note that here $\frac{1}{w(\sigma)}$ is valid because of the assumption on the character of 
 $\mathbb{F}$.\n
      
  By the long exact sequence of weighted simplicial homology
     (\cite[Theorem 2.3]{Daw90}), we obtain a commutative diagram as follows (the coefficients $\mathbb{F}$ are omitted).
    \scriptsize     \[   \xymatrix{
           H_{j+1}(K^{(n)},K^{(n-1)}, \overline{\partial}^{\xi})\ar[d]^{\Xi_*} \ar[r]
                & H_j(K^{(n-1)},\partial^{\xi}) \ar[d]^{\Xi_*} \ar[r] & H_j(K^{(n)},\partial^{\xi}) \ar[d]^{\Xi_*} \ar[r] &   H_{j}(K^{(n)},K^{(n-1)},\overline{\partial}^{\xi}) \ar[d]^{\Xi_*} \ar[r]  & H_{j-1}(K^{(n-1)},\partial^{\xi}) \ar[d]^{\Xi_*}  \\
           H_{j+1}(K^{(n)},K^{(n-1)}) \ar[r]    & H_j(K^{(n-1)}) \ar[r] & H_j(K^{(n)}) \ar[r] & H_{j}(K^{(n)},K^{(n-1)}) \ar[r] &
           H_{j-1}(K^{(n-1)}).
                 } 
                 \]
         \normalsize
         
   For any $n$-simplex $\sigma$ in $K$,
   we can directly compute from the definition that
 $$\Xi_*: H_n(\sigma,\partial \sigma, \overline{\partial}^{\xi};\mathbb{F}) \cong \mathbb{F}  \rightarrow  \mathbb{F} \cong
  H_n(\sigma,\partial \sigma;\mathbb{F})$$ is given by 
   $ \mathbb{F} \xlongrightarrow{\cdot \xi(\sigma)}\mathbb{F}$ if $\xi$ is ascending and $ \mathbb{F} \xlongrightarrow{\cdot \frac{1}{\xi(\sigma)}} \mathbb{F}$ if $\xi$ is descending. 
   So the homomorphism
  $\Xi_*: H_n(\sigma,\partial \sigma,\overline{\partial}^{\xi};\mathbb{F})  \rightarrow
  H_n(\sigma,\partial \sigma;\mathbb{F})$
  is an isomorphism. It follows that 
  $\Xi_* : H_*(K^{(n)},K^{(n-1)}, \overline{\partial}^{\xi};\mathbb{F})
  \rightarrow H_*(K^{(n)},K^{(n-1)}) $ is an isomorphism.  
  Then by the five-lemma (see~\cite[p.\,129]{Hatcher02}), we can inductively prove that $H_*(K^{(n)},\partial^{\xi};\mathbb{F})$ is isomorphic to $H_*(K^{(n)};\mathbb{F})$
         for all $n\geq 0$.         
         So $\Xi_* : H_*(K,\partial^{\xi}; \mathbb{F})\rightarrow H_*(K;\mathbb{F})$ is an isomorphism. Applying the same argument
         to the inversion $\widehat{\xi}$ of $\xi$, we also obtain an isomorphism
         from $ H_*(K,\partial^{\widehat{\xi}}; \mathbb{F})$ to $H_*(K;\mathbb{F})$. Then the theorem follows.
     \end{proof}
     
  In particular, for a weighted polyhedron $(X,\lambda)$ we have isomorphisms with respect to the rational coefficients $\mathbb{Q}$:
    \begin{equation} \label{Equ:Q-Coeff-Iso}
    H^{AW}_*(X,\lambda; \mathbb{Q})\cong  H^{DW}_*(X,\lambda; \mathbb{Q})\cong H_*(X;\mathbb{Q}). 
     \end{equation}  
     
   The above isomorphisms imply that 
   $$H^{AW}_j(X,\lambda) \cong \Z^{\beta_j(X)} \oplus T^{AW}(X,\lambda), \ H^{DW}_j(X,\lambda) \cong \Z^{\beta_j(X)} \oplus T^{DW}(X,\lambda) $$
   where $\beta_j(X)$ is the $j$-th Betti number of $X$ and $T^{AW}(X,\lambda)$ and $T^{DW}(X,\lambda)$
   are torsion groups.   
   So it is the torsion parts of $H^{AW}_*(X,\lambda)$ and $H^{DW}_*(X,\lambda)$ that can really give us new information of the weighted polyhedron that comes from the weight function $\lambda$.

   \section{Examples} \label{Sec:Example}
   
   \subsection{Computation strategies}\ \n
   
   First of all, the excision and Mayer-Vietoris sequence in weighted simplicial homology (see~\cite[Theorem 2.3 and Corollary 2.3.1]{Daw90}) naturally induce the
  \emph{excision} and \emph{Mayer-Vietoris sequence} in
  AW-homology and DW-homology of weighted polyhedra, which are the basic tools for the computation.
  In addition, it is shown in~\cite{WuRenWuXia20} that the method of discrete Morse theory 
  can also be used to compute weighted simplicial homology.
    \n

  Note that the definition of weighted homology groups also makes sense for any $\Delta$-complex
  with a weight function (either ascending or descending). The reader is referred to~\cite{Hatcher02} for the definition of $\Delta$-complex. 
 So a very useful strategy for our computation is: when we compute the AW-homology and DW-homology of a weighted polyhedron $(X,\lambda)$ via a divisibly weighted triangulation $(K,\xi)$, 
  we can replace the simplicial complex $K$ by
  a $\Delta$-complex $L$ as long as $(L,\xi)$ also consists of divisibly weighted simplices.
  We readily call such $(L,\xi)$ a \emph{divisibly weighted $\Delta$-triangulation} of $(X,\lambda)$.
In many cases,  using $\Delta$-complexes can significantly reduce the number of generators of a weighted simplicial chain complex and hence simplify the computation.  
   \n
   
  Another useful notion for our computation is
  algebraic mapping cone (see Davis and Kirk~\cite[Section 11]{DavKirk01}).
  
  \begin{defi}[Algebraic Mapping Cone] \label{Def:Alg-Cone}
   Let $\mathcal{C}_*=(C_p,\partial_p)$ and 
 $\mathcal{C}'_*=(C'_p,\partial'_p)$ be chain complexes.
 For a chain map $\phi:\mathcal{C}_*\rightarrow \mathcal{C}'_*$, the \emph{algebraic mapping cone} of $\phi$
 is the chain complex $Cone(\phi)$ where
 \begin{equation} \label{Equ:Alg-Mapping-Cone}
   Cone(\phi)_p = C_{p-1}\oplus C'_{p}, \ \forall p\in \Z, \ \text{with boundary map}\
     d_p=\begin{pmatrix}
       -\partial_{p-1}  &  0  \\
        \phi &  \partial'_p 
   \end{pmatrix}.
 \end{equation}

     In particular, the \emph{algebraic cone} on
     a chain complex $\mathcal{C}_*$ is 
     $$ Cone(\mathcal{C}_*) := Cone(\mathrm{id}_{\mathcal{C}_*}:
     \mathcal{C}_* \rightarrow \mathcal{C}_*). $$
  \end{defi}   
     
 \begin{lem}[\text{see~\cite[Lemma 11.26]{DavKirk01}}] \label{Lem:Cone-vanish}
    The algebraic cone on a free chain complex is 
    always acyclic.
  \end{lem}
  
 \begin{rem}
  For a simplex $\sigma$, the ordinary simplicial chain complex $C_*(Sd(\sigma))$ is isomorphic to the algebraic cone of
  $C_*(Sd(\partial \sigma))$.  But given a weight function $\mu$ on $\sigma$, the weighted simplicial chain complex $\big(C_*(Sd(\sigma)),\partial^{Sd(\mu)}\big)$ may not be acyclic. So in general, $\big(C_*(Sd(\sigma)),\partial^{Sd(\mu)}\big)$  
 may not be isomorphic to the algebraic cone on $\big(C_*(\partial \sigma),\partial^{Sd(\mu)}\big)$. 
 Indeed, for a simplex $\tau$ of $Sd(\partial \sigma)$, $Sd(\mu)(b_{\sigma}\cdot\tau)=\mu(\sigma)$ may not agree with $Sd(\mu)(\tau)$. 
  \end{rem}

     In the following, we compute the AW-homology and DW-homology of some concrete examples of orbifolds and pseudo-orbifolds in dimension one and two. The reader is referred to~\cite{Thurs77,Scott83,Choi12} for more information on two-dimensional orbifolds. 
     We will use ``$\mathrm{gcd}$'' and ``$\mathrm{lcm}$'' as the abbreviations for greatest common divisor and least common multiple, respectively.\n
      
        \subsection{One-dimensional orbifolds and
        pseudo-orbifolds}\ \n
      
    \begin{exam}[One dimensional compact orbifolds]\label{Exam:1-dim-orbifold}
     It is well known that any one-dimensional compact connected orbifold is isomorphic to one of the following cases (see~\cite{Choi12}).     \begin{itemize}
      \item[(a)] $S^1$ or $[0,1]$ without singular points.\n
     \item[(b)] An interval $\mathcal{I}=[v_1,v_2]\subset \R^1$ with only one singular point $v_1$
     where $G_{v_1}= \Z_2$. \n
     
     \item[(c)] An interval 
     $\mathcal{I}'=[v_1,v_2]\subset \R^1$ with two singular points
     $v_1$, $v_2$ where $G_{v_1}=G_{v_2}=\Z_2$.
 
     \end{itemize}
     
     Note that $\mathcal{I}$ is a descending divisibly weighted $1$-simplex while $\mathcal{I}'$ is not. By Definition~\ref{Def:AW-DW-Orbifold}, we can easily obtain
     $$H^{AW}_j(\mathcal{I})
     \cong H^{AW}_j (\mathcal{I}')
      \cong \begin{cases}
   \Z ,  &  \text{if $j=0$}; \\
   0,  &  \text{if $j\geq 1$}.
 \end{cases}$$
 $$ 
 H^{DW}_j(\mathcal{I})
      \cong \begin{cases}
   \Z,  &  \text{if $j=0$}; \\
   0,  &  \text{if $j\geq 1$}.
 \end{cases} \ \  \ \
H^{DW}_j (\mathcal{I}')
      \cong \begin{cases}
   \Z\oplus \Z\slash 2\Z ,  &  \text{if $j=0$}; \\
   0,  &  \text{if $j\geq 1$}.
 \end{cases}  $$
 
    \end{exam}

    There are much more one-dimensional pseudo-orbifolds based on an interval than orbifolds because the weights of
    the singular points in a pseudo-orbifold could be arbitrary positive integers. For example, let $\mathcal{I}_{(k_1,k_2)}$
    denote the pseudo-orbifold
    $([v_1,v_2],\lambda)$ (see Figure~\ref{Fig:Example-Interval-Sing}) with only two singular points
    $v_1$ and $v_2$ where 
    $$\lambda(v_1)=k_1\geq 2, \ \
    \lambda(v_2)=k_2\geq 2.$$ 
       
     \begin{figure}[h]
        \begin{equation*}
        \vcenter{
            \hbox{
                  \mbox{$\includegraphics[width=0.38\textwidth]{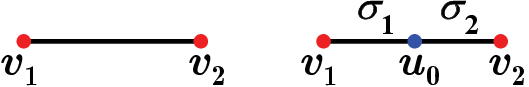}$}
                 }
           }
     \end{equation*}
   \caption{An interval with two singular points at the ends
       } \label{Fig:Example-Interval-Sing}
   \end{figure}  
   
     We can add a vertex $u_0$
    to the interval to obtain a divisibly weighted triangulation of $\mathcal{I}_{(k_1,k_2)}$ with two $1$-simplices $\sigma_1$ and $\sigma_2$. Then the weighted chain complexes for the AW-homology and DW-homology of $\mathcal{I}_{(k_1,k_2)}$ are, respectively:
    \[ \Z^2 \overset{\bordermatrix{%
	       &  \sigma_1   & \sigma_2   \cr
	 u_0    &  k_1       & k_2     \cr
	 v_1    &  1         &  0    \cr
	 v_2    &  0         & 1     
    } }{\xlongrightarrow[\mathrm{AW}]{\qquad \ \qquad\ \qquad\ \quad}} \Z^3, \ \ \ \ \ \Z^2 \overset{\bordermatrix{%
	       &  \sigma_1   & \sigma_2   \cr
	 u_0    &  1       &  1     \cr
	 v_1    &  k_1         &  0    \cr
	 v_2    &  0         & k_2     
    } }{\xlongrightarrow[\mathrm{DW}]{\qquad \ \qquad\ \qquad\ \quad}} \Z^3. \]
   Then it is easy to obtain
     $$H^{AW}_j(\mathcal{I}_{(k_1,k_2)})
     \cong \begin{cases}
   \Z ,  &  \text{if $j=0$}; \\
   0,  &  \text{if $j\geq 1$}.
 \end{cases} \ \ \ H^{DW}_j (\mathcal{I}_{(k_1,k_2)})
      \cong \begin{cases}
   \Z\oplus \Z\slash \mathrm{gcd}(k_1,k_2)\Z ,  &  \text{if $j=0$}; \\
   0,  &  \text{if $j\geq 1$}.
 \end{cases}  $$
 
 \begin{exam}[Line segment with finitely many singular points in the interior]\label{Exam:Segment}
   \ \n
    Let $I_{(k_1,k_2,\dots,k_n)}$ denote 
    the pseudo-orbifold $([a,b],\lambda)$ with $n$ singular points $v_1,\cdots, v_n$ where $ a<v_1<\cdots <v_n<b$ and
    the weight of $v_i$ is $k_i$, $1\leq i \leq n$. Next, we prove by induction that 
    \begin{equation} \label{Equ:AW-I-n}
        H^{AW}_i(I_{(k_1,\dots,k_n)})=
        \begin{cases}
            \Z \oplus \Z_{k_1} \oplus \dots \oplus \Z_{k_n}, & \text{if $i=0$;} \\ 
            0, & \text{otherwise}.
        \end{cases}
    \end{equation}
    where either $[a]$ or $[b]$ is a generator of the 
    $\Z$-factor.
    For $k=1$, $I_{(k_1)}$ is as shown in Figure~\ref{Fig:Line-Seg-1} where the natural triangulation is divisibly weighted.
        \begin{figure}[h]
   	\begin{equation*}
   		\vcenter{
   			\hbox{
   				\mbox{$\includegraphics[width=0.148\textwidth]{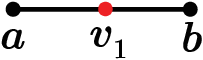}$}
   			}
   		}
   	\end{equation*}
   	\caption{An interval with only one singular point in the interior
   	} \label{Fig:Line-Seg-1}
   \end{figure} 
    
        It is easy to see that 
         $H^{AW}_0(I_{(k_1)})\cong \Z\oplus \Z_{k_1}$
        where $[a-b]$ is a generator of $\Z_{k_1}$, and $[a]$ (or $[b]$) is a generator of $\Z$. 
          Assume that we have prove~\eqref{Equ:AW-I-n} when the number of singular points is less than $n$. For $I_{(k_1,k_2,\dots,k_n)}$, we can subdivide $[a,b]$ into $n+1$ $1$-simplices to obtain a divisibly weighted triangulation as shown in Figure~\ref{Fig:Line-Seg-2}.
           \begin{figure}[h]
   	\begin{equation*}
   		\vcenter{
   			\hbox{
   				\mbox{$\includegraphics[width=0.54\textwidth]{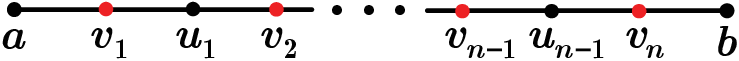}$}
   			}
   		}
   	\end{equation*}
   	\caption{An interval with $n$ singular point in the interior
   	} \label{Fig:Line-Seg-2}
   \end{figure} 
   Consider the interval $[a,u_{n-1}]$ as $I_{(k_1,\dots,k_{n-1})}$, and $[u_{n-1},b]$ as $I_{(k_n)}$. Then by the Mayer-Vietoris sequence for
   $I_{(k_1,\dots,k_n)} = I_{(k_1,\dots,k_{n-1})} \cup I_{(k_n)}$, we obtain
        \[
            H_0^{A W}(\{u_{n-1}\}) \longrightarrow H_0^{A W}(I_{(k_1, \ldots, k_{n-1})}) \oplus H_0^{A W}(I_{(k_n)}) \longrightarrow H_0^{A W}(I_{(k_1, \ldots, k_n)}) \longrightarrow 0.
        \]
    By our induction hypothesis, 
    $H_0^{A W}(I_{(k_1, \ldots, k_{n-1})})\cong \Z\oplus \Z_{k_1}\oplus\cdots\oplus \Z_{k_{n-1}}$ and $H_0^{A W}(I_{(k_n)})\cong \Z\oplus \Z_{k_n}$, where  
        the $\Z$-factor of $H_0^{A W}(I_{(k_1,\cdots,k_{n-1})})$ is generated by $[u_{n-1}]=[a]$ while the $\Z$-factor of $H_0^{A W}(I_{(k_n)})$ is generated by $[u_{n-1}]=[b]$.
        Then the induction step follows from the above long exact sequence. 
   \end{exam}
   
   \begin{rem}
    From the above proof, we can see that $[u_{i}-u_{i+1}]$ is a generator of the $\Z_{k_i}$-factor of $H^{AW}_0(I_{(k_1,\dots,k_n)})$. Then $[a-b]$ is the sum of the generator of the torsion part of $H^{AW}_0(I_{(k_1,\dots,k_n)})$.
   \end{rem}
   
 It is not so easy to compute $H^{DW}_*(I_{(k_1, \ldots, k_{n})})$ by a direct induction on $n$. We will compute $H^{DW}_*(I_{(k_1,\dots,k_n)})$ later (see~\eqref{Equ:DW-I-n}) using the result of the next example.

 \begin{exam}[Circle with finitely many singular points]\label{Exam:1-dim-pseudo-orbifold}
        In Figure~\ref{Fig:Example-Cirle-Sing}, we have a 
   $1$-dimensional pseudo-orbifold $(S^1,\lambda)$, denoted by $\mathcal{S}^1_{(k_1,\cdots,k_n)}$, which is a circle with $n$ singular points $v_1,\cdots, v_n$, $n\geq 1$, whose
  weights $\lambda(v_i)= k_i$, $k_i\geq 2$, $i=1,\cdots,n$. By adding a vertex $u_i$ between $v_i$ and $v_{i+1}$ for each $i$, we obtain a divisibly weighted triangulation $(L,\eta)$ of 
   $\mathcal{S}^1_{(k_1,\cdots,k_n)}$.
    \begin{figure}[h]
   	\begin{equation*}
   		\vcenter{
   			\hbox{
   				\mbox{$\includegraphics[width=0.66\textwidth]{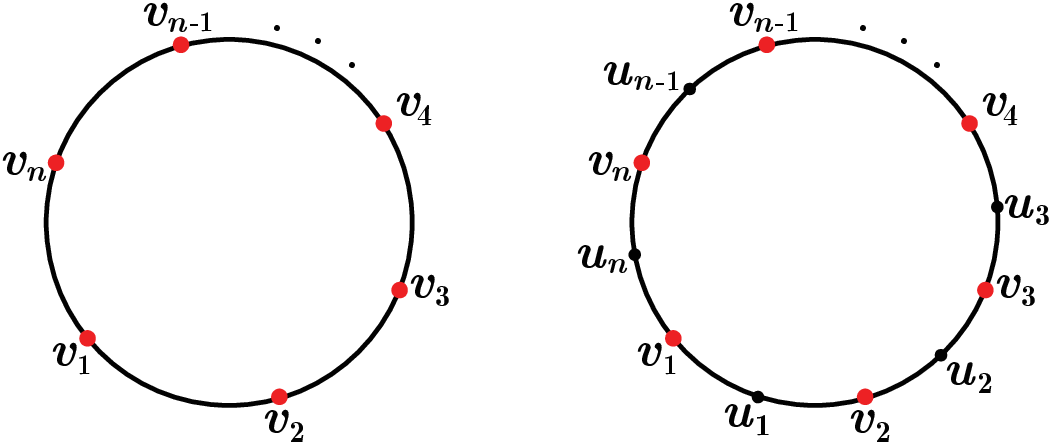}$}
   			}
   		}
   	\end{equation*}
   	\caption{A circle with $m$ singular points
   	} \label{Fig:Example-Cirle-Sing}
   \end{figure}  
   
   We can think of $\mathcal{S}^1_{(k_1,\dots,k_n)}$ as the union of $I_{(k_1,\dots,k_n)}$ and $I=[a,b]$ (an interval without singular points) where $I_{(k_1,\dots,k_n)} \cap I=\{a,b\}$. Then by the Mayer-Vietoris sequence of AW-homology groups, 
   \begin{align*}
 &  H_0^{A W}(\{a, b\}) \xlongrightarrow{i_*} H_0^{A W}\left(I_{\left(k_1, \ldots, k_n\right)}\right) \oplus H_0^{A W}(I) \longrightarrow H_0^{A W}\left(\mathcal{S}_{\left(k_1, \ldots, k_n\right)}^1\right) \longrightarrow 0 \\
 & \quad \ \ \Z\oplus \Z \ \ \longmapsto \ \ (\Z \oplus \Z_{k_1} \oplus \dots \oplus \Z_{k_n} )\oplus \Z  
   \end{align*}
   Choose the generators of $H^{AW}_0(\{a,b\} )$ to be $[a]$ and $[a]-[b]$. The image of $[a]$ in $H_0^{AW} (I_{(k_1,\cdots,k_n)} )$ and $H_0^{AW} (I )$ are both generators of the free abelian part, while the image of $[a]-[b]$ is exactly the sum of the generator of torsion part of $H_0^{AW} (I_{(k_1,\cdots,k_n)} )$. So we obtain
    \begin{equation} \label{Equ:AW-S-n}
            H^{AW}_i(\mathcal{S}^1_{(k_1,\cdots,k_n)})=
            \begin{cases}
                \Z \oplus \Z_{k_1}\langle c_1 \rangle \oplus \cdots \oplus \Z_{k_n}\langle c_n \rangle \slash \langle c_1+\ldots + c_n \rangle, & \text{if $i=0$}; \\ 
                \Z, & \text{if $i=1$};\\
                0, & \text{otherwise}.
            \end{cases}
    \end{equation}
    where $\mathbb{Z}_{k_i}\left\langle c_i\right\rangle$ denotes the cyclic group  $\mathbb{Z}_{k_i}$ whose generator is  $c_i$.
    Let 
    \begin{align}
      G_{\{k_1,\cdots, k_n\}} &= \text{the 
    torsion subgroup of}\ H^{AW}_0(\mathcal{S}^1_{(k_1,\cdots,k_n)}). \\
    &= \Z_{k_1}\langle c_1 \rangle \oplus \cdots \oplus \Z_{k_n}\langle c_n \rangle \slash \langle c_1+\ldots + c_n \rangle. \notag
    \end{align}
     The order of $G_{\{k_1,\cdots,k_n\}}$ is 
   \begin{equation*} 
    |G_{\{k_1,\cdots,k_n\}}| =\frac{k_1k_2\cdots k_n}{\mathrm{lcm}(k_1,k_2,\cdots,k_n)}.
\end{equation*}
     In particular, $G_{\{k_1,\cdots,k_n\}}$ is trivial if and only if $n=1$ or $k_1,k_2,\cdots, k_n$ are pairwise relatively prime.\n
 
   Another way to compute $H^{AW}_*(\mathcal{S}^1_{(k_1,\cdots,k_n)})$ is to simplify the chain complex $C_*(L,\partial^{\eta})$. From the triangulation $(L,\eta)$ and the isomorphisms in~\eqref{Equ:Q-Coeff-Iso}, we can derive that
     \[ H^{AW}_j(\mathcal{S}^1_{(k_1,\cdots,k_n)}) \cong
       \begin{cases}
   \text{cokernel of}\ \Z^n \overset{\alpha}{\longrightarrow} \Z^n ,  &  \text{if $j=0$}; \\
     \ \Z ,   &    \text{if $j=1$}; \\
     \ 0, & \text{if $j\geq 2$},
 \end{cases} 
    \]   
     where $\alpha$ is the linear map represented by the $n\times n$ matrix $ A_{(k_1,\cdots,k_n)}$ defined by
       \[ \quad \quad \
    A_{(k_1,\cdots,k_n)} = 
    \begin{pmatrix}
        -k_1 &  k_2 & \cdots & 0 & 0 \\  
         0  &  -k_2  & \cdots & 0 & 0  \\
       \vdots & \vdots & \ddots & \vdots & \vdots \\
           0 & 0 & \cdots & -k_{n-1} & k_n  \\
       k_1 & 0 & \cdots & 0 & -k_n 
   \end{pmatrix}_{n\times n}. \qquad\qquad
    \]
  The matrix $A_{(k_1,\cdots,k_n)}$ is simplified from the $2n\times 2n$ matrix
  representing the boundary map $C_1(L,\partial^{\eta})\rightarrow C_0(L,\partial^{\eta})$.
   Moreover, we can similarly compute
   \[ H^{DW}_j(\mathcal{S}^1_{(k_1,\cdots,k_n)}) \cong
       \begin{cases}
   \text{cokernel of}\ \Z^n \overset{\alpha^t}{\longrightarrow} \Z^n ,  &  \text{if $j=0$}; \\
     \ \Z ,   &    \text{if $j=1$}; \\
     \ 0, & \text{if $j\geq 2$}.
 \end{cases} 
    \]   
    where $\alpha^t$ is the linear map represented by the transpose $A^t$
    of $A$. Therefore, there is an isomorphism
 $$H^{DW}_j(\mathcal{S}^1_{(k_1,\cdots,k_n)}) \cong 
 H^{AW}_j(\mathcal{S}^1_{(k_1,\cdots,k_n)}), \, \forall j\geq 0.$$ 
 In particular, by~\eqref{Equ:AW-S-n} we have 
 \begin{equation} \label{Equ:S1-AW}
   H^{DW}_0(\mathcal{S}^1_{(k_1,\cdots,k_n)}) \cong H^{AW}_0(\mathcal{S}^1_{(k_1,\cdots,k_n)}) \cong \Z \oplus G_{\{k_1,\cdots,k_n\}}.
   \end{equation} 
  \end{exam}

 \begin{rem}\label{Rem:0-generator}
  Although $H^{AW}_0(\mathcal{S}^1_{(k_1,\cdots,k_n)}) $ is isomorphic to $H^{DW}_0(\mathcal{S}^1_{(k_1,\cdots,k_n)})$, their generators are different. In fact, $H^{AW}_0(\mathcal{S}^1_{(k_1,\cdots,k_n)}) $ is always generated by the regular points in $\mathcal{S}^1_{(k_1,\cdots,k_n)}$
 while $H^{DW}_0(\mathcal{S}^1_{(k_1,\cdots,k_n)}) $ is always generated by singular points. For example when $n=2$ (see Figure~\ref{Fig:Example-Cirle-2}), $H^{AW}_0(\mathcal{S}^1_{(k_1,k_2)})$ is generated by regular points
 $u_1$ and $u_1 - u_2$, while $H^{DW}_0(\mathcal{S}^1_{(k_1,k_2)})$ is generated by singular points 
 $v_1$ and $\frac{k_1}{\mathrm{gcd}(k_1,k_2)}v_1 - \frac{k_2}{\mathrm{gcd}(k_1,k_2)}v_2$.
 \end{rem}
  \begin{figure}[h]
        \begin{equation*}
        \vcenter{
            \hbox{
                  \mbox{$\includegraphics[width=0.46\textwidth]{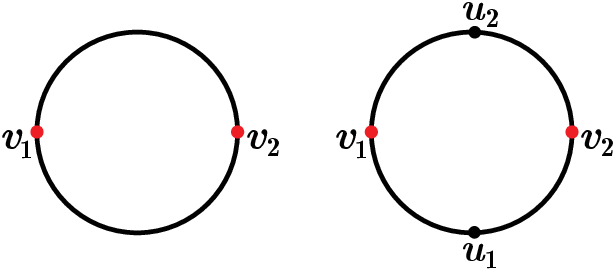}$}
                 }
           }
     \end{equation*}
   \caption{A circle with two singular points
       } \label{Fig:Example-Cirle-2}
   \end{figure}  
 
  Using the above calculation of 
  $H^{DW}_*(\mathcal{S}^1_{(k_1,\cdots,k_n)})$, we 
  can deduce that
           \begin{equation} \label{Equ:DW-I-n}    H^{DW}_i(I_{(k_1,\dots,k_n)})=
        \begin{cases}
            \Z \oplus G_{\{k_1,\cdots,k_n\}}, & \text{if $i=0$;} \\ 
            0, & \text{otherwise}.
        \end{cases}
    \end{equation}
  Indeed, the Mayer-Vietoris sequence of DW-homology groups for the decomposition $\mathcal{S}^1_{(k_1,\dots,k_n)}=I_{(k_1,\dots,k_n)} \cup I$ gives
 \begin{align*}
 &  H_0^{DW}(\{a, b\}) \xlongrightarrow{i_*} H_0^{DW}\left(I_{\left(k_1, \ldots, k_n\right)}\right) \oplus H_0^{DW}(I) \longrightarrow H_0^{DW}\left(\mathcal{S}_{\left(k_1, \ldots, k_n\right)}^1\right) \longrightarrow 0 \\
 & \quad \ \ \Z\oplus \Z \ \ \ \longmapsto \ \ \ (\Z \oplus G_{\{k_1,\cdots,k_n\}}) \oplus \Z  
   \end{align*}
 Choose the generators of $H^{DW}_0(\{a,b\} )$ to be $[a]$ and $[a]-[b]$. The image of $[a]$ in $H_0^{DW} (I_{(k_1,\cdots,k_n)} )$ and $H_0^{DW} (I )$ are both generators of the free abelian part, while the image of $[a]-[b]$ is $0$. So  $H^{DW}_0(I_{(k_1,\cdots,k_n)}) \cong \Z \oplus G_{\{k_1,\cdots,k_n\}}$ which gives~\eqref{Equ:DW-I-n}.
 \n
 
    \subsection{Two-dimensional orbifolds and
        pseudo-orbifolds}\ \n

   \begin{exam}\label{Exam:n-gon-Homology}
   In Figure~\ref{Fig:Example-n-gon}, we have a 
   $2$-dimensional pseudo-orbifold $(D^2,\lambda)$, denoted by $\mathcal{D}^2_{(k_1,\cdots,k_n)}$, which is a $2$-disk $D^2$ with $n$ singular points $v_1,\cdots, v_n$, $n\geq 1$, on the boundary of $D^2$ whose
  weights $\lambda(v_i)= k_i\geq 2$, $i=1,\cdots,n$. \n  
   \begin{figure}[h]
        \begin{equation*}
        \vcenter{
            \hbox{
                  \mbox{$\includegraphics[width=0.76\textwidth]{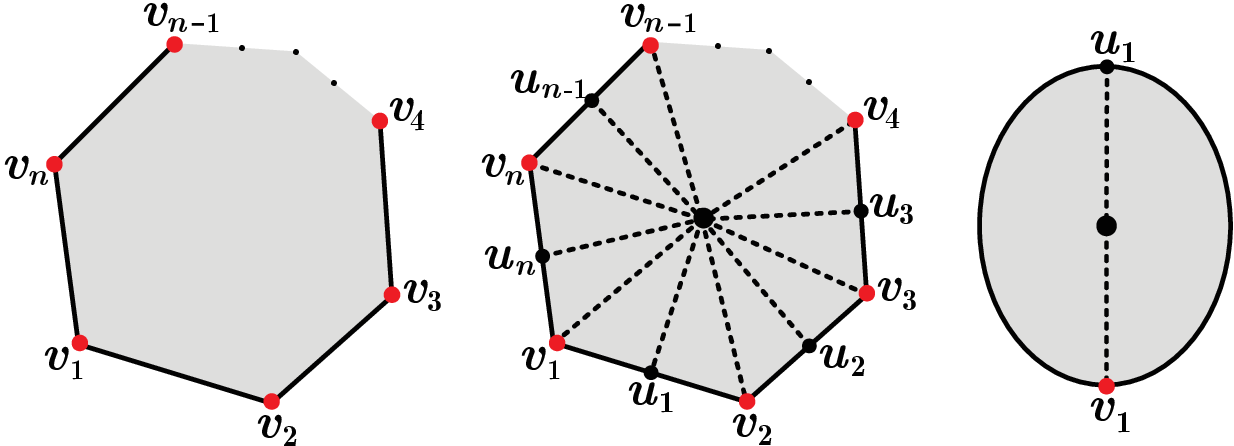}$}
                 }
           }
     \end{equation*}
   \caption{A $2$-disk with $n$ singular points on the boundary
       } \label{Fig:Example-n-gon}
   \end{figure}  
 Let $(K,\xi)$ denote the descending divisibly weighted triangulation of $\mathcal{D}^2_{(k_1,\cdots,k_n)}$ as shown in Figure~\ref{Fig:Example-n-gon}. The inversion $\widehat{\xi}$ is an ascending divisible weight on $K$. It is not hard to check that the weighted simplicial chain complex $\big(C_*(K),\partial^{\widehat{\xi}}\big)$ is isomorphic to the algebraic cone of
  $\big(C_*(K|_{\partial D^2}),\partial^{\widehat{\xi}}\big)$ (see Definition~\ref{Def:Alg-Cone}).
  So by Lemma~\ref{Lem:Cone-vanish}, $(C_*(K),\partial^{\widehat{\xi}})$ is acyclic and hence
    \[ H^{AW}_j(\mathcal{D}^2_{(k_1,\cdots,k_n)})  \cong
       \begin{cases}
   \Z ,  &  \text{if $j=0$}; \\
     0,   &    \text{if $j\geq 1$}.
 \end{cases} 
    \]  
 Meanwhile, $(C_*(K),\partial^{\xi})$ may not be acyclic. A direct calculation shows that 
 \[ H^{DW}_j(\mathcal{D}^2_{(k_1,\cdots,k_n)})  \cong
       \begin{cases}
     \Z\oplus G_{\{k_1,\cdots,k_n\}},  &  \text{if $j=0$}; \\
     0,   &    \text{if $j\geq 1$}.
 \end{cases} 
    \]  
Moreover, a simple analysis of the weighted simplicial chain complexes of  $\mathcal{S}^1_{(k_1,\cdots,k_n)}$ and 
$\mathcal{D}^2_{(k_1,\cdots,k_n)}$ shows that the inclusion map
  $\mathcal{S}^1_{(k_1,\cdots,k_n)} \hookrightarrow 
  \mathcal{D}^2_{(k_1,\cdots,k_n)}$ induces an isomorphism $ H^{DW}_0(\mathcal{S}^1_{(k_1,\cdots,k_n)}) 
  \xlongrightarrow{\cong} H^{DW}_j(\mathcal{D}^2_{(k_1,\cdots,k_n)})$.    
  \end{exam}

  \begin{exam} \label{Exam:Teardrop}
    The \emph{teardrop} orbifold $\mathcal{S}^2_{(k)}$ is the $2$-sphere with only one singular point $v_0$
    (see Figure~\ref{Fig:Example-Teardrop})
whose local group $G_{v_0}=\Z\slash k\Z$ ($k\geq 2$). A chart around $v_0$ consists of an open disk $\tilde{U}\subset \R^n$ and
an action of $\Z\slash k\Z$ on $\tilde{U}$ by rotations.
$\mathcal{S}^2_{(k)}$ is a typical example of ``bad'' orbifold.

 \begin{figure}[h]
        \begin{equation*}
        \vcenter{
            \hbox{
                  \mbox{$\includegraphics[width=0.35\textwidth]{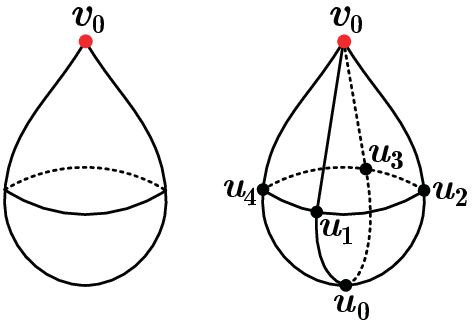}$}
                 }
           }
     \end{equation*}
   \caption{Teardrop orbifold
       } \label{Fig:Example-Teardrop}
   \end{figure}

 Using the triangulation adapted to $\mathcal{S}^2_{(k)}$ given in Figure~\ref{Fig:Example-Teardrop}, we can compute 
    \[  
   H^{AW}_j(\mathcal{S}^2_{(k)}) \cong 
    H^{DW}_j(\mathcal{S}^2_{(k)}) \cong
       \begin{cases}
   \Z,  &  \text{if $j=0$}; \\
     0 ,   &    \text{if $j=1$}; \\
   \Z,  &  \text{if $j = 2$};\\
   0, & \text{if $j \geq 3$}.
 \end{cases} 
    \]    
   \end{exam}

  \begin{exam} \label{Exam:Football}
    In Figure~\ref{Fig:Example-Football}, we have an orbifold $\mathcal{S}^2_{(k_1,k_2)}$ which is a $2$-sphere with two isolated singular points $v_1$, $v_2$ with local groups
    $G_{v_1}=\Z\slash k_1\Z$, $G_{v_2}=\Z\slash k_2\Z$.
    Usually, $\mathcal{S}^2_{(k_1,k_2)}$ is called
    a \emph{football} if $k_1=k_2$ and a \emph{spindle} 
    if $k_1\neq k_2$. \n    
   
     Using the triangulation adapted to
    $\mathcal{S}^2_{(k_1,k_2)}$ in the middle picture of Figure~\ref{Fig:Example-Football},
    we can compute its AW-homology groups:
    \[ H^{AW}_j(\mathcal{S}^2_{(k_1,k_2)}) \cong
       \begin{cases}
   \Z,  &  \text{if $j=0$}; \\
     \Z\slash \mathrm{gcd}(k_1,k_2)\Z ,   &    \text{if $j=1$}; \\
   \Z,  &  \text{if $j= 2$};\\
   0, & \text{if $j\geq 3$}.
 \end{cases} 
    \]
    
     If $\mathrm{gcd}(k_1,k_2)\geq 2$, the generator of 
    the factor $\Z\slash \mathrm{gcd}(k_1,k_2)\Z$ in
    $H^{AW}_1(\mathcal{S}^2_{(k_1,k_2)})$ can be represented by
    $[\overline{u_1u_2} + \overline{u_2u_3} + \overline{u_3u_4} +\overline{u_4u_1}]$.\n
    
    \begin{figure}[h]
        \begin{equation*}
        \vcenter{
            \hbox{
                  \mbox{$\includegraphics[width=0.62\textwidth]{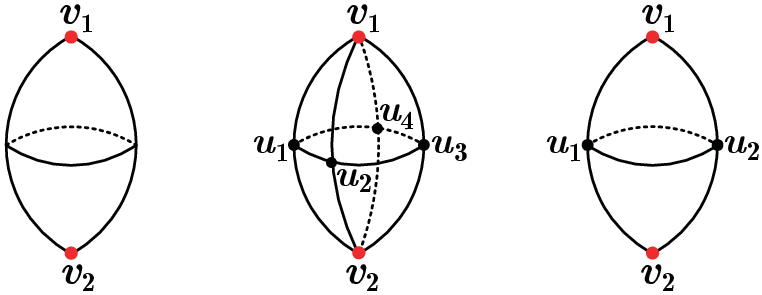}$}
                 }
           }
     \end{equation*}
   \caption{A $2$-sphere with two singular points
       } \label{Fig:Example-Football}
   \end{figure}  
        
        Notice that 
    the right picture of Figure~\ref{Fig:Example-Football} is a $\Delta$-complex triangulation of $\mathcal{S}^2_{(k_1,k_2)}$ which can also be used for our calculation. Moreover, we can compute
      \[ H^{DW}_j(\mathcal{S}^2_{(k_1,k_2)}) \cong
       \begin{cases}
  \Z \oplus \Z\slash \mathrm{gcd}(k_1,k_2)\Z,  &  \text{if $j=0$}; \\
   0   ,   &    \text{if $j=1$}; \\
   \Z,  &  \text{if $j= 2$};\\
   0, & \text{if $j\geq 3$}.
 \end{cases} 
    \]
  The generators of $H^{DW}_0(\mathcal{S}^2_{(k_1,k_2)})$ can be taken to be $v_1$ and $\frac{k_1}{\mathrm{gcd}(k_1,k_2)}v_1 - \frac{k_2}{\mathrm{gcd}(k_1,k_2)}v_2$.
     
      \end{exam}

  \begin{exam}\label{Exam:Sphere-n-points}
  Let $\mathcal{S}^2_{(k_1,\cdots,k_n)}$, $n\geq 1$, denote the pseudo-orbifold $(S^2,\lambda)$ with $n$ isolated singular points $v_1,\cdots, v_n$ whose
  weights are $\lambda(v_i)=k_i \geq 2$, $i=1,\cdots,n$. 
 We can think of $\mathcal{S}^2_{(k_1,\cdots,k_n)}$ as the gluing of two copies of $\mathcal{D}^2_{(k_1,\cdots,k_n)}$ along their boundaries, that is
 \[  \mathcal{S}^2_{(k_1,\cdots,k_n)} = \mathcal{D}^2_{(k_1,\cdots,k_n)} \cup_{\mathcal{S}^1_{(k_1,\cdots,k_n)}} \mathcal{D}^2_{(k_1,\cdots,k_n)}. \]
 Then by Mayer-Vietoris sequence
 and the result from~\eqref{Equ:S1-AW},
 we can easily compute
 \[ H^{AW}_j(\mathcal{S}^2_{(k_1,\cdots,k_n)})  \cong
       \begin{cases}
   \Z,  &  \text{if $j=0$}; \\
  G_{\{k_1,\cdots,k_n\}},   &    \text{if $j=1$}; \\
   \Z,  &  \text{if $j= 2$};\\
   0, & \text{if $j\geq 3$}. 
 \end{cases}  
    \]
    \[ H^{DW}_j(\mathcal{S}^2_{(k_1,\cdots,k_n)})  \cong
       \begin{cases}
   \Z\oplus G_{\{k_1,\cdots,k_n\}},  &  \text{if $j=0$}; \\
  0,   &    \text{if $j=1$}; \\
   \Z,  &  \text{if $j= 2$};\\
   0, & \text{if $j\geq 3$}.
 \end{cases} 
    \]
 Moreover, from the Mayer-Vietoris sequence we can see that\n
 
 $\bullet$ The connecting homomorphism 
 $$H^{AW}_1(\mathcal{S}^2_{(k_1,\cdots,k_n)}) \xlongrightarrow{d_1} H^{AW}_0(\mathcal{S}^1_{(k_1,\cdots,k_n)})\cong \Z\oplus G_{\{k_1,\cdots,k_n\}}$$ maps $H^{AW}_1(\mathcal{S}^2_{(k_1,\cdots,k_n)})$ isomorphically onto the torsion subgroup of
 $H^{AW}_0(\mathcal{S}^1_{(k_1,\cdots,k_n)})$.
\n
$\bullet$ The inclusion map $\mathcal{S}^1_{(k_1,\cdots,k_n)} \hookrightarrow \mathcal{S}^2_{(k_1,\cdots,k_n)}$ induces 
 an isomorphism 
 $$H^{DW}_0(\mathcal{S}^1_{(k_1,\cdots,k_n)}) \xlongrightarrow{\cong} H^{DW}_0(\mathcal{S}^2_{(k_1,\cdots,k_n)}).$$
  
\end{exam}
 
 \begin{rem}
   From Example~\ref{Exam:1-dim-orbifold},  
  \ref{Exam:Teardrop}, \ref{Exam:Football}
  and~\ref{Exam:Sphere-n-points}, we can see that a slight change of the weight of a singular point on a pseudo-orbifold may cause drastic changes to its AW-homology and DW-homology.\n
  \end{rem}

  More generally, we can compute the AW-homology and DW-homology of any pseudo-orbifold  with finitely many singular points whose underlying space is a closed surface.
      
    \begin{exam} \label{Exam:surface-sing}
   Let $\mathcal{X}_{(k_1,\cdots,k_n)}$, $n\geq 1$, denote the 
   pseudo-orbifold $(\Sigma,\lambda)$ where $\Sigma$ is a closed connected surface with $n$ singular points $v_1,\cdots, v_n$ whose weights are
 $\lambda(v_i) = k_i\geq 2$, $i=1,\cdots, n$. By thinking of $\mathcal{X}_{(k_1,\cdots,k_n)}$ as the connected sum of 
  $\mathcal{S}^2_{(k_1,\cdots,k_n)}$ with the surface $\Sigma$ (with no singular points) along a regular circle, we can easily compute the AW-homology and DW-homology of $\mathcal{X}_{(k_1,\cdots,k_n)}$ from the previous results on $\mathcal{S}^2_{(k_1,\cdots,k_n)}$ using 
  Mayer-Vietoris sequence:
  \[  H^{AW}_j(\mathcal{X}_{(k_1,\cdots,k_n)})\cong \begin{cases}
   \Z,  &  \text{if $j=0$}; \\
   H_1(\Sigma) \oplus G_{\{k_1,\cdots,k_n\}},  & \text{if $j=1$}; \\
     \Z ,  &  \text{if $j=2$}; \\
    0 ,  &  \text{if $j\geq 3$}. \\
 \end{cases} \]
 \[  H^{DW}_j(\mathcal{X}_{(k_1,\cdots,k_n)})\cong \begin{cases}
   \Z \oplus G_{\{k_1,\cdots,k_n\}} ,  & \text{if $j=0$}; \\
   H_1(\Sigma),  &  \text{if $j=1$}; \\
     \Z ,  &  \text{if $j=2$}; \\
    0 ,  &  \text{if $j\geq 3$}. \\
 \end{cases} \]
    where $H_1(\Sigma)$ is the ordinary singular homology group of $\Sigma$ in dimension $1$.   
    \end{exam}
    
     \begin{rem} \label{Rem:Agree}
     The AW-homology of $\mathcal{X}_{(k_1,\cdots,k_n)}$ is isomorphic to the $t$-singular homology  of $\mathcal{X}_{(k_1,\cdots,k_n)}$ (see~\cite[Theorem 11.1]{TakYok06}). Moreover, $H^{AW}_1(\mathcal{X}_{(k_1,\cdots,k_n)})$ is exactly the abelianization of the 
     orbifold fundamental group $\pi^{orb}_1(\mathcal{X}_{(k_1,\cdots,k_n)})$ of $\mathcal{X}_{(k_1,\cdots,k_n)}$ (see~\cite[p.\,424]{Scott83} for a presentation of $\pi^{orb}_1(\mathcal{X}_{(k_1,\cdots,k_n)}$)).
    \end{rem}
       
 \begin{exam}
 Let $\widetilde{\mathcal{D}}^2_{(l_1,\cdots,l_m)}$, $m\geq 1$,
  denote the pseudo-orbifold $(D^2,\widetilde{\lambda})$ with $m$ isolated singular points $z_1,\cdots, z_m$
  in the interior of $D^2$ whose weights are 
  $$\widetilde{\lambda}(z_j)=l_j\geq 2,\ j=1,\cdots,m.$$ 
  Since the boundary of $\widetilde{\mathcal{D}}^2_{(l_1,\cdots,l_m)}$ is a regular circle,
 we can glue a regular $2$-disk $D^2$ to $\widetilde{\mathcal{D}}^2_{(l_1,\cdots,l_m)}$
along its boundary, which gives $\mathcal{S}^2_{(l_1,\cdots,l_m)}$.
 Then by Mayer-Vietoris sequence, we can easily obtain
 \[ H^{AW}_j(\widetilde{\mathcal{D}}^2_{(l_1,\cdots,l_m)})   \cong
       \begin{cases}
   \Z,  &  \text{if $j=0$}; \\
  G_{\{l_1,\cdots,l_m\}},   &    \text{if $j=1$}; \\
  0,  &  \text{if $j \geq 2$}.
 \end{cases} 
 \]
 \[ H^{DW}_j(\widetilde{\mathcal{D}}^2_{(l_1,\cdots,l_m)})  \cong
       \begin{cases}
   \Z\oplus G_{\{l_1,\cdots,l_m\}},  &  \text{if $j=0$}; \\
   0,   &    \text{if $j\geq 1$}.
 \end{cases} 
    \]
  \end{exam}  
    
 By comparing the above computations for $\mathcal{D}^2_{(k_1,\cdots,k_n)}$ and $\widetilde{\mathcal{D}}^2_{(l_1,\cdots,l_m)}$, we see that moving the singular points
 from the boundary of a $2$-disk to its interior may change the AW-homology while preserving the DW-homology.\n
 
  It is also possible for us to compute the AW-homology and DW-homology of a pseudo-orbifold 
  based on $D^2$ which has singular points both in the interior and on the boundary of $D^2$.
  But it is difficult to write an explicit formula
  for the general case.

  \section{Product structure on DW-cohomology} \label{Sec:Product-Cohomology}
  
  \subsection{AW-cohomology and DW-cohomology}\ \n
  
  Let $(K,\mu)$ be a weighted simplicial complex. For an abelian group $G$, the \emph{weighted simplicial cochain complex} with $G$-coefficients is obtained by applying
  the $\mathrm{Hom}(-, G)$ functor to 
  the weighted simplicial chain complex $(C_*(K),\partial^{\mu})$, denoted by $(C^*(K;G),\delta^{\mu})$,
  where $\delta^{\mu}$ is the coboundary map determined by
    $\partial^{\mu}$.
  Then the \emph{weighted simplicial cohomology group of $(K,\mu)$} with $G$-coefficients is
   $$ H^*(K,\delta^{\mu};G):= H^*((C^*(K;G),\delta^{\mu})).$$    
   In practice, we mainly use $G=\Z$ as the coefficients 
   because of Theorem~\ref{thm:Weighted-Homology-Iso}.\n
   
   If $\varrho: (K,\mu) \rightarrow (K',\mu')$ is a
  a morphism of weighted simplicial complexes, define
  $$\varrho^{\#}: C^n(K';G)\rightarrow C^n(K;G), \ n\geq0, $$ 
 by: for any $\phi\in C^n(K';G)$ and any oriented
  $n$-simplex $[v_0,\cdots,v_n]\in C_n(K)$,
  $$\varrho^{\#}(\phi)([v_0,\cdots, v_n])= 
  \phi(\varrho_{\#}[v_0,\cdots, v_n]).$$
 It is easy to see that $\varrho^{\#}$ is a cochain map
 from $(C^*(K';G),\delta^{\mu'})$ to $(C^*(K;G),\delta^{\mu})$ and then induces a map on the weighted simplicial cohomology groups
 $$ \varrho^*: H^*(K',\delta^{\mu'};G)\rightarrow  
 H^*(K,\delta^{\mu};G). $$
  
  \begin{defi}[AW-cohomology and DW-cohomology]
   Suppose $(X,\lambda)$ is a weighted polyhedron with a divisibly weighted triangulation $(K,\xi)$. Then we define the \emph{AW-cohomology} and \emph{DW-cohomology} of $(X,\lambda)$ with $G$-coefficients to be $H^*(K,\delta^{\xi};G)$
   and $H^*(K,\delta^{\widehat{\xi}};G)$, respectively if
   $\xi$ is ascending and contrariwise if $\xi$ is descending. Denote the AW-cohomology of $(X,\lambda)$
   by $H^*_{AW}(X,\lambda;G)$ and the DW-cohomology 
    by $H^*_{DW}(X,\lambda;G)$. 
  \end{defi}
    
     By the universal coefficient theorem for cohomology  (see~\cite[Theorem 3.2]{Hatcher02}), we can compute the AW-cohomology and DW-cohomology groups of $(X,\lambda)$ from its AW-homology and DW-homology groups, respectively.
    \n
    
  If $f:(X,\lambda)\rightarrow (X',\lambda')$ is a $W$-continuous map of weighted polyhedra, then by the above definitions
$f$ induces group homomorphisms
$$f^*: H^*_{AW}(X',\lambda';G)\rightarrow H^*_{AW}(X,\lambda;G), \ \ f^*: H^*_{DW}(X',\lambda';G)\rightarrow H^*_{DW}(X,\lambda;G). $$
   
   \n

\subsection{Weighted cup product} \ \n

  We discover that there is a natural product structure on
  the DW-cohomology of a weighted polyhedron defined
 below, which generalizes the cup product $\cup$ of 
 ordinary simplicial cochains induced by Alexander-Whitney diagonal. Moreover, we will prove in Section~\ref{Subsec:Graded-Comm} that this product
  is graded commutative on the DW-cohomology.

 \begin{defi}[Weighted Cup Product] \label{Def:Product-DW}
 Let $(K,\xi)$ be a descending divisibly weighted simplicial complex and $R$ be a commutative ring with unit.
 Choose a total ordering $\prec$ of the vertices of $K$. Then for any cochains $\phi\in C^p(K;R)$ and
  $\psi\in C^q(K;R)$, let $\phi \Cup \psi\in
  C^{p+q}(K;R)$
  be the cochain whose value on an oriented
  $(p+q)$-simplex $\sigma =[v_0,\cdots,v_{p+q}]$ of $K$ with $v_0\prec\cdots\prec v_{p+q}$ is defined by
  \small
  \begin{equation}  \label{Equ:Prod-Wt-Homol}
  \phi\Cup \psi (\sigma) := \frac{\xi([v_0,\cdots,v_p]) \xi([v_p,\cdots,v_{p+q}])}{\xi([v_0,\cdots,v_{p+q}])} \, \phi([v_0,\cdots,v_p])
 \cdot \psi([v_p,\cdots,v_{p+q}]).
 \end{equation}
 \normalsize
 The coefficient on the right hand side of~\eqref{Equ:Prod-Wt-Homol} is always integral because $\xi$ is descending. Besides, the product of an integer with an element of $R$ is defined by the abelian group structure of $R$.
  We call $\Cup$ the \emph{weighted cup product} on $(C^*(K;R),\delta^{\xi})$ with respect to $\prec$. 
\end{defi}

 Note that the above definition depends on the ordering
 $\prec$ of the vertices of $K$ because $\mu$, $\phi$ and $\psi$ may take different values on different faces of $[v_0,\cdots,v_{p+1}]$. So we should have use
a notation like $\Cup^{\prec}$ instead of $\Cup$ to indicate the dependence on $\prec$. But later we will prove in Theorem~\ref{Thm:Iso-Cup-Cohom} that the ring structure induced by $\Cup$ on the weighted cohomology group $H^*(K,\delta^{\xi};R)$ is
actually independent on the choice of $\prec$ up to ring isomorphisms. So we will not attach $\prec$ in our notation and just remember that we need to
 fix a total ordering of the vertices of $K$ when
 writing the formula.
 A convenient choice of $\prec$ for $(K,\xi)$ is to order the vertices of $K$ according to their weight values. \n

 \begin{lem} \label{Lem:Cobound}
 Let $(K,\xi)$ be a descending divisibly weighted simplicial complex. For any $\phi\in C^p(K ; R) $ and $\psi\in C^q(K ; R)$, we have
   \begin{equation} \label{Equ:Cobound-formula}
   	 \delta^{\xi}(\phi\Cup \psi) =\delta^{\xi} \phi \Cup\psi + (-1)^p\phi \Cup \delta^{\xi}\psi.
   \end{equation}  
  \end{lem}
  \begin{proof}
   For any $(p+q+1)$-simplex $\sigma=[v_0,\cdots,v_{p+q+1}]$ of $K$ with $v_0\prec\cdots\prec v_{p+q}$,
\small
\begin{align*}
  &\, \delta^{\xi}(\phi\Cup \psi)(\sigma) = \phi\Cup \psi (\partial^{\xi} \sigma) \\
  = &\, \phi\Cup \psi \Big( \sum^{p+q+1}_{i=0} (-1)^i \frac{\xi([v_0,\cdots,\widehat{v}_i,\cdots,v_{p+q+1}])}{\xi([v_0,\cdots,v_{p+q+1}])}\, [v_0,\cdots,\widehat{v}_i,\cdots,v_{p+q+1}] \Big) \\
  = &   \sum^{p}_{i=0} (-1)^i         
       \frac{\xi([v_0,\cdots,\widehat{v}_i,\cdots,v_{p+1}])\xi([v_{p+1},\cdots,v_{p+q+1}])}{\xi([v_{0},\cdots,v_{p+q+1}])}\,  \phi([v_0,\cdots,\widehat{v}_i,\cdots,v_{p+1}]) 
  \cdot \psi([v_{p+1},\cdots,v_{p+q+1}])\\
  +& \sum^{p+q+1}_{i=p+1} (-1)^i \frac{\xi([v_0,\cdots,v_p])        
        \xi([v_p,\cdots,\widehat{v}_i,\cdots, v_{p+q+1}])}{\xi([v_{0},\cdots,v_{p+q+1}])}\,  \phi([v_0,\cdots,v_p]) 
  \cdot \psi([v_p,\cdots, \widehat{v}_i,\cdots,v_{p+q+1}]).
  \end{align*}
  \normalsize
  On the other side, we have
  \small  \begin{align*}
       &\quad  \ (\delta^{\xi}\phi\Cup \psi)(\sigma) \\
       &= \frac{\xi([v_0,\cdots,v_{p+1}])        
        \xi([v_{p+1},\cdots,v_{p+q+1}])}{\xi([v_{0},\cdots,v_{p+q+1}])}\, \delta^{\xi}\phi([v_0,\cdots,v_{p+1}]) 
  \cdot \psi([v_{p+1},\cdots,v_{p+q+1}])
     \end{align*}  
 \normalsize
   where 
 \small \begin{align*}
    \delta^{\xi}\phi([v_0,\cdots,v_{p+1}]) 
      & =\sum^{p+1}_{i=0} (-1)^i \frac{\xi([v_0,\cdots,\widehat{v}_i,\cdots,v_{p+1}])}{\xi([v_0,\cdots,v_{p+1}])}
      \, \phi([v_0,\cdots,\widehat{v}_i,\cdots,v_{p+1}]).
  \end{align*}
 \normalsize
  Then we have
 \small  \begin{align*}
       &\quad  \ (\delta^{\xi} \phi\Cup \psi)(\sigma) \\
       &= \sum^{p+1}_{i=0} (-1)^i         
       \frac{\xi([v_0,\cdots,\widehat{v}_i,\cdots,v_{p+1}])\xi([v_{p+1},\cdots,v_{p+q+1}])}{\xi([v_{0},\cdots,v_{p+q+1}])}\,  \phi([v_0,\cdots,\widehat{v}_i,\cdots,v_{p+1}]) 
  \cdot \psi([v_{p+1},\cdots,v_{p+q+1}]).
     \end{align*} 
      \normalsize
     Similarly, we have 
  \small
   \begin{align*}
       &\quad  \ (-1)^p (\phi\Cup \delta^{\xi}\psi)(\sigma) \\
       &= \sum^{p+q+1}_{i=p} (-1)^i \frac{\xi([v_0,\cdots,v_p])        
        \xi([v_p,\cdots,\widehat{v}_i,\cdots, v_{p+q+1}])}{\xi([v_{0},\cdots,v_{p+q+1}])}\,  \phi([v_0,\cdots,v_p]) 
  \cdot \psi([v_p,\cdots, \widehat{v}_i,\cdots,v_{p+q+1}]).
     \end{align*} 
      \normalsize
  
  If we add the above two expressions up, the last term of the first sum cancels the first term of the second sum, and the remaining terms give exactly $\delta^{\xi}(\phi\Cup \psi)(\sigma) $.
  The lemma is proved.
  \end{proof}
  
  By Lemma~\ref{Lem:Cobound}, the above product $\Cup$ on cochains induces the weighted cup product on 
  $H^*(K,\delta^{\xi};R)$:
   $$ H^p(K,\delta^{\xi};R) \times
   H^q(K,\delta^{\xi};R) \overset{\Cup}{\longrightarrow} H^{p+q}(K,\delta^{\xi};R), \ p,q\in \Z.  $$
 
 Then by Definition~\ref{Defi:Pseudo-Orbi-Homol}, this determines the weighted cup product $\Cup$ on the DW-cohomology $H^*_{DW}(X,\lambda;R)$ of a weighted polyhedron $(X,\lambda)$, that is
 $$ H^p_{DW}(X,\lambda;R) \times
  H^q_{DW}(X,\lambda;R) \overset{\Cup}{\longrightarrow} H^{p+q}_{DW}(X,\lambda;R), \ p,q\in \Z.  $$
\n

 Moreover, it is easy to check that the weighted cup product $\Cup$ is \emph{natural}
 in the sense that for 
 any morphism $\varrho: (K,\xi)\rightarrow (K',\xi')$ between two descending divisibly weighted simplicial complexes $(K,\xi)$ and $(K',\xi')$, the following diagram commutes
 \[ \xymatrix{
          H^p(K',\delta^{\xi'};R) \times  H^q(K',\delta^{\xi'};R) \ar[d]_{\varrho^*\times\varrho^*} \ar[r]^{\qquad\quad \Cup}
                &  H^{p+q}(K',\delta^{\xi'};R) \ar[d]^{\varrho^*}  \\
           H^p(K,\delta^{\xi};R)\times H^q(K,\delta^{\xi};R) \ar[r]^{\qquad\quad \Cup} & H^{p+q}(K,\delta^{\xi};R)
                 }.  \]
                 
Correspondingly, if $f:(X,\lambda)\rightarrow (X',\lambda')$ is a $W$-continuous map of weighted polyhedra,
the following diagram commutes:
     \[ \xymatrix{
          H^p_{DW}(X',\lambda';R) \times  H^q_{DW}(X',\lambda';R) \ar[d]_{f^*\times f^*} \ar[r]^{\qquad\qquad \Cup} &  H^{p+q}_{DW}(X',\lambda';R) \ar[d]^{f^*}  \\
     H^p_{DW}(X,\lambda;R)\times   H^q_{DW}(X,\lambda;R) \ar[r]^{\qquad\quad\ \ \Cup} &   H^{p+q}_{DW}(X,\lambda;R)
                 }.  \]

 \begin{rem}
 	 It seems to us that there is no meaningful product structure on AW-cohomology  with integral coefficients.
 	 Indeed, the naive extension of the formula in~\eqref{Equ:Prod-Wt-Homol} to an ascending weighted simplicial complex cannot guarantee the coefficients
 	 in the formula to be integral and make the coboundary formula in~\eqref{Equ:Cobound-formula} hold simultaneously.
 \end{rem}  
 
\subsection{Ordered simplex and ordered simplicial chain}
 \ \n
 To prove that the product $\Cup$ in Definition~\ref{Def:Product-DW} always defines isomorphic ring structures on $H^*(K,\delta^{\xi};R)$ with respect to  different choices of the total orderings of the vertices of $K$, 
 we need to use the following notions from~\cite[\S 13]{Munk84} and define some parallel notions for weighted simplicial complexes.
 
 \begin{defi}[Ordered Simplex and Ordered Simplicial Chain Complex]\label{Def:Ord-Simp}
   Let $K$ be a simplicial complex. An \emph{ordered
   $n$-simplex} of $K$ is an $n+1$ tuple $\langle v_0,\cdots, v_n\rangle$ of vertices of $K$, where $v_i$ are vertices of a simplex of $K$ but need not be distinct.
   Let $\widehat{C}_n(K)$ denote the free abelian group generated by the ordered $n$-simplices
of $K$; it is called the \emph{$n$-dimensional ordered simplicial chain group of $K$}. Define
the boundary map
$ \hat{\partial}: \widehat{C}_n(K) \rightarrow \widehat{C}_{n-1}(K) $ by
\begin{equation} \label{Equ:Boundary-Ord-Simplex}
  \hat{\partial} \big( \langle v_0,\cdots, v_n \rangle \big) = \sum^n_{i=0} (-1)^i \langle v_0,\cdots, \widehat{v}_i,\cdots, v_n\rangle.  
  \end{equation}
It is easy to check that $\hat{\partial}$ is well-defined and $\hat{\partial}\circ \hat{\partial}=0$ for all $n\geq 1$.
We call $(\widehat{C}_*(K),\hat{\partial})$ the 
\emph{ordered simplicial chain complex} of $K$ and
let 
$$\widehat{H}_*(K)= \text{the homology group of}\   
 \big(\widehat{C}_*(K),\hat{\partial}\big).$$
 We call $\widehat{H}_*(K)$ the \emph{ordered simplicial
 homology} of $K$. In addition, let
 \begin{equation*}
  \sigma_{v_0\cdots v_n} = \text{the simplex of} \ K
  \ \text{spanned by all the different vertices among}\
    v_0,\cdots, v_n. 
 \end{equation*}
 \end{defi}

 Note that the ordinary simplicial chain group $C_*(K)$ can be considered
 as the quotient group of the ordered simplicial chain group $\widehat{C}_*(K)$. Indeed, for all $n\geq 0$ there is a natural epimorphism
   \[ \Psi_n: \widehat{C}_n(K)\rightarrow C_n(K) \]
 defined by
 \begin{equation} \label{Equ:Psi-def}
   \Psi_n \big( \langle v_{0},\cdots, v_{n}\rangle \big)= \begin{cases}
  [v_{0},\cdots, v_{n}],  &  \text{if $v_{0},\cdots, v_{n}$ are all distinct}; \\
   0,  &  \text{otherwise}.
 \end{cases} 
 \end{equation}
       
 On the other hand, given a total ordering $\prec$ of all the
 vertices of $K$, we can define a monomorphism
 $\Theta_n: C_n(K)\rightarrow \widehat{C}_n(K)
 $ for all $n\geq 0$ by: for any simplex
 $\sigma=\{v_{0},\cdots, v_{n}\}$ of $K$ with $v_{0} \prec \cdots \prec v_{n}$,
 \begin{equation}\label{Equ:Theta-def}
  \Theta_n \big([v_{0},\cdots, v_{n}]\big) =  \langle v_{0},\cdots, v_{n}\rangle.
 \end{equation}
  It is easy to 
 check that $\Psi=\{\Psi_n\}_{n\geq 0}$ and $\Theta=\{\Theta_n\}_{n\geq 0}$ are both chain maps.
 Moreover, according to~\cite[Theorem 13.6]{Munk84}, $\Psi$ and $\Theta$ are chain homotopy inverses of each other. So there is a group isomorphism 
 \begin{equation} \label{Equ:Iso-Hom-order}
  H_j(K)\cong \widehat{H}_j(K), \ \forall j\geq 0. 
  \end{equation}
  
  Given any abelian group $G$, it is natural to define
  \emph{ordered 
  simplicial chain complex} and \emph{ordered 
  simplicial cochain complex} of $K$ by applying the
  tensor product functor $\otimes G$ and $\mathrm{Hom}(-, G)$ functor to $\widehat{C}_*(K)$, respectively, denoted by
  $$ \big(\widehat{C}_*(K;G),\hat{\partial} \big) = \big(\widehat{C}_*(K)\otimes G, \hat{\partial} \otimes \mathrm{id}_G \big), 
   \ \
     \big(\widehat{C}^*(K;G),\hat{\delta}\big) =
  \mathrm{Hom} \big((\widehat{C}_*(K),\hat{\partial}),
   G\big). $$
 Moreover, let
  $$ \widehat{H}_*(K;G) := \text{homology group of}\ \big(\widehat{C}_*(K;G),\hat{\partial} \big),  $$
  $$ \widehat{H}^*(K;G) := \text{cohomology group of}\ \big(\widehat{C}^*(K;G),\hat{\delta} \big).  $$ 
  We call $\widehat{H}_*(K;G)$ and $\widehat{H}^*(K;G)$  the \emph{ordered simplicial homology} and \emph{ordered simplicial cohomology of $K$ with $G$-coefficients}, respectively.  By the isomorphism
    in~\eqref{Equ:Iso-Hom-order} and the universal coefficient theorems, we obtain isomorphisms
   \begin{equation} \label{Equ:Iso-CoHom-order}
  H_j(K;G)\cong \widehat{H}_j(K;G), \ \   H^j(K;G)\cong \widehat{H}^j(K;G), \ \forall j\geq 0. 
   \end{equation}
   \n
  Next, we extend the above definitions to weighted simplicial complexes. Let $(K,\mu)$ be a weighted simplicial complex. Define the weight 
  of any ordered simplex $\langle v_0,\cdots,v_n \rangle$  of $K$ by
  $$\mu\big( \langle v_0,\cdots,v_n \rangle \big)= \mu(\sigma_{v_0 \cdots v_n}).$$
   
 In addition, we can modify the boundary map $\hat{\partial}$ on $\widehat{C}_*(K)$ into a boundary map
  $ \hat{\partial}^{\mu}: \widehat{C}_n(K)\rightarrow \widehat{C}_{n-1}(K) $ using the similar formula as $\partial^{\mu}$
  given by~\eqref{Equ:weighted-Boun-AW} and~\eqref{Equ:weighted-Boun-DW} (depending on $\mu$
  is ascending or descending). So we obtain a chain complex $\big(\widehat{C}_*(K), \hat{\partial}^{\mu} \big)$.
 Note that the augmentation $\varepsilon^{\mu}$ of
  $(C_*(K), \partial^{\mu})$ defined in Section~\ref{Subsec:Augmentation} gives the augmentation of $\big(\widehat{C}_*(K), \hat{\partial}^{\mu}\big)$.  
   Moreover, $\hat{\partial}^{\mu}$ induces a coboundary map $\hat{\delta}^{\mu}$ on the cochain complex $\widehat{C}^*(K;G)$.  Let
\[ \widehat{H}_*(K, \hat{\partial}^{\mu};G) := \ \text{the homology
group of}\ \big(\widehat{C}_*(K;G),\hat{\partial}^{\mu} \big); \] 
\[ 
\widehat{H}^*(K,\hat{\delta}^{\mu};G) :=\ \text{the cohomology group of}\ \big(\widehat{C}^*(K;G), \hat{\delta}^{\mu} \big). \]
 As usual, we will omit the coefficient $G$ if $G=\Z$.\n

The functorial properties of (weighted) simplicial homology naturally extend to the ordered (weighted) simplicial homology. In particular, we have the following theorem
which is parallel to Theorem~\ref{Thm:Contig}.

\begin{thm}  \label{Thm:Contig-Hom}
   If two morphisms $\varrho_0,\varrho_1: (K,\mu)\rightarrow (K',\mu')$ of weighted simplicial complexes 
   are contiguous, then the induced chain maps
   $$(\varrho_0)_{\#}, (\varrho_1)_{\#}: 
   \big(\widehat{C}_*(K),\hat{\partial}^{\mu} \big) \rightarrow \big(\widehat{C}_*(K'),\hat{\partial}^{\mu'} \big)$$ are chain homotopic, and hence
    $ (\varrho_0)_*= (\varrho_1)_*:  \widehat{H}_*(K,\hat{\partial}^{\mu})\rightarrow \widehat{H}_*(K',\hat{\partial}^{\mu'})$.
\end{thm}
\begin{proof}
  The proof is almost identical to
  to proof of Theorem~\ref{Thm:Contig} in~\cite{Daw90} 
if we replace ordinary simplices by ordered simplices there.
\end{proof}

 \begin{lem} \label{Lem:Wt-Div-Simplex-Ord}
  Let $\sigma$ be a simplex and $\xi$ be a divisible weight on $\sigma$. Then 
   $$
   \widehat{H}_j(\sigma,\hat{\partial}^{\xi}) \cong \begin{cases}
   \Z ,  &  \text{if $j=0$}; \\
   0 ,  &  \text{if $j \geq 1$}.
 \end{cases}
 $$
 \end{lem}
 \begin{proof}
 The proof is parallel to that of Lemma~\ref{Lem:Wt-Div-Simplex}.
   The identity map $\mathrm{id}_{\sigma}:\sigma\rightarrow \sigma$ is contiguous to a constant map, and so the lemma follows from Theorem~\ref{Thm:Contig-Hom}. 
 \end{proof}

 Moreover, we have the following theorem which 
  generalizes~\cite[Theorem 13.6]{Munk84}.
  
  \begin{thm}
  Let $(K,\xi)$ be a divisibly weighted simplicial complex.
 Then the two maps  
  $\Psi:  \big(\widehat{C}_*(K), \hat{\partial}^{\xi} \big)
  \rightarrow \big(C_*(K),\partial^{\xi} \big)$ and
  $\Theta: \big(C_*(K),\partial^{\xi} \big) \rightarrow
  \big(\widehat{C}_*(K), \hat{\partial}^{\xi} \big)$ are augmentation-preserving chain maps that are chain homotopy
inverses to each other. 
  \end{thm}
  \begin{proof}
   It is routine to check from the definitions that $\Psi$ and $\Theta$ are both chain maps with respect to the weighted boundary $\partial^{\xi}$ and $\hat{\partial}^{\xi}$. In addition,  they are both augmentation-preserving since they are the identity map on $0$-chains. Moreover, it is obvious that
    $$\Psi\circ \Theta  = \mathrm{id}_{C_*(K)}: (C_*(K),\partial^{\xi})\rightarrow (C_*(K),\partial^{\xi}).$$
  So it remains to prove that $\Theta \circ \Psi$ is chain homotopic to $\mathrm{id}_{\widehat{C}_*(K)}$.\n
  
  Let $\prec$ be a total ordering of all the vertices of $K$. Then for any ordered simplex $\langle v_{0},\cdots,v_{n} \rangle$ of $K$, 
  \begin{align} \label{Equ:Comp-Psi-Theta}
  & \ \Theta_n\circ \Psi_n  \big(\langle v_{0},\cdots,v_{n} \rangle \big) \\
   =& \begin{cases}
  (-1)^{t(v_{0} \cdots v_{n})}  \langle v_{i_0},\cdots, v_{i_n}\rangle,  &  \text{if $v_{0},\cdots, v_{n}$ are all distinct}; \\
   0,  &  \text{otherwise},
 \end{cases} \notag
  \end{align}
  where the sequence $v_{i_0} \cdots v_{i_n}$ is a permutation of $v_{0} \cdots v_{n}$ with $v_{i_0} \prec\cdots\prec v_{i_n}$, and
  $t(v_{0},\cdots,v_{n})$ is the number of 
  swaps in turning $v_{0} \cdots v_{n}$
  to $v_{i_0}\cdots v_{i_n}$. In addition, since $(K,\xi)$ is  divisibly weighted, $(\sigma_{v_{0} \cdots v_{n}},\xi)$ is a divisibly weighted simplex. 
   So by Lemma~\ref{Lem:Wt-Div-Simplex-Ord}, $\big(\widehat{C}_*(\sigma_{v_{0} \cdots v_{n}}), \hat{\partial}^{\xi}\big)$ is acyclic. Then we obtain an algebraic acyclic carrier $\Phi$ on $\big(\widehat{C}_*(K),\hat{\partial}^{\xi}\big)$ defined by
   \begin{equation} \label{Equ:Acyclic-Car-Order}
     \Phi\big(\langle v_{0},\cdots,v_{n} \rangle\big):= \big(\widehat{C}_*(\sigma_{v_{0} \cdots v_{n}}),\hat{\partial}^{\xi}\big). \end{equation}
  Obviously, $\Phi$ carries the chain map $\Theta \circ \Psi$. Then since $\Phi$ also carries $\mathrm{id}_{\widehat{C}_*(K)}$, it follows from Theorem~\ref{Thm:Acyclic-Carrier} that $\Theta \circ \Psi$ is chain homotopic to $\mathrm{id}_{\widehat{C}_*(K)}$.
  \end{proof}
  
   \begin{cor} \label{Cor:Iso-Hom-Order}
   Let $(K,\xi)$ be a divisibly weighted simplicial complex. For any abelian group $G$, the chain maps $\Psi$ and $\Theta$ induce group isomorphisms:
 $$\Psi_*: \widehat{H}_*(K,\hat{\partial}^{\xi};G) \rightarrow H_*(K,\partial^{\xi};G), \ \ \ \Theta_*: H_*(K,\partial^{\xi};G)\rightarrow \widehat{H}_*(K,\hat{\partial}^{\xi};G);  $$
 $$ \Psi^*: H^*(K,\delta^{\xi};G) \rightarrow 
 \widehat{H}^*(K,\hat{\delta}^{\xi};G), \ \ \  \Theta^*: \widehat{H}^*(K,\hat{\delta}^{\xi};G)\rightarrow
 H^*(K,\delta^{\xi};G).$$
  \end{cor}
  
  The definition of weighted cup product $\Cup$ in~\eqref{Equ:Prod-Wt-Homol} can be naturally extended to
 $\big( \widehat{C}^*(K;R), \hat{\delta}^{\xi}\big)$ where
 $(K,\xi)$ is a descending divisibly weighted simplicial complex and $R$ is a commutative ring with unit. Indeed, for any cochains $ \hat{\phi}\in \widehat{C}^p(K;R)$ and
  $\hat{\psi}\in \widehat{C}^q(K;R)$, let $\hat{\phi} 
 \, \hat{\Cup}\, \hat{\psi} \in \widehat{C}^{p+q}(K;R)$
  be the cochain whose value on an ordered
  $(p+q)$-simplex $\langle v_0,\cdots,v_{p+q}\rangle$ of $K$ is defined by 
  \begin{align}  \label{Equ:Cup-Order-def}
 & \ \ \ \ \hat{\phi}\, \hat{\Cup} \,\hat{\psi} \big(\langle v_0,\cdots,v_{p+q}\rangle\big) \\
  &:= \frac{\xi \big(\langle v_0,\cdots,v_p\rangle\big) \xi \big(\langle v_p,\cdots,v_{p+q}\rangle\big)}{\xi \big(\langle v_0,\cdots,v_{p+q}\rangle\big)} \, \hat{\phi}\big(\langle v_0,\cdots,v_p\rangle\big)
 \cdot \hat{\psi}\big(\langle v_p,\cdots,v_{p+q} \rangle\big). \notag
 \end{align}
 
 Note that we do not need to order the vertices of $K$ to define $\hat{\Cup}$. So this is a more intrinsic way to define the weighted cup product as suggested by the following theorem.
  
  \begin{thm} \label{Thm:Iso-Cup-Cohom}
   Let $(K,\xi)$ be a descending divisibly weighted simplicial complex. Then given any total ordering $\prec$ of the vertices of $K$, the chain map $\Theta$
   induces a ring isomorphism $\Theta^*:
   \big(\widehat{H}^*(K,\hat{\delta}^{\xi};R),\hat{\Cup} \big)\rightarrow \big(H^*(K,\delta^{\xi};R),\Cup\big)$.  
  \end{thm}
  \begin{proof}
    By Corollary~\ref{Cor:Iso-Hom-Order}, $\Theta$ already induces an additive isomorphism. So
    we only need to verify that $\Theta$ is a ring homomorphism. Denote by
    $$\Theta^{\#}: \big(\widehat{C}^*(K;R),\hat{\delta}^{\xi} \big) \rightarrow \big(C^*(K;R),\delta^{\xi}\big)$$
    the cochain map determined by $\Theta$. \n
        
    For any $ \hat{\phi}\in \widehat{C}^p(K;R)$,
  $\hat{\psi}\in \widehat{C}^q(K;R)$ and any oriented 
  $(p+q)$-simplex $[v_0,\cdots,v_{p+q}]$ of $K$ with
  $v_0\prec \cdots \prec v_{p+q}$, we have
  \begin{align*}
  &\ \ \ \ \Theta^{\#}(\hat{\phi}\, \hat{\Cup}\,\hat{\psi})\big( [v_0,\cdots,v_{p+q}] \big) = 
      \hat{\phi}\, \hat{\Cup}\,\hat{\psi} \big( 
      \Theta \big( [v_0,\cdots,v_{p+q}] \big) \big) 
    \overset{\eqref{Equ:Theta-def}}{=} \hat{\phi}\, \hat{\Cup}\,\hat{\psi}  \big( \langle v_0,\cdots,v_{p+q} \rangle \big) \\
    &\overset{\eqref{Equ:Cup-Order-def}}{=} \frac{\mu\big(\langle v_0,\cdots,v_p\rangle\big) \mu\big(\langle v_p,\cdots,v_{p+q}\rangle\big)}{\mu\big(\langle v_0,\cdots,v_{p+q}\rangle\big)} \, \hat{\phi}\big(\langle v_0,\cdots,v_p\rangle\big)
 \cdot \hat{\psi}\big(\langle v_p,\cdots,v_{p+q} \rangle\big) \\
 &= \frac{\mu\big(\langle v_0,\cdots,v_p\rangle\big) \mu\big(\langle v_p,\cdots,v_{p+q}\rangle\big)}{\mu\big(\langle v_0,\cdots,v_{p+q}\rangle\big)} \, \hat{\phi}\big(\Theta\big([v_0,\cdots,v_p]\big)\big)
 \cdot \hat{\psi}\big(\Theta\big([v_p,\cdots,v_{p+q}]\big)\big) \\
 &= \frac{\mu\big(\langle v_0,\cdots,v_p\rangle\big) \mu\big(\langle v_p,\cdots,v_{p+q}\rangle\big)}{\mu\big(\langle v_0,\cdots,v_{p+q}\rangle\big)} \, \Theta^{\#}(\hat{\phi})\big([v_0,\cdots,v_p]\big)
 \cdot \Theta^{\#}(\hat{\psi})\big([v_p,\cdots,v_{p+q}]\big) \\
 &\overset{\eqref{Equ:Prod-Wt-Homol}}{=} \Theta^{\#}(\hat{\phi})\Cup \Theta^{\#}(\hat{\psi}) \big( [v_0,\cdots,v_{p+q}] \big).
  \end{align*}
  So $\Theta^*$ is a ring homomorphism and the theorem is proved.  
  \end{proof}
  
 By the above theorem,  the ring structure induced by $\Cup$ on $H^*(K,\delta^{\xi};R)$ is independent on the choice of the ordering $\prec$ up to ring isomorphisms.

 \begin{rem}
  The cochain map $\Psi^{\#}$ induced by $\Psi$ is not a ring homomorphism from
   $\big(H^*(K,\delta^{\xi};R),\Cup\big)$ to
  $\big(\widehat{H}^*(K,\hat{\delta}^{\xi};R),\hat{\Cup} \big) $. Therefore, the map $\Psi^*$ is only an additive isomorphism.
 \end{rem}
\n
  
\subsection{Graded commutativity of the weighted cup product} \label{Subsec:Graded-Comm}
\ \n
  Now we are ready to prove that the weighted cup product
   $\Cup$
 on DW-cohomology is graded commutative. The proof is parallel to the proof of graded commutativity of
 singular cohomology in~\cite[Theorem 3.11]{Hatcher02}
 where here ordered simplices play the role of
 singular simplices.
 \n
 
 First of all, for a divisibly weighted simplicial complex $(K,\xi)$, we introduce an auxiliary map $\rho$ on $\widehat{C}_*(K)$ as follows: for any ordered simplex
 $\langle v_0,\cdots, v_n\rangle$ of $K$, 
 \begin{equation} \label{Equ:rho-Def}
  \rho \big(\langle v_0,\cdots, v_n\rangle \big) := \varepsilon_n \langle v_n,\cdots, v_0\rangle \ \text{where}\ 
 \varepsilon_n=(-1)^{\frac{n(n+1)}{2}}.
 \end{equation}
 
\begin{rem}
 Here we have to use ordered simplices instead of oriented simplices to define $\rho$. This is because
 $\rho$ would become the identity map if we
 replace $\langle v_0,\cdots, v_n\rangle$ in~\eqref{Equ:rho-Def} by an oriented simplex $[v_0,\cdots,v_n]$. This is another reason why we need to introduce ordered simplices in our study.
 \end{rem}
 \n
 
 \begin{lem} \label{Lem:Inver-Order}
  Suppose $(K,\xi)$ is a divisibly weighted simplicial complex. Then the map
  $\rho: \big(\widehat{C}_*(K), \hat{\partial}^{\xi}\big)\rightarrow 
  \big(\widehat{C}_*(K), \hat{\partial}^{\xi} \big)$ is a chain map 
  and $\rho$ is chain homotopic to the identity map
  $\mathrm{id}_{\widehat{C}_*(K)}$.  
 \end{lem}
 \begin{proof}
  Assume that $\xi$ is descending since the ascending case
  is completely parallel.
  For any ordered simplex
 $\langle v_0,\cdots, v_n\rangle$ of $K$, we have
 \begin{align*}
   \hat{\partial}^{\xi}\circ\rho\big( \langle v_0,\cdots, v_n\rangle \big) = &\, \varepsilon_n\,  \hat{\partial}^{\xi}\big( \langle v_n,\cdots, v_0\rangle \big)\\
   \overset{\eqref{Equ:Boundary-Ord-Simplex}}{=} &
 \varepsilon_n \sum^n_{i=0} (-1)^{i}\frac{\xi\big(\langle v_n,\cdots,\widehat{v}_{n-i}, \cdots, v_0 \rangle\big)}{\xi\big(\langle v_n,\cdots, v_0 \rangle\big)} \langle v_n,\cdots,\widehat{v}_{n-i}, \cdots, v_0 \rangle;
  \end{align*}
  \begin{align*}
  \rho\circ \hat{\partial}^{\xi} \big( \langle v_0,\cdots, v_n\rangle \big) = & \rho \Big(  
     \sum^n_{i=0} (-1)^i \frac{\xi\big(\langle v_0,\cdots,\widehat{v}_{i}, \cdots, v_n \rangle\big)}{\xi\big(\langle v_0,\cdots, v_n \rangle\big)} \langle v_0,\cdots,\widehat{v}_{i}, \cdots, v_n \rangle \Big)\\
     = & \varepsilon_{n-1} \Big(  
     \sum^n_{i=0} (-1)^{n-i} \frac{\xi\big(\langle v_0,\cdots,\widehat{v}_{n-i}, \cdots, v_n \rangle\big)}{\xi\big(\langle v_0,\cdots, v_n \rangle\big)} \langle v_0,\cdots,\widehat{v}_{n-i}, \cdots, v_n \rangle \Big).
   \end{align*}
 Since $\varepsilon_n = (-1)^n \varepsilon_{n-1}$, we obtain $\hat{\partial}^{\xi}\circ\rho = \rho\circ \hat{\partial}^{\xi}$, i.e. $\rho$ is a chain map.\n
 
 Moreover, $\rho$ is clearly augmentation-preserving and the acyclic carrier $\Phi$ defined in~\eqref{Equ:Acyclic-Car-Order} carries both $\rho$
 and $\mathrm{id}_{\widehat{C}_*(K)}$. So by
 Theorem~\ref{Thm:Acyclic-Carrier},
  $\rho$ is chain homotopic to $\mathrm{id}_{\widehat{C}_*(K)}$.   
 \end{proof}
 
 For a commutative ring $R$ with unit, by Lemma~\ref{Lem:Inver-Order} and the cohomology universal coefficient theorem,
 $\rho$ induces a cochain map $$\rho^{\#}: \big(\widehat{C}^*(K;R), \hat{\delta}^{\xi} \big) \rightarrow \big(\widehat{C}^*(K;R), \hat{\delta}^{\xi}\big)$$
 which is cochain homotopic to $\mathrm{id}_{\widehat{C}^*(K;R)}$. So the map $\rho^*$ induced by $\rho$ on the cohomology group is the identity map, that is
 \begin{equation} \label{Equ:rho-id}
   \rho^*=\mathrm{id}: \widehat{H}^p(K,\hat{\delta}^{\xi};R) \rightarrow \widehat{H}^p(K,\hat{\delta}^{\xi};R). 
\end{equation}

 \begin{lem} \label{Lem:rho-sign-commu}
Let $(K,\xi)$ be a descending divisibly weighted simplicial complex.
   For any cochains $ \hat{\phi}\in \widehat{C}^p(K;R)$,
  $\hat{\psi}\in \widehat{C}^q(K;R)$,
  \[ \rho^{\#}( \hat{\phi}\,\hat{\Cup}\,\hat{\psi} )
  =(-1)^{pq}  \rho^{\#}(\hat{\psi}) \,\hat{\Cup}\,  
   \rho^{\#}(\hat{\phi}).  \]
 \end{lem}
 \begin{proof}
   For any ordered simplex $\langle v_0,\cdots, v_{p+q}\rangle$ of $K$, we have
   \begin{align*}
    &\ \rho^{\#}( \hat{\phi}\,\hat{\Cup}\,\hat{\psi} )
     \big( \langle v_0,\cdots, v_{p+q}\rangle \big) 
    = \varepsilon_{p+q}\cdot \hat{\phi}\,\hat{\Cup}\,\hat{\psi} \big( 
      \langle v_{p+q},\cdots, v_{0}\rangle  \big) \\
     =&\, \varepsilon_{p+q}\cdot \frac{\xi \big(\langle v_{p+q},\cdots,v_q\rangle\big) \xi \big(\langle v_q,\cdots,v_{0}\rangle\big)}{\xi \big(\langle v_{p+q},\cdots,v_{0}\rangle\big)} \, \hat{\phi}\big(\langle v_{p+q},\cdots,v_q\rangle\big)
 \cdot \hat{\psi}\big(\langle v_q,\cdots,v_{0} \rangle\big).
   \end{align*}
   On the other hand,
 \begin{align*}
  &\ \rho^{\#}(\hat{\psi}) \,\hat{\Cup}\,  
   \rho^{\#}(\hat{\phi})  \big( \langle v_0,\cdots, v_{p+q}\rangle \big) \\
    =&\,  \frac{\xi \big(\langle v_{0},\cdots,v_q\rangle\big) \xi \big(\langle v_q,\cdots,v_{p+q}\rangle\big)}{\xi \big(\langle v_{0},\cdots,v_{p+q}\rangle\big)} \, \rho^{\#}(\hat{\psi})\big(\langle v_{0},\cdots,v_q\rangle\big)
 \cdot  \rho^{\#}(\hat{\phi}) \big(\langle v_q,\cdots,v_{p+q} \rangle\big)\\
 =&\, \varepsilon_{p} \varepsilon_{q}\cdot 
 \frac{\xi \big(\langle v_{0},\cdots,v_q\rangle\big) \xi \big(\langle v_q,\cdots,v_{p+q}\rangle\big)}{\xi \big(\langle v_{0},\cdots,v_{p+q}\rangle\big)} \,
 \hat{\psi}\big(\langle v_q,\cdots,v_{0} \rangle\big)
\cdot \hat{\phi}\big(\langle v_{p+q},\cdots,v_q\rangle\big).
 \end{align*}
 Then the lemma follows from the simple identity $\varepsilon_{p+q} = (-1)^{pq} \varepsilon_{p} \varepsilon_{q}$. 
 \end{proof}

 \begin{thm} \label{Thm:Graded-Comm-Cup}
 Let $(K,\xi)$ be a descending divisibly weighted simplicial complex.
  For any cohomology classes $[\hat{\phi}] \in \widehat{H}^p(K,\hat{\delta}^{\xi};R)$
  and $[\hat{\psi}]\in \widehat{H}^q(K,\hat{\delta}^{\xi};R)$,
  $$ [\hat{\phi}]\, \hat{\Cup}\, [\hat{\psi}] =(-1)^{pq} 
  [\hat{\psi}]\, \hat{\Cup}\, [\hat{\phi}]. $$
 \end{thm}
 \begin{proof}
   By the property of $\rho$ in ~\eqref{Equ:rho-id} and Lemma~\ref{Lem:rho-sign-commu}, we obtain
   \begin{align*}
     [\hat{\phi}]\, \hat{\Cup}\, [\hat{\psi}] =\rho^*([\hat{\phi}]\, \hat{\Cup}\, [\hat{\psi}])=
     \big[\rho^{\#} ( \hat{\phi} \, \hat{\Cup}\, \hat{\psi} ) \big] &= (-1)^{pq} \big[ \rho^{\#}(\hat{\psi}) \,\hat{\Cup}\, \rho^{\#}(\hat{\phi})  \big]\\
     &= (-1)^{pq} \big[ \rho^{\#}(\hat{\psi}) \big] \,\hat{\Cup}\, \big[\rho^{\#}(\hat{\phi})  \big] \\
     &=(-1)^{pq} 
  [\hat{\psi}]\, \hat{\Cup}\, [\hat{\phi}].
     \end{align*}
     So the product $\hat{\Cup}$ on $\widehat{H}^*(K,\hat{\delta}^{\xi};R)$ is graded commutative.     
 \end{proof}

 Then by the ring isomorphism $\Theta^*$ (see Theorem~\ref{Thm:Iso-Cup-Cohom}), we obtain the following corollary immediately.
 
 \begin{cor}\label{Cor:Graded-Comm-Cup}
  For a descending divisibly weighted simplicial complex
  $(K,\xi)$, the weighted cup product $\Cup$ on $H^*(K,\delta^{\xi};R)$ is graded commutative.
 \end{cor}
 
 It follows that for a weighted polyhedron $(X,\lambda)$,
 the weighted cup product $\Cup$ on the DW-cohomology
 $H^*_{DW}(X,\lambda;R)$ is graded commutative. \n
 
 Next, we give an example to show the difference between the DW-cohomology ring and the ordinary simplicial
 cohomology ring of a weighted polyhedron.
 
 \begin{figure}[h]
        \begin{equation*}
        \vcenter{
            \hbox{
                  \mbox{$\includegraphics[width=0.9\textwidth]{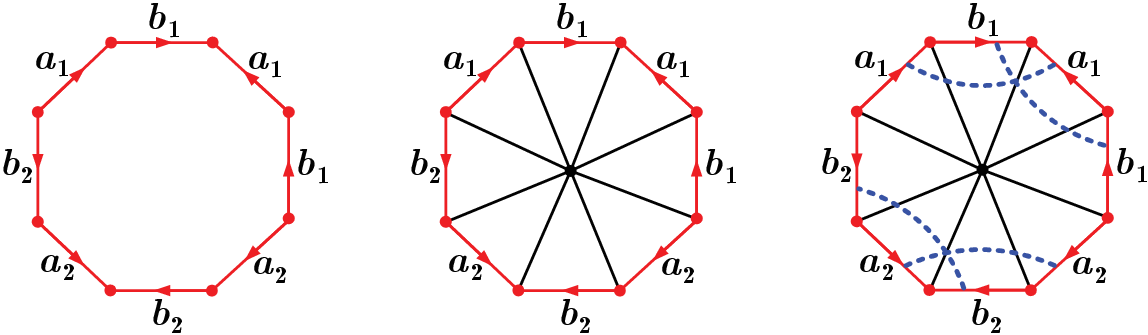}$}
                 }
           }
     \end{equation*}
   \caption{A pseudo-orbifold based on a genus $2$ closed orientable surface
       } \label{Fig:Example-Sigma-2}
   \end{figure}       
 
 \begin{exam} \label{Exam:DW-cohom-Surface}
   Let $\Sigma_2$ denote a closed orientable surface of genus $2$ which is glued from an octagon as shown in  Figure~\ref{Fig:Example-Sigma-2}. For any integer $k\geq 2$, let $\lambda_k$ be a weight function on $\Sigma_2$ which assigns $1$ to all the
   interior point of the octagon and assigns $k$ to
   those points from the edges on the boundary of the octagon. 
   Then the middle picture in Figure~\ref{Fig:Example-Sigma-2} is a divisibly weighted $\Delta$-triangulation of
   the pseudo-orbifold 
   $(\Sigma_2,\lambda_k)$, from which we can compute the 
   DW-cohomology group of $(\Sigma_2,\lambda_k)$:
   \[ H^p_{DW}(\Sigma_2,\lambda_k) \cong 
   H^p(\Sigma_2) \cong \begin{cases}
   \Z,  &  \text{$p=0,2$}; \\
   \Z\oplus\Z\oplus\Z\oplus\Z, & \text{$p=1$}; \\
   0,  &  \text{otherwise}.
 \end{cases} 
  \]

  The DW-cohomology ring $\big( H^*_{DW}(\Sigma_2,\lambda_k), \Cup \big)$, however, is not isomorphic as a graded ring to the ordinary simplicial cohomology ring
  $\big( H^*(\Sigma_2), \cup \big)$ of $\Sigma_2$. Indeed, let $\phi_1,\phi_2$, $\psi_1$ and $\psi_2$ denote the generators of
  $H^1_{DW}(\Sigma_2,\lambda_k)$ that are dual to
  $a_1,a_2, b_1$ and $b_2$, respectively. More specifically, $\phi_i$ (or $\psi_i$) has the value $1$ 
  on $a_i$ (or $b_i$) and the value $k$ on the two edges
   meeting the dotted arc that intersects $a_i$ (or $b_i$) (see the right picture in Figure~\ref{Fig:Example-Sigma-2}), and has the value $0$ on all other edges.   Let $\zeta$ denote the generator of $H^2_{DW}(\Sigma_2,\lambda_k)$ which has the value $1$ on all the $2$-simplices of $\Sigma_2$. Then we can directly compute from the $\Delta$-triangulation and the definition of $\Cup$ that the only nontrivial relations among the generators of $H^1_{DW}(\Sigma_2,\lambda_k)$ and $H^2_{DW}(\Sigma_2,\lambda_k)$ are:
  \[ \phi_1\Cup \psi_1 = \phi_2\Cup \psi_2 = k^2 \cdot \zeta. \] 
 So $\zeta$ is a multiplicative
  generator of $\big( H^*_{DW}(\Sigma_2,\lambda_k), \Cup \big)$ since $k\geq 2$.
  So there is no graded ring isomorphism between $\big( H^*_{DW}(\Sigma_2,\lambda_k), \Cup \big)$
 and $\big( H^*(\Sigma_2), \cup \big)$.      
 \end{exam}
 
 \n

\subsection{Weighted cap product}\ \n

 For a descending divisibly weighted simplicial complex $(K,\xi)$, we can also define a product 
 between elements of $H_*(K,\partial^{\xi};R)$
 and $H^*(K,\delta^{\xi};R)$ for any coefficient ring $R$. To avoid choosing a total ordering of the vertices of $K$, we give the definition of the product through ordered chains and cochains first.

 \begin{defi}[Weighted Cap Product]      
 	 \label{Def:Weighted-Cap-Prod}
 Let $(K,\xi)$ be a descending divisibly weighted simplicial complex. Define $R$-bilinear \emph{weighted cap product} 
  $$ \hat{\Cap}:  \widehat{C}_n(K;R)\times \widehat{C}^p(K;R) \rightarrow  \widehat{C}_{n-p}(K;R), \ 0\leq p\leq n $$ 
  by: for any cochain $\hat{\phi}\in \widehat{C}^p(K;R)$ and any ordered $n$-simplex $\langle v_0,\cdots,v_{n} \rangle$ of $K$, 
   \begin{equation*} 
   \langle v_0,\cdots,v_{n} \rangle  \,\hat{\Cap}\, \hat{\phi} := \frac{\xi\big(\langle v_0,\cdots, v_{p} \rangle\big)\xi\big(\langle v_p,\cdots, v_{n} \rangle\big)}{\xi\big(\langle v_0,\cdots, v_{n} \rangle\big)} \hat{\phi}\big(  \langle v_0,\cdots,v_{p}
   \rangle \big) \langle v_{p},\cdots,v_{n} \rangle. 
   \end{equation*}
 The coefficient on the right hand side is integral 
  because $\xi$ is a descending weight. 
 \end{defi}
 
 For any chain
 $\hat{\alpha} \in\widehat{C}_n(K;R)$,
 it is routine to check from the definitions that 
  \begin{equation} \label{Equ:Cap-Bound-Rel}
    \hat{\partial}^{\xi} \big( \hat{\alpha}\,\hat{\Cap}\,
    \hat{\phi} \big) = (-1)^p \big( ( \hat{\partial}^{\xi} \hat{\alpha})\,\hat{\Cap}\, \hat{\phi} - 
    \hat{\alpha}\,\hat{\Cap}\, \hat{\delta}^{\xi}\hat{\phi} \, \big).
  \end{equation}
 This implies that there is an induced product
 \begin{equation*} 
  \widehat{H}_n(K, \hat{\delta}^{\xi};R) \times \widehat{H}^p(K, \hat{\partial}^{\xi};R) \overset{\hat{\Cap}}{\longrightarrow} 
  \widehat{H}_{n-p}(K, \hat{\delta}^{\xi};R).
 \end{equation*} 
 Then by the isomorphisms $\Theta_*$ and $\Theta^*$
in Corollary~\ref{Cor:Iso-Hom-Order}, we obtain a product
  \begin{equation} 
  H_n(K, \delta^{\xi};R) \times H^p(K, \partial^{\xi};R) \overset{\Cap}{\longrightarrow} 
  H_{n-p}(K, \delta^{\xi};R).
 \end{equation} 
 We call ``$\Cap$'' the \emph{weighted cap product} of $(K,\xi)$.\n
  
 Moreover, the following lemma tells us that 
 the two products $\Cup$ and 
 $\Cap$ are compatible just as the ordinary
 cup product and cap product do.
 
 \begin{lem} \label{Lem:Cap-Cup-Compatible} 
 Let $(K,\xi)$ be a descending divisibly weighted simplicial complex. Then for any cochains $\hat{\phi}\in \widehat{C}^p(K;R)$,
 $\hat{\psi}\in \widehat{C}^q(K;R)$ and any chain
 $\hat{\alpha}\in\widehat{C}_n(K;R)$ with $p+q\leq n$, 
  $$ (\hat{\alpha} \,\hat{\Cap}\, \hat{\phi} ) \,\hat{\Cap}\, \hat{\psi} =  \hat{\alpha} \,\hat{\Cap}\,
  \big( \hat{\phi} \,\hat{\Cup}\, \hat{\psi} \big). $$
 \end{lem}
 \begin{proof}
  For any ordered $n$-simplex $\langle v_0,\cdots,v_{n} \rangle$ of $K$, we obtain 
 \begin{align*}
 &\ \big( \langle v_0,\cdots,v_{n} \rangle  \,\hat{\Cap}\, \hat{\phi} \big)  \,\hat{\Cap}\, \hat{\psi}\\
 =& \frac{\xi\big(\langle v_0,\cdots, v_{p} \rangle\big)\xi\big(\langle v_p,\cdots, v_{p+q} \rangle\big) \xi\big(\langle v_{p+q},\cdots, v_{n} \rangle\big)}{\xi\big(\langle v_0,\cdots, v_{n} \rangle\big)} \hat{\phi}\big(  \langle v_0,\cdots,v_{p} \big) 
 \hat{\psi}\big(  \langle v_p,\cdots,v_{p+q} \rangle \big) \langle v_{p+q},\cdots,v_{n} \rangle;
 \end{align*} 
 \begin{align*}
 &\  \big( \langle v_0,\cdots,v_{n} \rangle \big) \,\hat{\Cap}\, \big( \hat{\phi} \,\hat{\Cup}\, \hat{\psi} \big) \\
 =& \frac{\xi\big(\langle v_0,\cdots, v_{p+q} \rangle\big) \xi\big(\langle v_{p+q},\cdots, v_{n} \rangle\big)}{\xi\big(\langle v_0,\cdots, v_{n} \rangle\big)} 
 \big( \hat{\phi} \,\hat{\Cup}\, \hat{\psi} \big)\big(  \langle v_0,\cdots,v_{p+q} \rangle \big) 
  \langle v_{p+q},\cdots,v_{n} \rangle.
 \end{align*}
 Then it is easy see that $\big( \langle v_0,\cdots,v_{n} \rangle \big) \,\hat{\Cap}\, \big( \hat{\phi} \,\hat{\Cup}\, \hat{\psi} \big) = \big( \langle v_0,\cdots,v_{n} \rangle  \,\hat{\Cap}\, \hat{\phi} \big)  \,\hat{\Cap}\, \hat{\psi}$.
 \end{proof}
 
 By Lemma~\ref{Lem:Cap-Cup-Compatible} and the isomorphisms $\Theta_*$ and $\Theta^*$
in Corollary~\ref{Cor:Iso-Hom-Order}, we obtain the following corollary immediately.

\begin{cor} \label{Cor:Cap-Cup-Compatible}
Let $(K,\xi)$ be a descending divisibly weighted simplicial complex. Then for any cohomology classes $[\phi]\in H^p(K,\delta^{\xi};R)$, $[\psi]\in H^q(K,\delta^{\xi};R)$ and a homology class
 $[\alpha]\in H_n(K,\partial^{\xi};R)$ with $p+q\leq n$, 
  $$ ([\alpha] \Cap [\phi]) \Cap [\psi] =  [\alpha] \Cap
  ( [\phi] \Cup [\psi] ). $$
\end{cor}

By the above corollary, the product $\Cap$ induces a right $H^*(K,\delta^{\xi};R)$-module structure on $H_*(K,\partial^{\xi};R)$.
 \n
 
  By choosing a divisibly weighted triangulation of 
  a weighted polyhedron $(X,\lambda)$,  we can define
  the weighted cap product $\Cap$ of DW-homology and DW-cohomology $(X,\lambda)$, that is: for any $0\leq p\leq n$,
  \begin{equation*} 
  H^{DW}_n(X, \lambda;R) \times H^p_{DW}(X, \lambda;R) \overset{\Cap}{\longrightarrow} 
  H^{DW}_{n-p}(X, \lambda;R).
 \end{equation*} 
 \n
 
\begin{rem}
 We cannot define any $R$-bilinear product analogous to $\Cap$ between
 a AW-homology class and a DW-cohomology class (or between a DW-homology class and a AW-cohomology class). This is because  the relation in~\eqref{Equ:Cap-Bound-Rel} would fail when the chains and the cochains were of different types.
\end{rem}
  
 \vskip .4cm

 \section{Duality theorem of Weighted Homology and Cohomology} \label{Sec:Poincare-Duality}
 
 In this section, we prove the following theorem which is a direct generalization of Poincar\'e duality theorem in the category of weighted polyhedra. This is also an important reason why we study AW-homology and DW-homology at the same time.
 
   \begin{thm} \label{Thm:Poincare-Duality}
            Let $(X,\lambda)$ be a weighted polyhedra. If $X$ is an orientable compact homology $n$-manifold, then for all $0\leq p \leq n$, there are isomorphisms 
          $$
                H^p_{AW}(X,\lambda;G)\cong H_{n-p}^{DW}(X,\lambda;G),
           \ \
                H^p_{DW}(X,\lambda;G)\cong H_{n-p}^{AW}(X,\lambda;G).
           $$  
   If $X$ is non-orientable, the above isomorphisms are still valid for $G=\Z_2$.                   
     \end{thm} 
     
  \begin{rem}
    Even if the space $X$ is a closed orientable manifold, 
     $H^p_{AW}(X,\lambda)$ and $H_{n-p}^{AW}(X,\lambda)$ may not be isomorphic to each other, neither do $H^p_{DW}(X,\lambda;G)$ and $H_{n-p}^{DW}(X,\lambda)$. We can easily see this kind of phenomenon from Example~\ref{Exam:1-dim-pseudo-orbifold} and Example~\ref{Exam:Sphere-n-points} by a simple calculation via Universal Coefficient Theorem. 
  \end{rem}

   Recall a topological space $X$ is called a 
   \emph{homology $n$-manifold} if for each point $x$ of $X$, the local homology group 
   $$ H_i(X, X-x) =  \begin{cases}
      \Z,  &  \text{if $i=n$}; \\
       0,  &  \text{otherwise}.
 \end{cases} $$  
 Moreover, a compact triangulated homology $n$-manifold $X$ is called \emph{orientable} if it is possible to orient all the $n$-simplices $\sigma_i$ of $X$ so their sum $\gamma=\Sigma \sigma_i$ is a cycle. Such a cycle $\gamma$ is called an \emph{orientation cycle} for $X$.
 \n
 
 By the two versions of Universal Coefficient Theorem (see~\cite[Theorem 3.2]{Hatcher02} and~\cite[Theorem 56.1]{Munk84}), the two isomorphisms in Theorem~\ref{Thm:Poincare-Duality} imply each other.
 Moreover, if we have proven Theorem~\ref{Thm:Poincare-Duality} for 
 ascending type weighted-polyhedra, then the
 descending type case follows immediately by taking
 the adjoint (see~\eqref{Equ:adjoint-lambda}).
 So in the rest of this section, we only deal with ascending type weighted-polyhedra in our arguments.
\nn

Our proof of Theorem~\ref{Thm:Poincare-Duality} is analogous to the first proof of the Poincar\'e duality theorem in~\cite[\S 65]{Munk84}. The main idea is using the dual block decomposition of a homology $n$-manifold.

 \begin{defi}[Dual Block{~\cite[\S 64]{Munk84}}]\label{Def:Dual-Block}
        Let $K$ be a locally finite simplicial complex. Let $D(\sigma)$ denote the union of all the open simplices in $Sd(K)$ of the form 
        \begin{equation}
            [b_{\sigma_0},b_{\sigma_1},\cdots,b_{\sigma_k}]^{\circ}, \ \sigma =\sigma_0\subsetneq \dots \subsetneq \sigma_k.
        \end{equation}
        We call $D(\sigma)$ the \emph{dual block} to $\sigma$. We call the closure $\overline{D}(\sigma)$ of $D(\sigma)$ the \emph{closed block dual to} $\sigma$. 
        Let $\dot{D}(\sigma)=\overline{D}(\sigma)-D(\sigma)$. \n
   \begin{itemize}
   \item $\overline{D}(\sigma)$ consists of simplices of the form        
        \begin{equation*}
        [b_{\sigma_0},b_{\sigma_1},\cdots,b_{\sigma_k}], \ \sigma =\sigma_0\subsetneq \dots \subsetneq \sigma_k.
    \end{equation*}
    
    \item $\dot{D}(\sigma)$ consists of simplices of the form 
    \begin{equation*}
        [b_{\sigma_0},b_{\sigma_1},\cdots,b_{\sigma_k}], \ \sigma \subsetneq \sigma_0\subsetneq \dots \subsetneq \sigma_k.
    \end{equation*}
    \end{itemize}
    \end{defi}

   So by definition, $\overline{D}(\sigma)$ and $\dot{D}(\sigma)$ are both subcomplexes of $Sd(K)$.

     \begin{thm}[{\cite[Theorem 64.1]{Munk84}}]\label{Thm:DualBlock}
      Let $K$ be a locally finite simplicial complex that consists entirely of $n$-simplices and their faces. Let $\sigma$ be a $k$-simplex of $K$. Then:
      \begin{itemize}
      \item[(a)] The dual blocks are disjoint and their union is $|K|$.
      \n
      \item[(b)] $\overline{D}(\sigma)=b_{\sigma}*\dot{D}(\sigma)$ is a subcomplex of $Sd(K)$ of dimension $n-k$.\n
      
      \item[(c)] If $H_i(X, X-b_{\sigma}) \cong \mathbb{Z}$ for $i=n$ and vanishes otherwise, then $(\overline{D}(\sigma), \dot{D}(\sigma))$ has the homology of an $n-k$ cell modulo its boundary.
     \end{itemize}
    \end{thm}

  \begin{defi}[Dual Block Decomposition] \label{Def:Dual-Block-Decomp}
   Let $K$ be a locally finite simplicial complex that is a homology $n$-manifold. The collection of dual blocks $\{D(\sigma)\}_{\sigma\in K}$ are called the 
   \emph{dual block decomposition} of $|K|$. Let $K^*_p$ denote the union of all the dual blocks of dimension at most $p$, and called the \emph{dual $p$-skeleton} of $|K|$. The dual cellular chain complex $\mathcal{D}_*(K)$ of $|K|$ is defined by 
$$
\mathcal{D}_p(K)=H_p\left(K^*_p, K^*_{p-1}\right) .
$$
Its boundary operator $d$ is given by the homomorphism in the exact sequence of the triple $\left(K^*_p, K^*_{p-1}, K^*_{p-2}\right)$. 
 It is well known that the homology of 
 $(\mathcal{D}_*(K),d)$ is isomorphic to the simplicial (or singular) homology of $K$. 
 \end{defi}

 In the following, we will prove parallel results to Theorem~\ref{Thm:DualBlock} in the context of weighted simplicial complexes. But when we have a weight function on $K$, we need to be careful about which weight function 
 we should put on $Sd(K)$ that can induce appropriate
 weights on the dual blocks. 
 \n

   Let $(K,\xi)$ be a divisibly weighted simplicial complex. Then on $Sd(K)$, we can define three meaningful weight functions $Sd(\xi)$, $Sd(\widehat{\xi})$ and
 $\widehat{Sd (\widehat{\xi})}$.  For brevity, let 
 $$\widetilde{Sd}(\xi) := \widehat{Sd (\widehat{\xi})}.$$
 In general, $Sd(\xi)$ and $\widetilde{Sd}(\xi)$ are not equal on $Sd(K)$ (see Figure~\ref{Fig:Subdivision-Dual} for example).
 
  \begin{figure}[h]
        \begin{equation*}
        \vcenter{
            \hbox{
                  \mbox{$\includegraphics[width=0.98\textwidth]{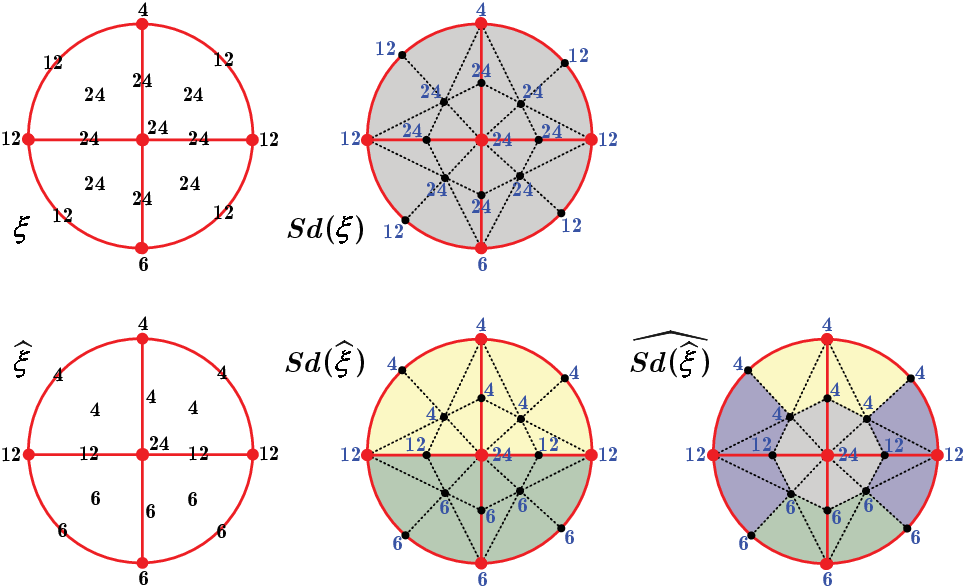}$}
                 }
           }
     \end{equation*}
   \caption{Induced weights on the barycentric subdivision of a divisibly weighted simplicial complex where here $\xi$ is ascending}\label{Fig:Subdivision-Dual}
   \end{figure}  
 
 \begin{lem} \label{Lem:Wt-Hom-Iso-Tilde-Sd}
  Let $(K,\xi)$ be a divisibly weighted simplicial complex. Then for every $p\geq 0$, there is an isomorphism of the weighted homology groups 
  $$ H_p\big(Sd(K),\partial^{Sd(\xi)} \big) \cong H_p\big(Sd(K), \partial^{\widetilde{Sd}(\xi)}\big).$$
   \end{lem}
\begin{proof}
 It follows from Theorem~\ref{Thm:Sd-Homology-Isom-1} 
 that there are isomorphisms:
    \begin{equation} \label{Equ:Isomorphisms}
    H_* \big(Sd(K),\partial^{Sd(\xi)} \big) \cong    H_*(K,\xi) = H_*(K, \widehat{\widehat{\xi}}) \cong H_*\big(Sd(K), \partial^{\widetilde{Sd}(\xi)} \big).
    \end{equation}
    Here we use the fact that the inversion of the inversion of $\xi$ goes back to $\xi$. 
\end{proof}

 Later in our proof of Theorem~\ref{Thm:Poincare-Duality}, we will mainly use 
  the weight function $\widetilde{Sd}(\xi)$ on
  $Sd(K)$. So let us first prove some properties of
  $\widetilde{Sd}(\xi)$.

   \begin{lem}\label{Lem:boundaryOfBarySimp}
        Let $(K,\xi)$ be a divisibly weighted simplicial complex where the weight $\xi$ is ascending. 
        If 
        $[b_{\sigma_0},b_{\sigma_1},\ldots,b_{\sigma_k}]$, $\sigma_0\subsetneq\sigma_1\subsetneq \dots \subsetneq \sigma_k$,
        is a generator of $C_p(Sd(K),\widetilde{Sd}(\xi))$, then 
        \begin{equation}
            \partial^{\widetilde{Sd}(\xi)}[b_{\sigma_0},b_{\sigma_1}, \ldots, b_{\sigma_k}]= \left(\frac{\widehat{\xi}(\sigma_0)}{\widehat{\xi}(\sigma_1)}-1\right) [b_{\sigma_1},\dots,b_{\sigma_k}] + \partial[b_{\sigma_0},b_{\sigma_1},\ldots,b_{\sigma_k}],
        \end{equation}
        where $\widehat{\xi}(\sigma_1)= \widetilde{Sd}(\xi)([b_{\sigma_1}, \ldots, b_{\sigma_k}])$.
    \end{lem}
    \begin{proof}
       By our assumption,  $\widehat{\xi}$ is a descending divisible weight on $K$. So
       $$ \widehat{\xi}(\sigma_k)\mid \cdots \mid \widehat{\xi}(\sigma_1) \mid \widehat{\xi}(\sigma_0). $$
           Then by Definition~\ref{Def:Bary-Subdiv-Weighted}, $Sd(\widehat{\xi})$ is a descending divisible weight on $Sd(K)$ with 
      $$Sd(\widehat{\xi})(b_{\sigma}) = \widehat{\xi}(\sigma),\ \text{for every simplex}\ \sigma \ \text{of}\ K.$$
       So we have
       \begin{equation} \label{Equ:Sd-hat-b}
        Sd(\widehat{\xi})(b_{\sigma_k}) \mid \cdots
        \mid Sd(\widehat{\xi})(b_{\sigma_1}) \mid Sd(\widehat{\xi})(b_{\sigma_0}). 
        \end{equation}
       Then since $\widetilde{Sd}(\xi)=\widehat{Sd(\widehat{\xi})}$ is an ascending divisible weight on $Sd(K)$ which agrees with 
       $Sd(\widehat{\xi})$ on each vertex of $Sd(K)$, we deduce from~\eqref{Equ:Sd-hat-b} that
       \begin{equation} \label{Equ:tilde-Sd}
        \widetilde{Sd}(\xi)\big([b_{\sigma_0}, b_{\sigma_1}, \ldots, b_{\sigma_k}]\big) = Sd(\widehat{\xi})(b_{\sigma_0}) =\widehat{\xi}(\sigma_0).
        \end{equation}
        Then we can compute
        \begin{equation*}
            \begin{aligned}
             \partial^{\widetilde{Sd}(\xi)}[b_{\sigma_0},
             b_{\sigma_1}, \ldots, b_{\sigma_k}]=&
             \frac{\widetilde{Sd}(\xi)\big([b_{\sigma_0}, b_{\sigma_1}, \ldots, b_{\sigma_k}]\big)}{ \widetilde{Sd}(\xi)\big([b_{\sigma_1}, \ldots, b_{\sigma_k}]\big)}                   
       [b_{\sigma_1},\ldots,b_{\sigma_k}] \\
             +& \ \sum_{i=1}^{k}(-1)^i
 \frac{\widetilde{Sd}(\xi)\big([b_{\sigma_0}, b_{\sigma_1}, \ldots, b_{\sigma_k}]\big)}{ \widetilde{Sd}(\xi)\big([b_{\sigma_0}, \ldots, \widehat{b}_{\sigma_i},\ldots, b_{\sigma_k}]\big)}
  [b_{\sigma_0},\ldots,\widehat{b}_{\sigma_i},\dots,b_{\sigma_k}] \\
     \overset{\eqref{Equ:tilde-Sd}}{=} &\ \frac{\widehat{\xi}(\sigma_0)}{\widehat{\xi}(\sigma_1)}[b_{\sigma_1},\dots,b_{\sigma_k}] + \sum_{i=1}^{k}(-1)^i \frac{\widehat{\xi}(\sigma_0)}{\widehat{\xi}(\sigma_0)}[b_{\sigma_0},\ldots,\widehat{b}_{\sigma_i},\dots,b_{\sigma_k}]\\ 
                = &\left(\frac{\widehat{\xi}(\sigma_0)}{\widehat{\xi}(\sigma_1)}-1\right) [b_{\sigma_1},\dots,b_{\sigma_k}] + \sum_{i=0}^{k}(-1)^i \frac{\widehat{\xi}(\sigma_0)}{\widehat{\xi}(\sigma_0)}[b_{\sigma_0},\dots,\widehat{b}_{\sigma_i},\dots,b_{\sigma_k}] \\
                =& \left(\frac{\widehat{\xi}(\sigma_0)}{\widehat{\xi}(\sigma_1)}-1\right)[b_{\sigma_1},\dots,b_{\sigma_k}] + \partial [b_{\sigma_0}, b_{\sigma_1},\ldots,b_{\sigma_k}].
            \end{aligned}
        \end{equation*}
        This proves the lemma.
     \end{proof}

   \begin{lem} \label{Lem:Homology-Dual-Block}
        Let $(K,\xi)$ be a divisibly weighted simplicial complex of ascending type and $|K|$ is a homology $n$-manifold. Then for any $k$-simplex $\sigma$ of $K$, 
        \begin{itemize}
        \item[(a)]
       $
            H_i\big(\overline{D}(\sigma), \dot{D}(\sigma), \partial^{\widetilde{Sd}(\xi)}\big)=
        \begin{cases}
            \Z, & \text{if $i=n-k$}; \\ 
            0, &  \text{if $i\neq n-k$}.
        \end{cases}
        $\n
    \item[(b)] The generator of $H_{n-k}\big(\overline{D}(\sigma), \dot{D}(\sigma), \partial^{\widetilde{Sd}(\xi)}\big)$ can be taken to be
     the sum of all the $(n-k)$-simplices of $\overline{D}(\sigma)$ with compatible orientations, denoted by $[\overline{D}(\sigma)]$.\n
     \item[(c)] For each $(n-k)$-simplex $s$ in $\overline{D}(\sigma)$, we have $\widetilde{Sd}(\xi)(s)=\widehat{\xi}(\sigma)$. 
     \end{itemize}
    \end{lem}
    \begin{proof}
     Consider the short exact sequence of simplicial chain complexes:
        \begin{equation*}
            0 \to  C_i(\dot{D}(\sigma)) \stackrel{i}{\to} C_i(\overline{D}(\sigma)) \stackrel{p}{\to} C_i(\overline{D}(\sigma),\dot{D}(\sigma)) \to  0 .
        \end{equation*}
        
        Let $\beta$ be a relative $i$-chain of $C_i(\overline{D}(\sigma),\dot{D}(\sigma))$ and $\alpha\in C_i(\overline{D}(\sigma)) $ be 
        a preimage of $\beta$ under the quotient homomorphism $p$. By Theorem~\ref{Thm:DualBlock}, we can write $\alpha$ as the join of $b_{\sigma}$ with a
         $(i-1)$-chain $c =\sum_j n_jc_j$ where
         each $c_j$ is a simplex of $\dot{D}(\sigma)$. 
         Using the notation from~\cite{Munk84}, we write $\alpha$ as
        \begin{equation*}
            \alpha=[b_{\sigma},c]=\sum_j n_j [b_{\sigma},c_j].
        \end{equation*} 
        Note that each vertex of $c_j$ is the barycenter 
        of some simplex  of $K$ that contains $\sigma$. 
        So from~\eqref{Equ:tilde-Sd}, we can deduce that
      \begin{equation} \label{Equ:Sd-c-j}
       \widetilde{Sd}(\xi)(c_j)\mid \widehat{\xi}(b_{\sigma})=\widehat{\xi}(\sigma) = \widetilde{Sd}(\xi)([b_{\sigma},c_j]).
       \end{equation}
        Then by Lemma~\ref{Lem:boundaryOfBarySimp}, 
         \begin{equation*}
            \begin{aligned}
                \partial^{\widetilde{Sd}(\xi)} [b_{\sigma},c] =& \sum_j n_j \partial^{\widetilde{Sd}(\xi)} [b_{\sigma},c_j] \\
                = &\sum_j n_j \left( \Big(\frac{\widehat{\xi} (\sigma)}{\widetilde{Sd}(\xi) (c_j)}-1\Big) c_j + \partial [b_{\sigma},c_j] \right) \\
                = & \left( \sum_j n_j  \Big(\frac{\widehat{\xi} (\sigma)}{\widetilde{Sd}(\xi) (c_j)}-1\Big) c_j \right) + \partial [b_{\sigma},c].
            \end{aligned}
        \end{equation*}
        Then we obtain
    \[ \partial^{\widetilde{Sd}(\xi)} [b_{\sigma},c] - \partial [b_{\sigma},c] = \sum_j n_j  \Big(\frac{\widehat{\xi} (\sigma)}{\widetilde{Sd}(\xi) (c_j)}-1\Big) c_j \in C_{j-1}(\dot{D}(\sigma)).\]
        This implies that
        $\partial^{\widetilde{Sd}(\xi)} [b_{\sigma},c]$ and $\partial [b_{\sigma},c]$ represent the same relative $(i-1)$-chain of $C_{i-1}(\overline{D}(\sigma),\dot{D}(\sigma))$, i.e. 
       $
            \partial^{\widetilde{Sd}(\xi)} \beta = \partial \beta \in C_{i-1}(\overline{D}(\sigma),\dot{D}(\sigma))$.
      Therefore, we have the following commutative diagram 
      \begin{equation*}
            \begin{tikzcd}
                \cdots \ \ C_i(\overline{D}(\sigma),\dot{D}(\sigma)) \arrow[r,"\partial^{\widetilde{Sd}(\xi)}"] \arrow[d, shift left=4mm, "="]& C_{i-1}(\overline{D}(\sigma),\dot{D}(\sigma)) \arrow[d,shift right=4mm,"="] \ \ \cdots\\ 
                \cdots \ \ C_i(\overline{D}(\sigma),\dot{D}(\sigma)) \arrow[r,"\partial"] & C_{i-1}(\overline{D}(\sigma),\dot{D}(\sigma)) \ \ \cdots
            \end{tikzcd}
        \end{equation*}
      In other words, the identity map of $C_*\big(\overline{D}(\sigma),\dot{D}(\sigma)\big)$ induces a chain map between $\big(C_*(\overline{D}(\sigma),\dot{D}(\sigma)),
   \partial^{\widetilde{Sd}(\xi)} \big)$ and 
   $\big( C_*(\overline{D}(\sigma),\dot{D}(\sigma)),\partial\big)$. So we have an isomorphism
        \begin{equation*}
            H_i\big(\overline{D}(\sigma),\dot{D}(\sigma),\partial^{\widetilde{Sd}(\xi)}\big) \cong H_i(\overline{D}(\sigma),\dot{D}(\sigma)), \ i\geq 0.
        \end{equation*}
        Then the claims in (a) and (b) follow from Theorem~\ref{Thm:DualBlock}(c). 
        \n
      Moreover, since each $(n-k)$-simplex $s$ in $\overline{D}(\sigma)$ is the join of $b_{\sigma}$ with some simplex in $\dot{D}(\sigma)$, the results in~\eqref{Equ:tilde-Sd} and~\eqref{Equ:Sd-c-j} 
      imply that $\widetilde{Sd}(\xi)(s)=\widehat{\xi}(b_{\sigma})=\widehat{\xi}(\sigma)$. This finishes the proof.
    \end{proof}
    
    \begin{rem}
     The homology groups $H_*\big(\overline{D}(\sigma), \dot{D}(\sigma),\partial^{Sd(\xi)}\big)$ may not satisfy Lemma~\ref{Lem:Homology-Dual-Block}(a) as
      $ H_*\big(\overline{D}(\sigma),\dot{D}(\sigma),\partial^{\widetilde{Sd}(\xi)}\big)$ do.
      This is the reason why in the proof of Theorem~\ref{Thm:Poincare-Duality}, we use $\widetilde{Sd}(\xi)$ instead of $Sd(\xi)$ as the weighted function on $Sd(K)$.
    \end{rem}

     \begin{lem}\label{Lem:Homology-Dual-Skeleton}
        Let $(K,\xi)$ be a divisibly weighted simplicial complex of ascending type and $|K|$ is a homology $n$-manifold. Then for each $0\leq p \leq n$, 
        \begin{equation*}
            H_i\big(K^*_p,K^*_{p-1},\partial^{\widetilde{Sd}(\xi)}\big)= 
            \begin{cases}
                0, &  \text{if $i\neq p$}; \\ 
                \Z^{l_p}, & \text{if $i=p$},
            \end{cases}
        \end{equation*}
        where $l_p$ is the number of $(n-p)$-simplices in $K$. Moreover, a free basis of $H_p\big(K^*_p,K^*_{p-1},\partial^{\widetilde{Sd}(\xi)}\big)$ is $\{ [\overline{D}(\sigma)] \,;\, \sigma \ \text{is any $(n-p)$-simplex
        of}\ K \}$.
    \end{lem}
     \begin{proof}
        By definition, 
        \[K^*_{p}=\bigcup_{\dim (\sigma) = n-p} \overline{D}(\sigma) ,\ \ \  K^*_{p-1}=\bigcup_{\dim (\tau) = n-p+1} \overline{D}(\tau).\]
       An $(n-p)$-simplex $\sigma$ is a face of an 
        $(n-p+1)$-simplex $\tau$ if and only if
          $\overline{D}(\tau)\subset \dot{D}(\sigma)$. 
          This implies 
       \begin{equation*}
            (K^*_p,K^*_{p-1})=\bigcup_{\dim (\sigma) =n-p} \big(\overline{D}(\sigma),\dot{D}(\sigma)\big).
        \end{equation*}
      So we have
        \[
            C_*\big(K_p^*,K_{p-1}^*,\partial^{\widetilde{Sd}(\xi)}\big) \cong \bigoplus_{\dim (\sigma) =n-p} C_*\big(\overline{D}(\sigma),\dot{D}(\sigma),\partial^{\widetilde{Sd}(\xi)}\big),
      \]
      \[
            H_* \big(  K^*_p,K^*_{p-1},\partial^{\widetilde{Sd}(\xi)} \big)\cong \bigoplus_{\dim (\sigma) =n-p}  H_*\big(\overline{D}(\sigma),\dot{D}(\sigma),\partial^{\widetilde{Sd}(\xi)} \big).
      \]
        Then the lemma follows from Lemma~\ref{Lem:Homology-Dual-Block}.
        \end{proof}
        
  From the above lemma, it is easy to prove the following theorem.   
  
  \begin{thm} \label{Thm:dual-cell}
        Let $(K,\xi)$ be a divisibly weighted simplicial complex of ascending type and $|K|$ is a homology $n$-manifold. Then for every $p\geq 0$,
        \begin{itemize}
        \item[(a)] The inclusion $K^*_p \hookrightarrow K^*_{p+1}$ induces group isomorphisms
        \begin{equation*}
            H_i\big(K^*_p,\partial^{\widetilde{Sd}(\xi)}\big) \cong H_i\big(K^*_{p+1},\partial^{\widetilde{Sd}(\xi)}\big), \ i \neq p, p+1.
        \end{equation*}
        
        \item[(b)]  The inclusion $K^*_p \hookrightarrow Sd(K)$ induces group isomorphisms
        \begin{equation*}
            H_i\big(K^*_p,\partial^{\widetilde{Sd}(\xi)}\big) \cong H_i\big(Sd(K),\partial^{\widetilde{Sd}(\xi)}\big), \  i\leq p-1.
        \end{equation*}
        
        \item[(c)] When $i \geq p+1$, we have 
        $H_i\big(K_p^*,\partial^{\widetilde{Sd}(\xi)}\big)=0$.
        \end{itemize}
    \end{thm}    
     
       For each $p\geq 0$, denote $H_p\big(K^*_p,K^*_{p-1},\partial^{\widetilde{Sd}(\xi)}\big)$ by $\mathcal{D}_p\big(K,\widetilde{Sd}(\xi)\big)$.\n

        Similarly to the ordinary cellular chain complex, we can construct a differential 
       $$d^{\xi}: \mathcal{D}_p\big(K,\widetilde{Sd}(\xi)\big) \rightarrow
       \mathcal{D}_{p-1}\big(K,\widetilde{Sd}(\xi)\big), \ \forall p\geq 0, $$ 
        such that the homology of the chain complex $\big(\mathcal{D}_*(K,\widetilde{Sd}(\xi)),d^{\xi} \big)$ is isomorphic to the homology of $\big(C_*(Sd(K)),\partial^{\widetilde{Sd}(\xi)}\big)$.   
   Indeed, consider the following diagram 
    \begin{equation*}
        \begin{tikzcd}
            H_p\big(K^*_p,K^*_{p-1},\partial^{\widetilde{Sd}(\xi)}\big) \arrow[r,"\partial_p^{\widetilde{Sd}(\xi)}"] & H_{p-1}\big(K^*_{p-1},\partial^{\widetilde{Sd}(\xi)}\big) \arrow[r] \arrow[d,"="]& H_{p-1}\big(K^*_{p}, \partial^{\widetilde{Sd}(\xi)}\big) \\ 
            H_{p-1}\big(K^*_{p-2},\partial^{\widetilde{Sd}(\xi)}\big) \arrow[r] & H_{p-1}\big(K^*_{p-1},\partial^{\widetilde{Sd}(\xi)}\big) \arrow[r,"i_{p-1}"] & H_{p-1}\big(K^*_{p-1},K^*_{p-2},\partial^{\widetilde{Sd}(\xi)}\big) 
        \end{tikzcd}
    \end{equation*}
    where the two horizontal rows are the long exact sequences of homology groups of $(K_p^*,K_{p-1}^*)$ and
    $(K_{p-1}^*,K_{p-2}^*)$. Then define the differential
    $d^{\xi}$ to be
   \begin{equation} \label{Equ:d-xi}
     d^{\xi}_p=i_{p-1}\circ \partial_p^{\widetilde{Sd}(\xi)} : \mathcal{D}_p\big(K,\widetilde{Sd}(\xi)\big) \to \mathcal{D}_{p-1}\big(K,\widetilde{Sd}(\xi)\big).
     \end{equation}

   \begin{lem} \label{Lem:Sd-tilde-Sd-Isom}
  For every $p\geq 0$, there is an isomorphism 
  $$H_p \big(\mathcal{D}_*\big(K,\widetilde{Sd}(\xi)\big),d^{\xi} \big)\cong H_p\big(Sd(K),\partial^{\widetilde{Sd}(\xi)}\big).$$
   \end{lem}
   \begin{proof}
   The argument is parallel to that for
   the isomorphism between the singular homology and the cellular homology of a CW-complex (see~\cite[Theorem 39.4]{Munk84} and~\cite[Theorem 39.5]{Munk84}). We have the following diagram based on the results in Theorem~\ref{Thm:dual-cell}.   
        
   {\scriptsize  \begin{equation*}\label{dualdf}
            \begin{tikzcd}[column sep=tiny]
            & & & 0 &        \\
               H_{p}\big(K_{p-1}^*,\partial^{\widetilde{Sd}(\xi)}\big)=0 \arrow[dr]& & H_{p} \big(K^*_{p+1},\partial^{\widetilde{Sd}(\xi)}\big)\arrow[ur, shorten=2.5mm] \\ 
              & H_{p} \big(K^*_{p},\partial^{\widetilde{Sd}(\xi)}\big) \arrow[dr,"i_p"] \arrow[ur]\\
              H_{p+1} \big(K^*_{p+1},K^*_{p},\partial^{\widetilde{Sd}(\xi)}\big) \arrow[rr,"d^{\xi}_{p+1}",dashrightarrow] \arrow[ur,"\partial^{\widetilde{Sd}(\xi)}_{p+1}"]& &H_{p} \big(K^*_{p}, K^*_{p-1},\partial^{\widetilde{Sd}(\xi)}\big) \arrow[rr,"d^{\xi}_p",dashrightarrow] \arrow[dr,"\partial_p^{\widetilde{Sd}(\xi)}"']& & H_{p-1} \big(K^*_{p-1}, K^*_{p-2},\partial^{\widetilde{Sd}(\xi)}\big)\\
              & & & H_{p-1} \big(K^*_{p-1},\partial^{\widetilde{Sd}(\xi)}\big) \arrow[ur,"i_{p-1}"'] \\
               & & H_{p-1}\big(K^*_{p-2},\partial^{\widetilde{Sd}(\xi)}\big)=0 \arrow[ur] & & 
              \end{tikzcd}.
        \end{equation*}}
          
           It is easy to derive that
   $$H_p\big(Sd(K),\partial^{\widetilde{Sd}(\xi)}\big)
   \cong H_{p} \big(K^*_{p+1},\partial^{\widetilde{Sd}(\xi)}\big) \cong H_{p} \big(K^*_{p}, K^*_{p-1},\partial^{\widetilde{Sd}(\xi)}\big) \big\slash
   \mathrm{Im}\big(\partial^{\widetilde{Sd}(\xi)}_{p+1}\big).$$
   
  Moreover, since $i_p$ and $i_{p-1}$ in the diagram are
   both injective, it is easy to see that
      $H_p\big(Sd(K),\partial^{\widetilde{Sd}(\xi)}\big) \cong  \ker(d^{\xi}_p)\slash \mathrm{Im}(d^{\xi}_{p+1}) =H_p\big(\mathcal{D}_*(K,\widetilde{Sd}(\xi)),d^{\xi} \big)$.  
   \end{proof}

   \begin{cor} \label{Cor:Dual-Isom-AW-Homol}
   If $(K,\xi)$ is a divisibly weighted triangulation of 
   an ascending type weighted polyhedron $(X,\lambda)$, then there is an isomorphism
    $$H_p\big(\mathcal{D}_*\big(K,\widetilde{Sd}(\xi)\big),d^{\xi} \big)\cong H^{AW}_p(X,\lambda), \ \forall p\geq 0.$$
    \end{cor}
  \begin{proof}  
  By Theorem~\ref{Thm:Sd-Homology-Isom-1} and Lemma~\ref{Lem:Wt-Hom-Iso-Tilde-Sd},
  $$H^{AW}_p(X,\lambda)= H_p(K,\partial^{\xi})\cong H_p\big(Sd(K),\partial^{Sd(\xi)} \big) \cong H_p\big(Sd(K),\partial^{\widetilde{Sd}(\xi)}\big).$$ 
  Then the corollary follows from Lemma~\ref{Lem:Sd-tilde-Sd-Isom}.
\end{proof}
   
 Next, we write an explicit formula for $ 
 d^{\xi}_p :\mathcal{D}_p\big(K,\widetilde{Sd}(\xi)\big) \rightarrow \mathcal{D}_{p-1}\big(K,\widetilde{Sd}(\xi)\big)$.
 By Lemma~\ref{Lem:Homology-Dual-Skeleton}, 
 it amounts to determine $d^{\xi}_p([\overline{D}(\sigma)])$ for each $(n-p)$-simplex $\sigma$ of $K$.
 We can compute
  directly from the definition~\eqref{Equ:d-xi} that
  \begin{equation} \label{Equ:d-xi-formula}
    d^{\xi}_p([\overline{D}(\sigma)]) = \sum_i \varepsilon_i \frac{\widehat{\xi}(\sigma)}{\widehat{\xi}(\tau_i)} [\overline{D}(\tau_i)] ,
    \end{equation}
  where the sum ranges over all the $(n-p+1)$-simplex
  $\tau_i$ of $K$ that contains $\sigma$ as a face. Here,
   $\varepsilon_i= \pm 1$ is the coefficient of $\sigma$  in the expression for $\partial \tau_i$, and the weight factor
   $\frac{\widehat{\xi}(\sigma)}{\widehat{\xi}(\tau_i)} $ follows from Lemma~\ref{Lem:Homology-Dual-Block}(c).
  \n
  
  Now, we are prepared to prove Theorem~\ref{Thm:Poincare-Duality}.
  
  \nn
  
 \noindent \textbf{\textit{Proof of Theorem~\ref{Thm:Poincare-Duality}}.}
  As we argued at the beginning of this section, it is sufficient for us to prove the isomorphism $H^p_{DW}(X,\lambda;G)\cong H_{n-p}^{AW}(X,\lambda;G)$
 for any ascending type weighted polyhedron $(X,\lambda)$. In addition, we will omit all the coefficients $G$
 in our argument below.\n
 
  First, choose $(K,\xi)$ to be a divisibly weighted triangulation of $(X,\lambda)$. By our assumption that $X$ is orientable, we can orient the $n$-simplices of $K$ so their sum is an $n$-cycle. Orient the other simplices of $K$ arbitrarily.  
  The oriented $p$-simplices $\sigma$ of $K$ form a basis of the simplicial chain group $C_p(X)$; their duals $\sigma^*$ form a basis of the cochain group $C^p(X)$. 
  Moreover, the orientation of a $p$-simplex $\sigma$ determines 
  the orientation of its dual block $D(\sigma)$, and hence a generator $[\overline{D}(\sigma)]$ of
 $H_{n-p}(\overline{D}(\sigma), \dot{D}(\sigma))$.\n
 
 By the proof of (ordinary) Poincar\'e duality
 theorem for the oriented homology $n$-manifold $X$ (see~\cite[Theorem 65.1]{Munk84}),
  one can use the dual block decomposition  $\{D(\sigma)\}_{\sigma\in K}$ of $X$ (see Definition~\ref{Def:Dual-Block-Decomp}) to obtain a commutative diagram 
   \begin{equation*} 
            \begin{tikzcd}
                \dots \arrow[r] & C^p(K) \arrow[d,"\phi"] \arrow[r,"\delta"] & C^{p+1}(K) \arrow[d,"\phi"] \arrow[r] & \dots \\ 
                \dots \arrow[r] & \mathcal{D}_{n-p}(K) \arrow[r,"d"] & \mathcal{D}_{n-p-1}(K) \arrow[r] & \dots
            \end{tikzcd}
        \end{equation*}
   where $\phi$ maps each $p$-simplex $\sigma^*\in C^p(X)$ to $[\overline{D}(\sigma)]\in H_{n-p}(\overline{D}(\sigma), \dot{D}(\sigma))$. This implies that the simplicial homology and cohomology groups of $K$ satisfies the duality relation $H^p(K)\cong H_{n-p}(\mathcal{D}(K),d) \cong H_{n-p}(K)$ for all $p\geq 0$.\n

   Here in the presence of a divisible weight $\xi$ on $K$, Lemma~\ref{Lem:Homology-Dual-Block}(b) tells us that $[\overline{D}(\sigma)]$ is also a generator of $H_{n-p}\big(\overline{D}(\sigma), \dot{D}(\sigma),\partial^{\widetilde{Sd}(\xi)}\big)$. Moreover,   
   Lemma~\ref{Lem:Homology-Dual-Skeleton} says that
  all the $[\overline{D}(\sigma)]$ with $\dim(\sigma)=p$ form a free basis of $\mathcal{D}_{n-p}\big(K,\widetilde{Sd}(\xi)\big) $.\n
  
   \begin{itemize}
   \item[\textbf{Claim:}] The
   map $\phi : \sigma^* \mapsto [\overline{D}(\sigma)]$ makes the following diagram commute as well: 
   \end{itemize}
   \begin{equation*}
            \begin{tikzcd}
                \dots \arrow[r] & C^p(K,\delta^{\widehat{\xi}}) \arrow[d,"\phi"] \arrow[r,"\delta^{\widehat{\xi}}"] & C^{p+1}(K,\delta^{\widehat{\xi}}) \arrow[d,"\phi"] \arrow[r] & \dots \\ 
                \dots \arrow[r] & \mathcal{D}_{n-p}\big(K,\widetilde{Sd}(\xi)\big) \arrow[r,"d^{\xi}"] & \mathcal{D}_{n-p-1}\big(K,\widetilde{Sd}(\xi)\big) \arrow[r] & \dots
            \end{tikzcd}
        \end{equation*}
   where $\widehat{\xi}$ is the inversion of $\xi$.     
  Indeed, if $\delta(\sigma) = \sum_i \varepsilon_i \tau_i$, then by definition
  $$\delta^{\widehat{\xi}}(\sigma^*) = \sum_i \varepsilon_i \frac{\widehat{\xi}(\sigma)}{\widehat{\xi}(\tau_i)} \tau^*_i, \ \ 
  \phi\circ \delta^{\widehat{\xi}}(\sigma^*)= 
   \sum_i \varepsilon_i \frac{\widehat{\xi}(\sigma)}{\widehat{\xi}(\tau_i)} [\overline{D}(\tau_i)].$$ 
   So we have 
  \begin{equation*} 
   \begin{aligned}
   d^{\xi}\circ \phi (\sigma^*) =  d^{\xi}([\overline{D}(\sigma)]) \overset{\eqref{Equ:d-xi-formula}}{=} 
   \sum_i \varepsilon_i \frac{\widehat{\xi}(\sigma)}{\widehat{\xi}(\tau_i)} [\overline{D}(\tau_i)] =  \phi\circ \delta^{\widehat{\xi}}(\sigma^*).
   \end{aligned}  
  \end{equation*}
  The commutativity of the above diagram implies that there is an isomorphism
  $$ H^p(K,\delta^{\widehat{\xi}})\cong H_{n-p}\big(\mathcal{D}_{n-p}(K,\widetilde{Sd}(\xi)),d^{\xi}\big), \ \forall p\geq 0.$$
  
  Then by Corollary~\ref{Cor:Dual-Isom-AW-Homol}, 
  $ H^p_{DW}(X,\lambda) = H^p(K,\delta^{\widehat{\xi}})\cong H^{AW}_{n-p}(X,\lambda)$ for all $p\geq 0$.
  This finishes the proof when $X$ is an orientable homology $n$-manifold. \n
  
  If $X$ is non-orientable but with the coefficient group $G=\Z_2$, the proof is parallel. \qed

 \begin{rem}
   It is shown in Takeuchi and Yokoyama~\cite{TakYor12} that under certain orientation assumption, the t-singular homology and
   the ws-singular cohomology of an orbifold are dual to each other. But since the relation between
  our weighted homology theory and Takeuchi and Yokoyama's t-singular homology theory is not known, it is not clear whether the duality between the t-singular homology and
  the ws-singular cohomology is equivalent to the duality between AW-homology and DW-cohomology in our Theorem~\ref{Thm:Poincare-Duality}.
 \end{rem}
   
    \vskip .3cm
      
   \section*{Acknowledgment}
  During the process of writing this paper, 
Lisu Wu is partially supported by  National Natural Science Foundation of China (Grant No.\,12201359) and  Natural Science Foundation of Shandong Province, China (Grant No.\,ZR2022QA028), and Li Yu is partially supported by
  National Natural Science Foundation of China (Grant No.\,11871266). 
 
 \vskip .4cm


\begin{thebibliography}{99}

\bibitem{AdemLeiRuan07} 
A.~Adem, J.~Leida and Y.~Ruan,  {\em Orbifolds and stringy topology}. Cambridge, New York, 2007.

\bibitem{BGB22}
F.~Baccini, F.~Geraci and G.~Bianconi,
\emph{Weighted simplicial complexes and their representation power of higher-order network data and topology}. Phys. Rev. E106 (2022), no.~\textbf{3}, Paper No. 034319.

\bibitem{Bredon72}
  G.~E.~Bredon, {\em Introduction to Compact Transformation Groups}. Pure and Applied Mathematics,
vol. 46, New York: Academic Press, 1972.

\bibitem{BuchPanov04}
  V.~M.~Buchstaber and T.~E.~Panov, \emph{Combinatorics of simplicial cell complexes, and torus actions}. (Russian) Tr. Mat. Inst. Steklova 247 (2004), Geom. Topol. i Teor. Mnozh., 41--58; translation in
Proc. Steklov Inst. Math. 247 (2004), no.~\textbf{4}, 33--49.


\bibitem{ChenRuan04} 
  W.~Chen and Y.~Ruan, {\em A new cohomology theory of orbifold}. Comm. Math. Phys. 248 (2004), no.~\textbf{1}, 1--31.
  
  
\bibitem{Choi12}
 S.~Choi, \emph{Geometric structures on $2$-orbifolds: exploration of discrete symmetry}.
MSJ Memoirs, 27. Mathematical Society of Japan, Tokyo, 2012.  

\bibitem{DavKirk01}
  J.~F.~Davis and P.~Kirk, \emph{Lecture notes in algebraic topology}. 
Graduate Studies in Mathematics, 35. American Mathematical Society, Providence, RI, 2001.   
  

\bibitem{Daw90} 
  R.~Dawson, {\em Homology of weighted simplicial complexes}. 
Cahiers Topologie G\'eom. Diff\'erentielle Cat\'eg. 31 (1990), no.~\textbf{3}, 229--243.

\bibitem{Glodin08}
 R.~Goldin, \emph{Weighted hyperprojective spaces and homotopy invariance in orbifold cohomology}. Toric topology, 99--110. Contemp. Math.~\textbf{460}
American Mathematical Society, Providence, RI, 2008.

\bibitem{Gor78}
 R.~M.~Goresky, \emph{Triangulation of stratified objects}. Proc. Amer. Math. Soc. 72 (1978), 193--200 .
 

\bibitem{Hatcher02}
  A.~Hatcher, {\em Algebraic topology}. Cambridge University Press, Cambridge, 2002. 
  
 
\bibitem{LupUriXic08}  
  E.~Lupercio, B.~Uribe and M.~Xicotencatl, \emph{Orbifold string topology}, Geom. Topol. 12 (2008), no.~\textbf{4}, 2203--2247.
  
\bibitem{MoePro99}
 I.~Moerdijk and D.~A.~Pronk, \emph{Simplicial cohomology of orbifolds}. Indag. Math. (N.S.) 10 (1999), no.~\textbf{2}, 269--293.
 
\bibitem{Munk84}
 J.~R.~Munkres, \emph{Elements of algebraic topology}. Addison-Wesley Publishing Company, 1984. 


\bibitem{RenWuWu18}
 S.~Q.~Ren, C.~Y.~Wu and J.~Wu, \emph{Weighted persistent homology}. Rocky Mountain J. Math. 48 (2018), no.~\textbf{8}, 2661--2687. 
 
\bibitem{RenWuWu21}
S.~Q.~Ren, C.~Y.~Wu and J.~Wu,
\emph{Computational tools in weighted persistent homology}.  Chinese Ann. Math. Ser. B42(2021), no.~\textbf{2}, 237--258. 

\bibitem{Sa56}  
 I.~Satake, \emph{On a generalization of the notion of manifold}. Proc. Nat. Acad. Sci. U.S.A. 42 (1956), 359--363.

\bibitem{Sa57}  
 I.~Satake, \emph{The Gauss-Bonnet Theorem for V-manifolds}. J. Math. Soc. Japan 9 (1957), 464--492. 
 
\bibitem{Scott83}
  P.~Scott, \emph{The Geometries of 3-Manifolds}.  Bull. London Math. Soc., 15 (1983), 401--487. 

\bibitem{Thurs77}
 W.~P.~Thurston, \emph{Geometry and topology of three-manifolds}. lecture notes available at \qquad \url{https://library.slmath.org/books/gt3m/}.

\bibitem{TakYok06} 
 Y.~Takeuchi and M.~Yokoyama,
{\em The t-singular homology of orbifolds}.
Topology Appl. 153 (2006), no.~\textbf{11}, 1722--1758.

\bibitem{TakYok07}
   Y.~Takeuchi and M.~Yokoyama, \emph{The ws-singular cohomology of orbifolds}. Topology Appl. 154 (2007), 1664--1678.
   
\bibitem{TakYor12}
Y.~Takeuchi and M.~Yokoyama, \emph{The duality theorem between the t-singular homology and the ws-singular cohomology of orbifolds}.
Topology Appl. 159 (2012), no.~\textbf{5}, 1369--1379.  

\bibitem{Veron84}
  A.~Verona, \emph{
Stratified mappings--structure and triangulability}.
Lecture Notes in Mathematics, 1102. Springer-Verlag, Berlin, 1984.

\bibitem{WuRenWuXia20}
  C.~Y.~Wu, S.~Q.~Ren, J.~Wu and K.~L.~Xia, \emph{Discrete Morse theory for weighted simplicial complexes}. Topology Appl. 270 (2020), 107038.
  
\bibitem{Yas18}
 A.~Yasuhiko, 
{\em Loop homology of some global quotient orbifolds}.
Algebr. Geom. Topol. 18 (2018), no.~\textbf{1}, 613--633.

\bibitem{Yok07}
M.~Yokoyama, \emph{Notes on the s-singular and t-singular homologies of orbifolds}. Kyushu J. Math. 61 (2007), 137--154.
  
\end{thebibliography}
\end{document}